\documentclass[12pt, reqno, twoside, letterpaper]{amsart}


\usepackage{amsmath,amssymb,amsbsy,amsfonts,amsthm,latexsym,
            amsopn,amstext,amsxtra,euscript,amscd,stmaryrd,mathrsfs,
            cite,array}

\usepackage{graphicx}

\numberwithin{equation}{section}


\usepackage{color}
\usepackage[usenames,dvipsnames,svgnames,table]{xcolor}



\def\eps{\varepsilon}
\def\mand{\qquad\mbox{and}\qquad}
\def\fl#1{\left\lfloor#1\right\rfloor}

\def\({\left(}
\def\){\right)}

\newcommand{\e}{\ensuremath{\mathbf{e}}}


\newcommand{\cB}{\ensuremath{\mathcal{B}}}

\newcommand{\cF}{\ensuremath{\mathcal{F}}}

\newcommand{\cL}{\ensuremath{\mathcal{L}}}
\newcommand{\cM}{\ensuremath{\mathcal{M}}}
\newcommand{\cN}{\ensuremath{\mathcal{N}}}

\newcommand{\cP}{\ensuremath{\mathcal{P}}}

\newcommand{\cS}{\ensuremath{\mathcal{S}}}

\newcommand{\cT}{\ensuremath{\mathcal{T}}}




\newcommand{\RR}{\ensuremath{\mathbb{R}}}
\newcommand{\ZZ}{\ensuremath{\mathbb{Z}}}


\usepackage{color}
\usepackage{hyperref}
\hypersetup{
    pdftoolbar=true,
    pdfmenubar=false,
    pdffitwindow=false,
    pdfstartview={FitH},
    pdftitle={1}, 
    pdfauthor={Victor Z. Guo}, 
    pdfcreator={Victor Z. Guo}, 
    pdfproducer={Victor Z. Guo}, 
    pdfnewwindow=true,
    colorlinks=true,
    linkcolor={black},
    citecolor={black},
    filecolor={black},
    urlcolor={black},
}


\linespread{1.15}



\usepackage[margin=2.5cm]{geometry}


\usepackage{amsthm}
\newtheoremstyle{customthm}
{1em}                    
{1em}                    
{\itshape}               
{}                       
{\scshape}               
{.}                      
{5pt plus 1pt minus 1pt} 
{}                       

\newtheoremstyle{customrem}
{1em}                    
{1em}                    
{}                       
{}                       
{\scshape}               
{.}                      
{5pt plus 1pt minus 1pt} 
{}                       

\theoremstyle{customthm}

\newtheorem{X}{X}[section]
\newtheorem{theorem}[X]{Theorem}

\newtheorem{lemma}[X]{Lemma}

\newtheorem{proposition}[X]{Proposition}

\theoremstyle{customrem}

\usepackage{etoolbox}
\AtEndEnvironment{remark}{\null\hfill\qedsymbol}

\AtEndEnvironment{definition}{\null\hfill\qedsymbol}


\renewcommand{\le}{\ensuremath{\leqslant}}
\renewcommand{\ge}{\ensuremath{\geqslant}}



\renewcommand{\pod}[1]{\mathchoice
  {\allowbreak \if@display \mkern 5mu\else \mkern 5mu\fi (#1)}
  {\allowbreak \if@display \mkern 5mu\else \mkern 5mu\fi (#1)}
  {\mkern4mu(#1)}
  {\mkern4mu(#1)}
}






\DeclareSymbolFont{EUEX}{U}{euex}{m}{n}

\DeclareSymbolFont{euexlargesymbols}{U}{euex}{m}{n}
\DeclareMathSymbol{\intop}{\mathop}{euexlargesymbols}{"52}
     \def\int{\intop\nolimits}

\DeclareSymbolFont{euexsymbols}     {U}{euex}{m}{n}
\DeclareMathSymbol{\smallint}{\mathop}{euexsymbols}{"52}

\allowdisplaybreaks

\setcounter{tocdepth}{1}


\title[Exponential sums with polynomials]{Exponential sums with polynomials and their applications to primes in sparse sets}

\author[Lingyu Guo]{Lingyu Guo}

\address{School of Mathematics and Statistics, Xi'an Jiaotong University, Xi'an, Shaanxi, China.}

\email{guo.lingyu@foxmail.com}

\author[Victor Z. Guo]{Victor Zhenyu Guo}

\address{School of Mathematics and Statistics, Xi'an Jiaotong University, Xi'an, Shaanxi, China.}

\email{vzguo@foxmail.com; guozyv@xjtu.edu.cn}

\author[Mengyao Jing]{Mengyao Jing}

\address{Research Center for Number Theory and Its Applications, School of Mathematics, Northwest University, Xi'an Shaanxi, China.}

\email{myjing@nwu.edu.cn}

\date{\today}

\begin{document}

\begin{abstract}
Exponential sums with monomials are highly related to many interesting problems in number theory and well studied by many literatures. In this paper, we consider the exponential sums with polynomials and prove a new upper bound. As an application, we study the Piatetski-Shapiro sequence of the form $(\lfloor n^c \rfloor)$ where $c > 1$ is not an integer. We improve the admissible range of the asymptotic formula for primes in the intersection of Piatetski-Shapiro sequences. We also study the iterated Piatetski-Shapiro sequence and prove an asymptotic formula for the prime counting function.
\end{abstract}

\maketitle

\begin{quote}
\textbf{MSC Numbers:} 11B83; 11L20.
\end{quote}

\begin{quote}
\textbf{Keywords:} exponential sum; Piatetski-Shapiro sequence; prime; lattice.
\end{quote}



\tableofcontents


\section{Introduction}
\label{sec:introduction}

The estimation of the exponential sum of the following form is a core part of many interesting problems in number theory:
$$
\cS_0 = \sum_{m \sim M}  \sum_{n \sim N} a_{m} b_n \e(f(m,n))
$$
with the notations $\e(t) = e^{2\pi it}$, where $N, M$ are positive numbers, $a_{m}, b_n$ are complex numbers, $f$ is a smooth function, $m \sim M$ means that $M < m \le 2M$.

The monomial case when
$$
f(m,n) =    E\frac{m^\beta n^\gamma}{M^\beta N^\gamma}\ \ \ \ (E\in\mathbb{R})
$$
was well studied; see Bombieri and Iwaniec \cite{BI1986}, Fouvry and Iwaniec \cite{FI1989},  and so on. We mention that a remarkable result is by Robert and Sargos \cite{RS2006} who proved the bound of $\cS_0$ via double large sieves and a spacing problem solved by an optimal bound of the number of solutions of a diophantine inequality.

In this paper, we consider a more general case when the function $f$ is a polynomial, which leads the exponential sum to be
$$
\cS= \sum_{m\sim M}\mathop{\sum_{n\sim N}}_{mn\sim X}a_mb_n
\mathbf{e}\left(\sum_{1\leqslant j\leqslant d}
\frac{E_jm^{\beta_j}n^{\gamma_j}}{M^{\beta_j}N^{\gamma_j}}\right).
$$
Unfortunately the method for the monomial case in \cite{BI1986, FI1989, RS2006} does not work in the polynomial case, since double large sieve provides a bound worse than the trivial bound. A traditional method is to apply the van der Corput's inequality or a higher derivative test for exponential sums which work on any smooth function; see Chapter 2 in \cite{GrahamK1991}. With this idea, Baker \cite{Baker2014} proved a bound of a special case of $\cS$ when $\beta_j = \gamma_j$ for all $j$.

We prove the bound of $\cS$ which is also better than Baker's bound in the special case. A key idea is that we recognize the method of exponent pairs (Lemma  \ref{lem: exponent pair method}) can be applied to this problem.

\begin{theorem}\label{theorem: 1}
Let $M,N,X$ be positive real numbers such that $X=MN$.
Let $E_j,\beta_j,\gamma_j\ (j=1,\cdots,d)$ be real numbers.
For every exponent pair $(\kappa,\lambda)$ and any sufficiently small real
$\varepsilon>0$ we have
\begin{align*}
X^{-3\varepsilon}\cS\ll M^{1/2}X^{1/2}+\Delta^{-1/2}X
&+M^{(\lambda-\kappa-1)/(2\kappa+2)}X\\
&+\Delta^{\kappa/(2\kappa+2)}M^{(\lambda-1)/(2\kappa+2)}X^{(2+\kappa)/(2\kappa+2)},
\end{align*}
where $|a_m|\ll X^{\varepsilon}$, $|b_n|\ll X^{\varepsilon}$ and
$$
\Delta=\Bigg|\sum_{1\leqslant j\leqslant d}\gamma_j\cdots(\gamma_j-s+1)E_j\Bigg|.
$$
\end{theorem}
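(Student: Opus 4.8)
The plan is to reduce the bilinear sum $\cS$ to a one-parameter exponential sum in $n$ with a controlled phase, to which the method of exponent pairs (Lemma~\ref{lem: exponent pair method}) applies. First I would detach the coefficients $a_m$ by Cauchy--Schwarz in the $m$-aspect: writing $\cS=\sum_{m\sim M}a_m U_m$ with $U_m=\sum_{n\sim N,\ mn\sim X}b_n\e(f(m,n))$ and using $|a_m|\ll X^{\varepsilon}$, one obtains $|\cS|^2\ll X^{2\varepsilon}M\sum_{m\sim M}|U_m|^2$. Opening the square and writing the two $n$-variables as $n$ and $n+h$ with $|h|<N$, the diagonal $h=0$ contributes $\ll X^{1+2\varepsilon}$ — here one uses $MN=X$ together with the fact that for each fixed $m\sim M$ there are $\ll N$ admissible $n$ — and, fed back through the Cauchy--Schwarz inequality, this produces the first term $M^{1/2}X^{1/2}$ of the theorem. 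Everything else comes from the off-diagonal terms.

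For the off-diagonal I would apply van der Corput's $A$-process in the variable $n$ with a free parameter $H\in[1,N]$, bounding $|U_m|^2$ by $X^{2\varepsilon}$ times $N^2/H$ plus $\tfrac{N}{H}$ times a sum over $1\le h<H$ of the shifted sums $\sum_n\e\big(\Phi_{m,h}(n)\big)$, where $\Phi_{m,h}(n)=f(m,n)-f(m,n+h)$ and the factors $b_n\overline{b_{n+h}}$, being $\ll X^{2\varepsilon}$, are pulled out. The crucial observation — which is exactly what makes the exponent-pair machinery applicable here — is that the $n$-derivatives of $f(m,\cdot)$ are governed in size by $\Delta$: on the relevant range one has $|\partial_n^{\,r}f(m,n)|\asymp\Delta N^{-r}$, \emph{uniformly in $m\sim M$}, because the factors $(m/M)^{\beta_j}$ are bounded and $\Delta$ is precisely the weighted $s$-th-derivative coefficient $\big|\sum_j\gamma_j\cdots(\gamma_j-s+1)E_j\big|$; consequently $\Phi_{m,h}$ behaves like a model phase whose derivative is $\asymp|h|\Delta N^{-2}$ up to bounded factors. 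Applying an arbitrary exponent pair $(\kappa,\lambda)$ then gives $\sum_n\e(\Phi_{m,h}(n))\ll(|h|\Delta N^{-2})^{\kappa}N^{\lambda}+N^2(|h|\Delta)^{-1}$; summing over $h<H$ and over $m\sim M$, inserting into the Cauchy--Schwarz bound, extracting the square root and writing $N=X/M$ produces the remaining three terms. The optimal choice of $H$ balances $N^2/H$ against the exponent-pair main term and is what manufactures the exponent $2\kappa+2$ in the denominators, while the pieces $N^2(|h|\Delta)^{-1}$ at the endpoint $H=N$ give the term $\Delta^{-1/2}X$.

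I expect two points to require real work. The first is the uniform control $|\partial_n^{\,s}f(m,n)|\asymp\Delta N^{-s}$: a priori the $d$ monomials, having different exponents $\beta_j$, could conspire to cancel in some higher derivative for particular $m\sim M$, so one must check — or impose as a mild non-degeneracy hypothesis tied to the choice of $s$ — that $\Delta$ genuinely records the order of magnitude of the relevant $n$-derivative throughout the range, and that the higher derivatives are comparable so that a \emph{general} exponent pair (not merely a single derivative test) may be invoked. The second is the optimisation itself: one has three competing quantities ($N^2/H$, the exponent-pair main term, and $N^2(|h|\Delta)^{-1}$), must treat the endpoint $H=N$ separately, and then re-express the outcome in the variables $M$, $X$, $\Delta$ so that the exponents land exactly on $(\lambda-\kappa-1)/(2\kappa+2)$ and $\kappa/(2\kappa+2)$; since the stated last two terms are sharper than a crude $\sum_m|U_m|^2\le M\max_m|U_m|^2$ would yield, one most likely has to exploit the averaging over $m$ once more inside the off-diagonal rather than summing trivially, and this is where the bookkeeping is most delicate, even though it introduces no new idea.
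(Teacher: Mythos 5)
Your skeleton matches the paper's up to a point: Cauchy--Schwarz in $m$ to remove $a_m$, the Weyl--van der Corput $A$-process in the $n$-variable, Lemma~\ref{lem: exponent pair method} on the differenced sum, and a final balancing of the differencing parameter. The genuine gap is in which variable the exponent pair is applied to. You estimate the shifted sum $\sum_n\e(\Phi_{m,h}(n))$ over $n$, using $|\Phi_{m,h}^{(s)}|\asymp |h|\Delta N^{-s-1}$. The paper instead interchanges the order of summation after the $A$-process and applies the exponent pair to the \emph{innermost sum over $m$}, whose phase after differencing in $n$ is
$$
g(m)=\sum_{1\leqslant j\leqslant d}\frac{E_j\left((n+q)^{\gamma_j}-n^{\gamma_j}\right)}{M^{\beta_j}N^{\gamma_j}}\,m^{\beta_j},
\qquad |g^{(s)}(m)|\asymp \Delta|q|X^{-1}M^{1-s}.
$$
This is not a cosmetic choice. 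Your route gives, for the off-diagonal contribution to $|\cS|^2$, the term $M^{2}N^{1+\lambda-2\kappa}\Delta^{\kappa}H^{\kappa}$, whereas the paper's gives $\Delta^{\kappa}X^{2-\kappa}M^{\lambda-1}Q^{\kappa}$; their ratio is $(M/N)^{1+\kappa-\lambda}$. Since the diagonal term $X^{2}/H$ and the term $\Delta^{-1}X^{3}M^{-1}H^{-1}$ are identical in both treatments, after Lemma~\ref{lemmea: balance} and the square root each of your last two terms exceeds the corresponding term of the theorem by exactly the factor $(M/N)^{(1+\kappa-\lambda)/(2\kappa+2)}$. As $\lambda\leqslant 1$ for any exponent pair, this factor is a positive power of $X$ whenever $M>N$, which is precisely the regime in which the theorem is deployed (Proposition~\ref{theorem: Type II} takes $M$ up to $X^{1-2\sigma-10\varepsilon}$, so $N$ is small and the exponents $(\lambda-\kappa-1)/(2\kappa+2)\leqslant 0$ and $(\lambda-1)/(2\kappa+2)\leqslant 0$ must sit on $M$, not on $N$). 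So the proposal as written proves a strictly weaker statement and cannot be patched by ``exploiting the averaging over $m$ once more,'' which is where you stopped; the missing idea is that the differenced factor $(n+q)^{\gamma_j}-n^{\gamma_j}\approx\gamma_j q n^{\gamma_j-1}$ should be absorbed into the coefficients and the one-variable exponential sum in $m$, with exponents $\beta_j$, is the object to which Lemma~\ref{lem: exponent pair method} is applied. The first two terms $M^{1/2}X^{1/2}$ and $\Delta^{-1/2}X$ come out the same either way.

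A secondary remark: your worry about the uniformity of $|\partial_n^{s}f(m,n)|\asymp\Delta N^{-s}$ over $m\sim M$ (possible cancellation among the $d$ monomials away from the corner $(M,N)$) is legitimate, but it is not a defect specific to your plan --- the paper's own proof asserts the analogous unproved $\asymp$ for the $m$-derivatives (and moreover silently switches between defining $\Delta$ with the $\gamma_j$ and with the $\beta_j$), so this is a looseness in the statement of the theorem itself rather than the decisive gap in your argument.
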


\subsection{Applications of the exponential sums}
An application is related to the Piatetski-Shapiro sequences defined as
$$
\mathcal{N}^c=(\lfloor n^c\rfloor)_{n=1}^{\infty},
$$
where $\lfloor x\rfloor$ is the largest integer not exceeding $x$ with real $c>1$ and $c\not\in\mathbb{N}$. Piatetski-Shapiro \cite{Shapiro1953} gave the following prime number theorem:
\begin{align}\label{eq: Piatetski-Shapiro}
\#\left\{p\leqslant X: p\in \mathcal{N}^c\right\}
=(1+o(1))\frac{X^{1/c}}{c\log X}\ \ \ \hbox{as}\ X\rightarrow\infty
\end{align}
for $1<c<12/11$. This result is considered as an approximation of the well-known conjecture that there exist infinitely many primes of the form $n^2+1$.

To date, the best known admission range for $c$ is $1<c<2817/2426$  due to  Rivat and Sargos \cite{RivatS2001}. If one only considers (\ref{eq: Piatetski-Shapiro}) with a lower bound of the right order of magnitude instead of an asymptotic formula, a large range $1<c< 243/205$ was proved by Rivat and Wu \cite{RivatW2001}. The readers are referred to \cite{Guo2021} for a survey regarding the distribution of Piatetski-Shapiro primes.

An analogue problem of (\ref{eq: Piatetski-Shapiro}) is to count primes in the intersection of multiple Piatetski-Shapiro sequences, which is
$$
\mathcal{N}(X,\boldsymbol{c})=\#\left\{p\leqslant X: p=\lfloor n_1^{c_1}\rfloor
=\cdots=\lfloor n_d^{c_d}\rfloor \hbox{\ for some integers}\ n_1,\cdots,n_d \right\},
$$
where $c_1,\cdots,c_d>1$ and $\boldsymbol{c}=(c_1,\cdots,c_d)$. Denote
$$
\gamma_j=\frac{1}{c_j},\ \ \delta_j=1-\gamma_j,\ \ \sigma=\delta_1+\cdots+\delta_d.
$$
We expect that for some $\sigma_d$ such that $\sigma<\sigma_d$, it follows that
\begin{align}\label{eq: Baker}
	\mathcal{N}(X,\boldsymbol{c})=(1+o(1))\frac{\gamma_1\cdots\gamma_d}
	{1-\sigma}\frac{X^{1-\sigma}}{\log X}.
\end{align}
Leitmann \cite{Leitmann1982} proved that $\sigma_2 = 1/28$ as a first result.  Sirota \cite{Sirota1983} extends this result to $\sigma_2 = 1/16$. Moreover, Sirota also proved an asymptotic formula for these primes in arithmetic progressions when $d=2$.  Zhai \cite{Zhai1999} proved that $\sigma_d = d/(4d^2+2)$ for $d \ge 3$ and claimed a stronger result for $d=2$ but it has not been published. Baker \cite{Baker2014} proved the following best result with
$$
\sigma_2=\sigma_3=\frac{1}{12},\ \ \sigma_4=\frac{1}{16},\ \ \sigma_5=\frac{1}{18},\ \
\sigma_d=\frac{1}{3d}\ (d\geqslant6).
$$

In this paper, we further improve Baker's results as an application of our Theorem \ref{theorem: 1}. Our main result is as follows.

\begin{theorem}\label{theorem: 2}
Under the above notations, suppose that $\sigma<\sigma_d$, where
$$
\sigma_d=\frac{290}{3297}\approx\frac{1}{11.36}\ (2\leqslant d\leqslant 10)\ \ \ \hbox{and}\ \ \
\sigma_d=\frac{1}{d+1}\ (d\geqslant 11),
$$
we have
$$
\mathcal{N}(X,\boldsymbol{c})=(1+o(1))\frac{\gamma_1\cdots\gamma_d}
{1-\sigma}\frac{X^{1-\sigma}}{\log X}.
$$
\end{theorem}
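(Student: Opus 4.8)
\emph{Setup and main term.}
The plan is to follow the classical route for primes in sparse sequences --- reduce $\cN(X,\boldsymbol c)$ to an exponential sum over primes carrying a genuine polynomial phase, and then apply Theorem \ref{theorem: 1} --- exploiting that in this geometry the parameter $\Delta$ of Theorem \ref{theorem: 1} is essentially as large as $X$. We may assume $c_1,\dots,c_d$ pairwise distinct (merging repeated sequences lowers $d$), so $\gamma_1,\dots,\gamma_d\in(0,1)$ are distinct; set $\delta_{\min}=\min_j\delta_j>0$. For $p$ large the interval $[\,p^{\gamma_j},(p+1)^{\gamma_j})$ has length $<1$, hence contains at most one integer and $\ind{p\in\cN^{c_j}}=\lceil(p+1)^{\gamma_j}\rceil-\lceil p^{\gamma_j}\rceil=\mu_j(p)+\rho_j(p)$, with $\mu_j(p)=(p+1)^{\gamma_j}-p^{\gamma_j}$, $\rho_j(p)=\psi(-(p+1)^{\gamma_j})-\psi(-p^{\gamma_j})$ and $\psi(t)=\{t\}-1/2$. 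Thus $\cN(X,\boldsymbol c)=\sum_{p\le X}\prod_{j=1}^d(\mu_j(p)+\rho_j(p))+O(1)$; expanding over subsets $S\subseteq\{1,\dots,d\}$ and using $\mu_j(p)=\gamma_j p^{-\delta_j}+O(p^{-\delta_j-1})$, the term $S=\emptyset$ is $(\gamma_1\cdots\gamma_d)\sum_{p\le X}p^{-\sigma}+O(1)$, which by partial summation against the prime number theorem equals $(1+o(1))\frac{\gamma_1\cdots\gamma_d}{1-\sigma}\frac{X^{1-\sigma}}{\log X}$ --- the asserted main term. It remains to prove that for each nonempty $S$,
$$
\cR_S:=\sum_{n\le X}\Lambda(n)\Bigl(\prod_{j\notin S}\mu_j(n)\Bigr)\Bigl(\prod_{j\in S}\rho_j(n)\Bigr)\ \ll\ X^{1-\sigma-\eps}
$$
for some fixed $\eps>0$ (the passage $\sum_{p\le X}\mapsto\sum_{n\le X}\Lambda(n)/\log n$ and the prime powers cost only $O(X^{1/2+\eps})$).

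\emph{Linearisation.}
Apply Vaaler's approximation to each $\psi$ with cut-off $H=X^{\sigma+\eps_0}$ ($\eps_0$ small): $\psi(t)=\sum_{1\le|h|\le H}c_h\e(ht)+O(\Psi_H(t))$, $|c_h|\ll1/|h|$, $\Psi_H\ge0$ a Fejér-type kernel with $\int_0^1\Psi_H\ll1/H$ and Fourier support in $|h|\le H$. In the expansion of $\prod_{j\in S}\rho_j$, any term with an error factor $\Psi_H(\pm(n+\cdot)^{\gamma_j})$ is estimated by expanding that kernel: its frequency-zero part gives $\ll H^{-1}\sum_{n\le X}\Lambda(n)\prod_{j\notin S}\mu_j(n)\ll H^{-1}X^{1-\sigma+\sigma_S}\le X^{1-\sigma-\eps_0}$, where $\sigma_S=\sum_{j\in S}\delta_j\le\sigma$ (this is exactly why $H$ is taken that large), while its nonzero frequencies feed back into sums of the type below with a shorter phase. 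For the principal part, writing each $\e(-h_j(n+1)^{\gamma_j})-\e(-h_jn^{\gamma_j})$ as a sum of two pure exponentials and expanding, $\cR_S$ becomes --- up to the negligible terms above and a harmless factor $(\log X)^{O(d)}$, using $\sum_{1\le|h|\le H}|c_h|\ll\log H$ for the $\boldsymbol h$-sum --- a combination of
$$
\sum_{n\le X}\Lambda(n)\Bigl(\prod_{j\notin S}\mu_j(n)\Bigr)\e\Bigl(-\sum_{j\in S}h_j(n+\epsilon_j)^{\gamma_j}\Bigr),\qquad 1\le|h_j|\le H,\ \epsilon_j\in\{0,1\}.
$$
Since $(n+1)^{\gamma_j}=n^{\gamma_j}+\gamma_jn^{\gamma_j-1}+O(n^{\gamma_j-2})$ and $Hn^{\gamma_j-2}\to0$, the shift $\epsilon_j=1$ only appends further monomials $n^{\gamma_j-1},n^{\gamma_j-2},\dots$ to the phase, so it suffices to bound $\sum_{n\le X}\Lambda(n)w(n)\e(\sum_j E_j'n^{\gamma_j'})$ with $w$ smooth of size $n^{-(\sigma-\sigma_S)}$, the $\gamma_j'$ distinct reals in $(0,1)$, and $|E_j'|\ll H$; by partial summation it is enough to treat the weight-free version, and we need a power saving $X^{1-\rho}$ with $\rho>\sigma_S$ --- and, taking $S$ maximal, with $\rho>\sigma$.

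\emph{The polynomial prime sum.}
Decompose $\Lambda$ on $[1,X]$ by Heath-Brown's identity into $O((\log X)^{O(1)})$ Type I and Type II sums; after dyadic splitting these are $\sum_{m\sim M}\sum_{\ell\sim L}a_mb_\ell\,\e(\sum_j E_j'(m\ell)^{\gamma_j'})$ with $ML\asymp X$, $|a_m|,|b_\ell|\ll X^\eps$, and for Type I one factor smooth on an interval. Cast this in the shape of Theorem \ref{theorem: 1} by taking both exponents of the $j$-th term equal to $\gamma_j'$ and $E_j=E_j'X^{\gamma_j'}$; since the powers $X^{\gamma_j'}$ have pairwise distinct orders of magnitude and $|E_j'|\ge1$ prevents their combination from collapsing,
$$
\Delta=\Bigl|\sum_j\gamma_j'(\gamma_j'-1)\cdots(\gamma_j'-s+1)E_j\Bigr|\asymp\max_j|E_j'|\,X^{\gamma_j'}\asymp X^{1-\delta_{\min}}
$$
is essentially maximal. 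Feeding this $\Delta$ into Theorem \ref{theorem: 1} with a well-chosen exponent pair $(\kappa,\lambda)$ (from the $A$- and $B$-processes of Lemma \ref{lem: exponent pair method}), and handling the Type I / short-variable ranges by a direct exponent-pair bound on the long variable, gives the Type II sums $\ll X^{1-\rho}$ with a saving $\rho=\rho(\kappa,\lambda;\sigma)>0$ that the $\boldsymbol h$-sum does not spoil. Optimising $H=X^{\sigma+\eps_0}$, the exponent pair, and the Heath-Brown ranges so that \emph{every} Type I and Type II piece is below $X^{1-\sigma}$ yields $\rho>\sigma$ exactly when $\sigma<290/3297$; because the dominant term of the phase is always the single power $n^{\gamma_{\max}}$, the $d\le10$ additional monomials and Heath-Brown factors cost nothing essential and the threshold is $d$-independent there. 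For $d\ge11$ the identity introduces enough factors (hence short enough variables) that only weaker estimates are available in those ranges, and balancing them against the fixed $H$ forces the cruder $\sigma<1/(d+1)$.

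\emph{Main obstacle.}
The crux is this last optimisation: keeping \emph{all} Type I and Type II contributions below $X^{1-\sigma}$ at once while $\Delta$ is pinned near $X$, and choosing the exponent pair and Heath-Brown parameters so the worst surviving term fixes the range --- this is what determines the constant $290/3297$ and the transition at $d=11$. The expansion over the $2^d$ subsets $S$, the $d$-fold $\boldsymbol h$-sum, and the Vaaler error estimate are routine, and the choice $H=X^{\sigma+\eps_0}$ is forced by, and consistent with, the truncation bound.
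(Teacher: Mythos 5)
Your reduction to the exponential sum $\sum_n\Lambda(n)\e(h_1n^{\gamma_1}+\cdots+h_dn^{\gamma_d})$ is sound in outline (you reach it via Vaaler's approximation where the paper uses the Harman--Baker lattice method of Theorem \ref{theorem: polar} and Proposition \ref{theorem: characterization}; either route works and the resulting $\boldsymbol h$-ranges are comparable). The fatal step is your claim that
$$
\Delta=\Bigl|\sum_j\gamma_j'(\gamma_j'-1)\cdots(\gamma_j'-s+1)E_j\Bigr|\asymp\max_j|E_j'|X^{\gamma_j'}\asymp X^{1-\delta_{\min}}
$$
because ``the powers $X^{\gamma_j'}$ have pairwise distinct orders of magnitude.'' The bare powers do, but the relevant coefficients are $h_jX^{\gamma_j}$ with $|h_j|$ ranging up to $X^{\delta_j}$ (or $X^{\sigma}$ in your normalisation), so each product ranges over an interval reaching up to about $X$; for many integer vectors $\boldsymbol h$ the terms are of the same order and can nearly cancel, and the distinctness of the factors $\gamma_j(\gamma_j-1)\cdots(\gamma_j-s+1)$ does not prevent this. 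Handling exactly this near-cancellation is the crux of the paper's argument: the $m$-range is split into a set $\cI_1$ where the combination $|h_1X^{\gamma_1}(m/M)^{\gamma_1}+\cdots+h_dX^{\gamma_d}(m/M)^{\gamma_d}|$ is small --- whose size is controlled by the Diophantine counting result of Zhai (Lemma \ref{lemma: zhai's lemma}), with the characteristic exponent $1/(d-1)$ --- and a complement $\cI_2$ on which one finally has a usable lower bound $\Delta\gg X^{2\sigma}$ (not $\asymp X$) to feed into Theorem \ref{theorem: 1} and the exponent-pair estimates. Your proposal has no mechanism for the small-$\Delta$ set at all.

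This omission also falsifies your account of the shape of the final result. The threshold $\sigma<1/(d+1)$ for large $d$ does not come from ``short variables in the Heath-Brown identity''; it comes precisely from the $\cI_1$ contribution, where Zhai's lemma combined with the $d$-fold $\boldsymbol h$-sum yields a bound of order $X^{1+(d+1)\sigma/(d-1)-1/(d-1)}$, forcing $(d+1)\sigma<1$. The constant $290/3297$ then arises, for small $d$, from making the admissible Type I range $M<X^{29/42-2\sigma}$ (exponent pair $(13/84,55/84)$ on the long smooth variable) overlap the Type II range $X^{117\sigma/20}<M<X^{1-2\sigma}$ (Theorem \ref{theorem: 1} with $(11/28,11/21)$), i.e.\ from $157\sigma/20<29/42$; your proposal asserts these numbers appear from ``optimisation'' without exhibiting the two propositions whose ranges must interlock. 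As written, the argument would not close: you would need to reinstate the $\cI_1/\cI_2$ dichotomy and the counting lemma before any choice of exponent pairs can be made.
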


Another application of Theorem \ref{theorem: 1} is the iterated Piatetski-Shapiro sequences of the form
$$
\mathcal{N}^{(c_1,c_2)} = \big( \lfloor\lfloor n^{c_1}\rfloor ^{c_2}\rfloor\big)_{n=1}^{\infty},
$$
where $1<c_1,c_2<2$ are real numbers. Obviously, the elements in $\mathcal{N}^{(c_1,c_2)}$
construct new sparser sets. It is natural to establish a prime number theorem for the iterated Piatetski-Shapiro sequences. Our result is enunciated as follows.

\begin{theorem}\label{theorem: 3} Let $1<c_1,c_2<2$ be real numbers and denote
$\gamma_1=1/c_1$ and $\gamma_2=1/c_2$. Denote the cardinality
$$
\nonumber \cP(X; c_1,c_2)=
\#\big\{p\leqslant X: p=\lfloor\lfloor h^{c_1}\rfloor ^{c_2}\rfloor \ \hbox{for some integer}
\ h\geqslant 1\big\}.
$$
If $\gamma_1$ and $\gamma_2$ satisfy that
\begin{align*}
26\gamma_1\gamma_2-2\gamma_2-23>0,
\end{align*}
we have
$$
\nonumber \cP(X; c_1,c_2)=(1+o(1))\frac{X^{\gamma_1\gamma_2}}{c_1c_2\log X}.
$$
\end{theorem}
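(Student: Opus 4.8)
The plan is to run the standard Piatetski--Shapiro approach --- unfold the floor functions, replace the resulting sawtooths by truncated Fourier series, decompose $\Lambda$ by a combinatorial identity, and bound the resulting bilinear exponential sums --- but now adapted to a \emph{doubly nested} floor, with Theorem \ref{theorem: 1} supplying the estimate for the bilinear sums. First, since $c_1c_2>1$ the map $h\mapsto\lfloor\lfloor h^{c_1}\rfloor^{c_2}\rfloor$ is eventually strictly increasing, so
$$
\cP(X;c_1,c_2)=\sum_{h\le H}\ind{\PP}\!\big(\lfloor\lfloor h^{c_1}\rfloor^{c_2}\rfloor\big)+O(1),\qquad H=X^{\gamma_1\gamma_2}(1+o(1)),
$$
and one checks that the hypothesis $26\gamma_1\gamma_2-2\gamma_2-23>0$ forces $\gamma_1\gamma_2>12/13>1/2$. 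Writing $m=\lfloor\lfloor h^{c_1}\rfloor^{c_2}\rfloor$ and collecting the $h$'s by the value of $m$ (for $m$ large there is at most one such $h$), then replacing $\ind{\PP}$ by $\Lambda/\log$ (the proper prime powers contribute $O(X^{1/2})$, negligible as $\gamma_1\gamma_2>1/2$), the problem reduces to an asymptotic for $\sum_{m\le X}\Lambda(m)\,w(m)$, where for $m$ large
$$
w(m)=\sum_{m^{\gamma_2}\le n<(m+1)^{\gamma_2}}\big(\lfloor -n^{\gamma_1}\rfloor-\lfloor -(n+1)^{\gamma_1}\rfloor\big)
$$
is the indicator that $m$ lies in the iterated sequence.

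Next I would unfold the two floors in $w(m)$. With the sawtooth $\psi(t)=t-\lfloor t\rfloor-\tfrac12$, so that $\lfloor -t\rfloor=-t-\tfrac12+\psi(t)$ for $t\notin\ZZ$, the quantity $w(m)$ decomposes into a smooth main term of size $\gamma_1\gamma_2\,m^{\gamma_1\gamma_2-1}$ plus error terms built from $\psi(m^{\gamma_2})$, $\psi\!\big(\lceil m^{\gamma_2}\rceil^{\gamma_1}\big)$ and products of these; the nested quantity $\lceil m^{\gamma_2}\rceil^{\gamma_1}=m^{\gamma_1\gamma_2}+O(m^{\gamma_1\gamma_2-\gamma_2})$ is exactly what introduces a \emph{second} fractional power $m^{\gamma_1\gamma_2}$ and turns the eventual phase into a genuine two--term polynomial. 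Summing the smooth main term against $\Lambda(m)$ and invoking the prime number theorem and partial summation yields the claimed main term $(1+o(1))X^{\gamma_1\gamma_2}/(c_1c_2\log X)$; it then remains to show that each error term is $o\!\big(X^{\gamma_1\gamma_2}/\log X\big)$, in fact with a power saving.

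For the error terms, expand each $\psi$ by Vaaler's truncated Fourier approximation of degree $K$, a small power of $X$ (the tail of $\psi$ produces only a harmless $O(X/K)$--type loss). This leaves, for $1\le|k_1|,|k_2|\le K$, the sums
$$
\sum_{m\le X}\Lambda(m)\,\e\!\big(k_1m^{\gamma_2}+k_2m^{\gamma_1\gamma_2}+\cdots\big),
$$
where the dots are lower--order corrections coming from the nested floor (notably from the inner sawtooth $\psi(n^{\gamma_1})$). Applying a combinatorial identity for $\Lambda$ --- Vaughan's identity, or Heath--Brown's identity to make the Type II ranges as favourable as possible --- splits these into Type I sums $\sum_{d\sim D}c_d\sum_{\ell}\e(\phi(d\ell))$ and Type II sums $\sum_{r\sim R}\sum_{\ell\sim L}\alpha_r\beta_\ell\,\e(\phi(r\ell))$, where in each case $\phi(x)$ is a fixed real combination of $x^{\gamma_2}$ and $x^{\gamma_1\gamma_2}$ with coefficients of size $\asymp k_1,k_2$ and $RL\asymp X$. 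The one--dimensional Type I sums are dispatched by the classical theory of exponent pairs (splitting each dyadic block according to which power governs the derivatives). The Type II sums are precisely of the shape of $\cS$ in Theorem \ref{theorem: 1}, with $d=2$, the two exponents being $\gamma_2$ and $\gamma_1\gamma_2$ (entering symmetrically in the two bilinear variables) and $|\alpha_r|,|\beta_\ell|\ll X^{\eps}$; an optimal exponent pair $(\kappa,\lambda)$ in Theorem \ref{theorem: 1} then gives a bound on $\cS$ with a power saving in $X$.

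The crux, and the step I expect to be the main obstacle, is the final balancing act. One must choose the exponent pair together with the Type I/Type II cutoff in the $\Lambda$--decomposition so that all four terms on the right of Theorem \ref{theorem: 1}, the Type I contribution, and the $O(X/K)$ Fourier--tail loss are simultaneously $o(X^{\gamma_1\gamma_2})$ (here $\Delta$ in Theorem \ref{theorem: 1} is a small power of $X$ controlled by the Fourier variables $k_1,k_2$); tracing through this system of inequalities is exactly what carves out the admissible region $26\gamma_1\gamma_2-2\gamma_2-23>0$. A companion technical point --- and the conceptual reason Theorem \ref{theorem: 1} is needed here, whereas monomial--phase bounds suffice for a single Piatetski--Shapiro sequence --- is that one must check the nested--floor corrections (replacing $\lceil m^{\gamma_2}\rceil^{\gamma_1}$ by $m^{\gamma_1\gamma_2}$, handling the inner sawtooth, etc.) are either negligible or absorbed into the polynomial phase $\phi$. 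Reassembling the main term with these error estimates then gives $\cP(X;c_1,c_2)=(1+o(1))\dfrac{X^{\gamma_1\gamma_2}}{c_1c_2\log X}$, as claimed.
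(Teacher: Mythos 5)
Your overall architecture (reduce to primes weighted by an indicator of the iterated sequence, extract the smooth main term, expand the remaining sawtooths, decompose $\Lambda$, and feed the resulting bilinear sums with a two--power phase into Theorem \ref{theorem: 1}) matches the paper's strategy at a high level, and your reduction of the main term is fine. But there is a genuine gap at the step you defer as a ``companion technical point'': the nested floor cannot be absorbed into a polynomial phase. After unfolding, the relevant phase contains $h\lfloor (m+1)^{\gamma_2}\rfloor^{\gamma_1}$, and writing $\lceil m^{\gamma_2}\rceil^{\gamma_1}=\big(m^{\gamma_2}+(1-\{m^{\gamma_2}\})\big)^{\gamma_1}=m^{\gamma_1\gamma_2}+\gamma_1(1-\{m^{\gamma_2}\})m^{\gamma_1\gamma_2-\gamma_2}+O(m^{\gamma_1\gamma_2-2\gamma_2})$ shows that the first correction, multiplied by a Fourier frequency of size up to $X^{1-\gamma_1\gamma_2}$, contributes $\asymp X^{1-\gamma_2}$ to the phase. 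This is unbounded, so it is neither negligible nor a ``fixed real combination of $x^{\gamma_2}$ and $x^{\gamma_1\gamma_2}$'': it depends nonlinearly (through the exponent $\gamma_1$) on the fractional part $\{m^{\gamma_2}\}$, which is not a smooth function of $m$. Your proposal as written would therefore bound the wrong exponential sum.

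This is precisely why the paper routes the argument through Kolesnik's expansion (Lemma \ref{lemma: Kolesnik}): it discretizes the fractional part into values $w/W$ and introduces an auxiliary Fourier variable $j$, replacing the single sum by a family of sums with phases $h\big((m+1)^{\gamma_2}-w/W\big)^{\gamma_1}+km^{\gamma_2}+j(m+1)^{\gamma_2}$, each of which genuinely has a smooth two--power phase and can then be treated by Vaughan's identity together with exponent--pair and Weyl--van der Corput estimates (Lemmas \ref{lemma: Gamma I} and \ref{lemma: Gamma II}); the condition $26\gamma_1\gamma_2-2\gamma_2-23>0$ emerges from balancing those estimates, including the extra ranges of $j$ up to $W^{1+2/r}$, not merely from optimizing Theorem \ref{theorem: 1}. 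To repair your argument you would need to supply an equivalent device (Kolesnik's lemma, or a two--dimensional Vaaler/Fourier expansion in both $m^{\gamma_2}$ and the inner fractional part) and then redo the Type I/Type II analysis for the enlarged family of phases; without it the proof does not go through.
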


The following figure (Figure \ref{fig:c1c2}) provides the admissible range of $c_1$ and $c_2$.

\begin{figure}[htbp]
  \centering
  \includegraphics[width=0.75\textwidth]{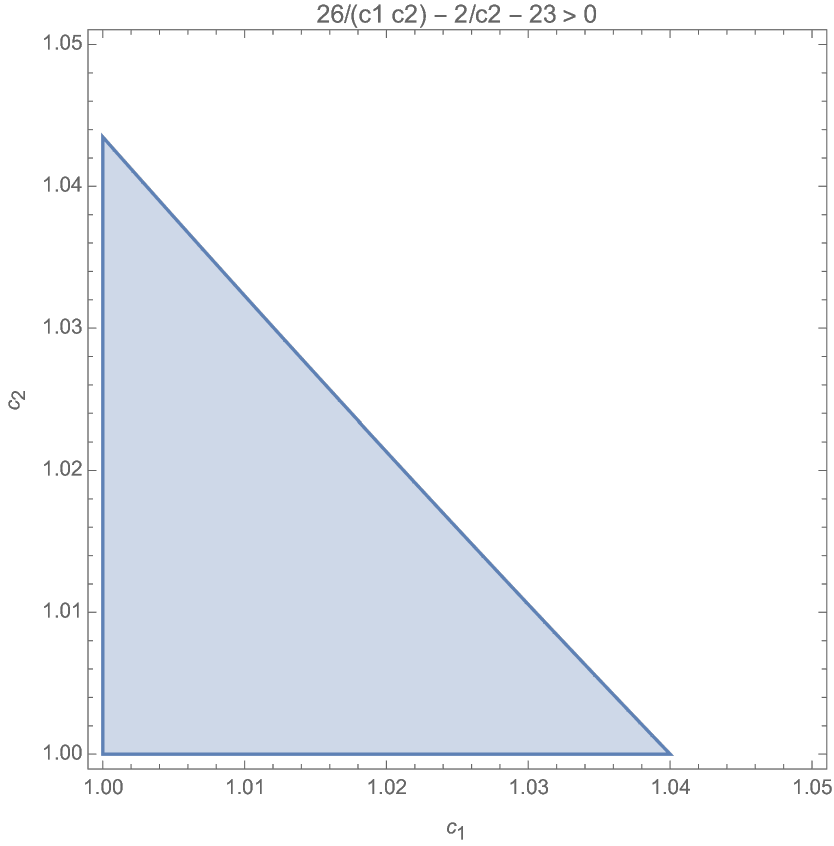}
  \caption{The admissible range of $c_1$ and $c_2$}\label{fig:c1c2}
\end{figure}

\subsection{Notations}

As is customary, we write
$$
\e(x)= \exp(2\pi ix)\mand
\{ x \} = x-\fl{x}\qquad(x\in\RR).
$$
We denote by $\Lambda(n)$ the Mangoldt function and denote by
$\|x\|$ the distance to the nearest integer of $x$.
The sawtooth function is defined by
$$
\psi(x) = x-\fl{x}-\tfrac12=\{x\}-\tfrac12\qquad(x\in\RR).
$$


For given functions $F$ and $G$, the notations $F\ll G$ and $F=O(G)$ are equivalent to the statement that the inequality $|F|\le C|G|$ holds with some constant $C>0$.
We write $F\asymp G$ to indicate $F\ll G\ll F$.
We also let $\eps>0$ be a sufficiently small real number throughout this paper.

\section{Outlines}

Our improvements on exponential sums relies on an observation that we can apply exponent pair method together with a counting problem (Lemma \ref{lemma: zhai's lemma}). We complete the proof of Theorem \ref{theorem: 1} in Section \ref{section: bounds of es}.

The applications to prime numbers are started by a lattice method investigated by Harman \cite{Harman1993} and Baker \cite{Baker2014}. We provide a slightly more general form here.  The theorem is proved in Section \ref{subsection: polar Lattice}.

\begin{theorem}\label{theorem: polar}
	Let $\Gamma$ denote a lattice in $\RR^d$ and $\Pi$ denote the polar lattice of $\Gamma$. Let $a_1,\cdots,a_N$ be complex weights, and let $\alpha_1,\cdots,\alpha_N$ be a sequence in
	$\mathbb{R}^d$. Define
	$$
	\cN = \sum_{\boldsymbol{\alpha}_n \in \cB_d \pmod{\Gamma}} a_n,
	$$
	where $\boldsymbol{\alpha}_n=(\alpha_1,\cdots,\alpha_N)$ and $\cB_d=[-1,1]^d$. Then we have
	$$
	\cN = \frac{1}{\det\Gamma} \bigg( (2^d + O(w^{-1}))
	\sum_{n\leqslant N}a_n + O\bigg( \sum_{\substack{\boldsymbol{q}\in\Pi \\ 0<|\boldsymbol{q}|<w}} \bigg| \sum_{n\leqslant N}a_ne(\boldsymbol{q \alpha}_n) \bigg| \bigg) \bigg),
	$$
	where $w$ is a parameter in $(1,\infty)$ at our disposal.
\end{theorem}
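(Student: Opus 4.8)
The plan is to establish this as an essentially standard lattice-point-counting identity via a smoothed indicator function for the box $\cB_d = [-1,1]^d$, reduced modulo $\Gamma$, expanded in a Fourier series over the polar lattice $\Pi$. First I would fix a smooth, compactly supported bump $\phi \colon \RR^d \to [0,\infty)$ with $\phi$ supported in a small ball and $\int \phi = 1$, and form the mollified weight $\chi_\pm = \ind{\cB_d^\pm} * \phi_\delta$, where $\cB_d^+ \supseteq \cB_d \supseteq \cB_d^-$ are dilates of the cube by $1 \pm O(\delta)$ and $\phi_\delta(x) = \delta^{-d}\phi(x/\delta)$, with the scale $\delta \asymp w^{-1}$ to be optimized at the end. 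The key point is that $\chi_\pm$ are smooth, equal to $1$ on $\cB_d$ (resp. vanish outside $\cB_d$), sandwich the true indicator, and have rapidly decaying Fourier coefficients: upon $\Gamma$-periodization, $\widehat{\chi_\pm}(\boldsymbol q)$ for $\boldsymbol q \in \Pi$ satisfies $|\widehat{\chi_\pm}(\boldsymbol q)| \ll_A (1+\delta|\boldsymbol q|)^{-A}$ for every $A$, while $\widehat{\chi_\pm}(\mathbf 0) = \mathrm{vol}(\cB_d) + O(\delta) = 2^d + O(w^{-1})$.

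Next I would write, for the periodization $\Phi_\pm(x) = \sum_{\boldsymbol\gamma \in \Gamma} \chi_\pm(x + \boldsymbol\gamma)$, the absolutely convergent Poisson/Fourier expansion
$$
\Phi_\pm(x) = \frac{1}{\det\Gamma}\sum_{\boldsymbol q \in \Pi} \widehat{\chi_\pm}(\boldsymbol q)\, e(\boldsymbol q \cdot x).
$$
Since $\Phi_-(x) \le \ind{x \in \cB_d \pmod \Gamma} \le \Phi_+(x)$, applying this with $x = \boldsymbol\alpha_n$, multiplying by $a_n$ and summing over $n \le N$ bounds $\cN$ above and below by
$$
\frac{1}{\det\Gamma}\Big( \widehat{\chi_\pm}(\mathbf 0)\sum_{n \le N} a_n + \sum_{\boldsymbol q \in \Pi,\ \boldsymbol q \ne \mathbf 0} \widehat{\chi_\pm}(\boldsymbol q) \sum_{n \le N} a_n e(\boldsymbol q \cdot \boldsymbol\alpha_n)\Big).
$$
Splitting the $\boldsymbol q$-sum at $|\boldsymbol q| = w$: the range $0 < |\boldsymbol q| < w$ contributes the stated error term (absorbing $|\widehat{\chi_\pm}(\boldsymbol q)| \ll 1$ into the $O(\cdot)$), and the tail $|\boldsymbol q| \ge w$ is controlled by the rapid decay $|\widehat{\chi_\pm}(\boldsymbol q)| \ll_A (\delta|\boldsymbol q|)^{-A} \ll_A (|\boldsymbol q|/w)^{-A}$ together with the trivial bound $|\sum_n a_n e(\boldsymbol q\cdot\boldsymbol\alpha_n)| \le \sum_{n\le N}|a_n|$, so summing $(|\boldsymbol q|/w)^{-A}$ over $\boldsymbol q \in \Pi$, $|\boldsymbol q| \ge w$, with $A$ large converges and is $\ll w^{-1}\sum_{n\le N}|a_n|$; this is dominated by the already-present $O(w^{-1})$ main-term error once one notes $\sum|a_n|$ can be compared to the error term in the theorem (or simply folded in). Combining the upper and lower bounds and noting $\widehat{\chi_\pm}(\mathbf 0) = 2^d + O(w^{-1})$ gives the claimed identity.

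The main obstacle is bookkeeping rather than conceptual: one must make sure the lattice $\Gamma$ may be arbitrary (not just $\ZZ^d$), which is handled cleanly by the linear change of variables sending $\Gamma$ to $\ZZ^d$ and $\Pi$ to $\ZZ^d$ correspondingly, picking up the factor $1/\det\Gamma$; and one must verify that the smoothing scale $\delta \asymp w^{-1}$ simultaneously yields the $O(w^{-1})$ perturbation of the volume and enough Fourier decay to kill the tail — a standard fact about convolving the indicator of a nice convex body with a bump, where the boundary contributes $O(\delta)$ to the measure and integration by parts $A$ times against the smooth $\phi_\delta$ gives the $(1+\delta|\boldsymbol q|)^{-A}$ decay. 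No hypothesis on the $a_n$ beyond being complex weights is needed since every bound used is either trivial or depends only on $\sum|a_n|$, which is exactly what appears in the error term.
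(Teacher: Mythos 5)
Your route (mollify the sharp indicator of the box at scale $\delta\asymp w^{-1}$, expand over $\Pi$, and estimate a Fourier tail) is genuinely different from the paper's, which uses the band--limited Selberg/Beurling-type majorants and minorants of Lemma~\ref{lem:fourier analysis}: those satisfy $\hat G_i(t)=0$ for $|t|\geqslant w$ after rescaling, so the expansion over $\Pi$ is \emph{exactly} supported on $|\boldsymbol q|<w$ and no tail ever appears. This is not a cosmetic difference, because your tail estimate is wrong. First, $\sum_{\boldsymbol q\in\Pi,\,|\boldsymbol q|\geqslant w}(|\boldsymbol q|/w)^{-A}$ is not $O(w^{-1})$: the shell $|\boldsymbol q|\asymp 2^kw$ contains $\asymp (2^kw)^d/\det\Pi$ lattice points, so the sum is $\asymp w^d/\det\Pi$. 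Second, even if you use the full decay $|\hat\chi_{\pm}(\boldsymbol q)|\ll \prod_j\min(1,|q_j|^{-1})\cdot(1+|\boldsymbol q|/w)^{-A}$, the shell $|\boldsymbol q|\asymp w$ alone contributes $\gg 1$ (in one dimension $\sum_{w\leqslant|q|\leqslant 2w}|q|^{-1}\asymp 1$), so the tail is at best $O(1)\sum_n|a_n|$, which cannot be absorbed into $(2^d+O(w^{-1}))\sum_n a_n$ nor into the exponential sums over $0<|\boldsymbol q|<w$. Sharp cutoffs convolved with a bump cannot simultaneously give $L^1$-error $O(w^{-1})$ and $\ell^1$-negligible Fourier mass beyond $|\boldsymbol q|\geqslant w$; that is precisely what the Selberg construction buys and why the paper invokes it.

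A second gap: the step ``since $\Phi_-\leqslant \ind{\,\cdot\,\in\cB_d\ (\mathrm{mod}\ \Gamma)}\leqslant\Phi_+$, multiplying by $a_n$ and summing bounds $\cN$ above and below'' is false for complex (or merely signed) $a_n$ --- multiplying an inequality by $a_n$ does not preserve it. Your closing remark that ``no hypothesis on the $a_n$ is needed'' is exactly backwards: the sandwiching argument requires nonnegative weights, and the paper spends the last part of its proof decomposing $a_n$ into the sets $\mathcal A_1,\mathcal A_2$ and then into real and imaginary parts before recombining. (That same decomposition is also what would let a corrected tail bound of the form $w^{-1}\sum_n a_n^{(i)}$ with $a_n^{(i)}\geqslant 0$ be absorbed, since then $\sum|a_n^{(i)}|=\sum a_n^{(i)}$.) There is also a smaller multidimensional point you sidestep by mollifying the $d$-dimensional box directly, whereas the paper must use the combinatorial identity \eqref{1.1} because a product of one-dimensional minorants need not minorize the product indicator; your choice avoids that issue, but it does not rescue the tail.
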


\subsection{Intersection of Piatetski-Shapiro sequences}
Based on the lattice method, we prove Proposition \ref{theorem: characterization} to analyze primes in the intersection of Piatetski-Shapiro sequences, which means that it is sufficient to bound
$$
\mathop{\sum_{\boldsymbol{h}\in\mathbb{Z}^d\setminus\{\boldsymbol{0}\}}}
_{|h_j|<\mathcal{L}X^{\delta_j}}\Bigg|\sum_{X < n \leqslant aX}\Lambda(n)
\mathbf{e}\left(h_1n^{\gamma_1}+\cdots+h_dn^{\gamma_d}\right)\Bigg|
$$
with $\boldsymbol{h}=(h_1,\cdots,h_d)$ and $\cL = \log X$.
Now via the well-known Heath-Brown identity (Lemma~\ref{lem: Heath-Brown inentity}), we need to work on the following sum
$$
\mathfrak{S}=\mathop{\sum_{0<|h_j|<X^{\delta_j}\mathcal{L}}}_{j=1,\cdots,d}
\Bigg|\mathop{{\sum_{m \sim M} \sum_{n \sim N}}}_{mn \sim X} a_m b_n \e \left(
h_1(mn)^{\gamma_1}+\cdots+h_d(mn)^{\gamma_d}\right)\Bigg|,
$$
where $N \ge 1, M \ge 1, MN \asymp X$, $|a_m|\ll X^{\varepsilon}$ and $|b_n|\ll X^{\varepsilon}$
for any $\varepsilon>0$. If $b_n=1$ or $b_n=\log n$, we call it a Type I sum and denote it as $\mathfrak{S}_{I}$; otherwise we call it as a Type II sum and denote it as $\mathfrak{S}_{II}$.

A normal method is to bound type I sum by the van der Corput inequality or a higher derivative test. The type II sum is estimated by the Weyl-van der Corput inequality (A-process) together with the van der Corput inequality. We mention that Zhai \cite{Zhai1999} and Baker \cite{Baker2014} investigated the problem exactly by the normal method we mention here.

In this paper, we bound the type II sum by our Theorem \ref{theorem: 1}. The type I sum is bounded by an exponent pair method. Finding that how to apply an exponent pair method is one of the main reasons leading to the improvements. Section \ref{section: Sums of an arithmetic function} gives a detailed proof how we apply the lattice method to the intersection of Piatetski-Shapiro sequences and Section \ref{section: intersection P-S primes} provides the estimation of exponential sums based on Theorem \ref{theorem: 1}.

\subsection{The iterated Piatetski-Shapiro sequence}

We also deploy primes in the iterated Piatetski-Shapiro sequence by our lattice method (Theorem \ref{theorem: polar}) and arrive at a sum of the form
$$
\sum_{\substack{(k,h) \in \ZZ^d \backslash (0,0) \\ |k|<\cL X^{1-\gamma_2} \\ |h|<\cL X^{\gamma_2(1-\gamma_1)}}} \Bigg| \sum_{X < m \leqslant aX}\Lambda(m) \e (km^{\gamma_2} + h\lfloor(m+1)^{\gamma_2}\rfloor^{\gamma_1}) \Bigg|.
$$
The trick is that we need a Fourier analysis method to work on exponential sums with floor functions. We introduce a method due to Kolesnik \cite[Lemma 1]{Kolesnik2003}.

\begin{lemma}\label{lemma: Kolesnik}
	Let $f(\mu,\nu)$ be a real valued function such that
	$$
	|f(\mu_1,\nu)-f(\mu_2,\nu)|\leqslant \lambda |\mu_1-\mu_2|.
	$$
	Then, for any real function $g(x)$, any positive integer $r$ and any $W>0$
	we have
	\begin{align*}
		\sum_{x}a(x)&\mathbf{e}\left(f(g(x),\{h(x)\})\right)\\
		&=\frac{1}{W}\sum_{w=0}^{W-1}\sum_{j=0}^{\infty}B_{j,w}\sum_{x}a(x)
		\mathbf{e}\left(f\left(g(x),\frac{w}{W}\right)+jh(x)\right)\\
		&\quad+O\left(\frac{\lambda r+r}{W}\sum_{x}|a(x)|\right)\\
		&\quad+O\left(\frac{r}{W}\sum_{j=0}^{\infty}\frac{\sin(2\pi rj/W)}{\sin(\pi j/W)}
		A_j\sum_{x}|a(x)|\mathbf{e}(jh(x))\right),
	\end{align*}
	where
	\begin{align*}
		&A_j=\left(\frac{\sin(\pi j/M)}{\pi j/W}\right)^{r+1},\ \ \ A_0=1\\
		&B_{j,w}=A_j\mathbf{e}\left(-\frac{(2w+1)j}{2W}\right).
	\end{align*}
\end{lemma}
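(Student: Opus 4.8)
The plan is to prove the identity by a Fourier‑analytic discretization of the fractional part $\{h(x)\}$ onto the arithmetic progression $w/W$, $0\le w\le W-1$. The engine is an auxiliary $1$‑periodic kernel $\Phi$ built at scale $1/W$: take $\Phi$ to be the $(r+1)$‑fold self‑convolution on $\RR/\ZZ$ of the indicator of an interval of length $1/W$, normalised to have integral $1/W$ and re‑centred so that its mass sits at $1/(2W)$. Two facts about $\Phi$ drive everything. First, since the Fourier coefficients of a length‑$1/W$ indicator are $\frac1W\cdot\frac{\sin(\pi j/W)}{\pi j/W}$ up to a phase, $\Phi$ has the explicit expansion $\Phi(\theta)=\frac1W\sum_{j}A_j\,\e(-j/(2W))\,\e(j\theta)$ with $A_j=\big(\frac{\sin(\pi j/W)}{\pi j/W}\big)^{r+1}$; consequently the coefficient of $\e(jh(x))$ in $\Phi(h(x)-w/W)$ is exactly $\frac1W B_{j,w}$. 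Second, because $A_{kW}=0$ for every nonzero integer $k$, a geometric‑sum (Poisson summation) computation gives the exact partition of unity $\sum_{w=0}^{W-1}\Phi(\theta-w/W)=1$, while $\Phi\ge0$ is supported modulo $1$ on an interval of length $\asymp(r+1)/W$ around $1/(2W)$.

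The body of the argument is then short. Insert the partition of unity: $\e(f(g(x),\{h(x)\}))=\sum_{w}\Phi(h(x)-w/W)\,\e(f(g(x),\{h(x)\}))$. On the support of the $w$‑th summand, $\{h(x)\}$ lies within $O((r+1)/W)$ of $w/W$ modulo $1$, so the Lipschitz hypothesis on $f$ lets one replace the relevant argument of $f$ by the grid value at a cost of $O(\lambda(r+1)/W)$ per term; summing against $|a(x)|$ and using $\Phi\ge0$, $\sum_w\Phi\equiv1$, this contributes $O\big(\frac{\lambda r}{W}\sum_x|a(x)|\big)$. Next expand $\Phi(h(x)-w/W)=\frac1W\sum_{j}B_{j,w}\,\e(jh(x))$---absolutely convergent since $r\ge1$ forces $A_j\ll|j|^{-(r+1)}$---and interchange the $x$‑, $w$‑ and $j$‑summations to reach $\frac1W\sum_{w=0}^{W-1}\sum_{j}B_{j,w}\sum_x a(x)\,\e\big(f(g(x),w/W)+jh(x)\big)$; grouping the $\pm j$ contributions of the even kernel (and keeping the $j=0$ term separately) puts this into the stated one‑sided shape.

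For the identity to be of any use one cannot keep a genuinely infinite kernel, so the last step---and the chief obstacle---is to pass from $\Phi$ to a finite‑bandwidth kernel at the same scale, equivalently an average of sharp grid‑cell indicators over a window of length $\asymp r/W$, convolved to tame its coefficients. This damages the partition of unity by $O(r/W)$, which is exactly the residual $\frac rW\sum_x|a(x)|$ in the first error term, and the defect of this kernel relative to the ideal one produces the final error term, in which the factor $\frac{\sin(2\pi rj/W)}{\sin(\pi j/W)}$ is recognised as the length‑$2r$ geometric sum attached to the window. The computations of $A_j$, the partition‑of‑unity identity, the interchange of summations, and the elementary inequality $|\e(\alpha)-\e(\beta)|\le2\pi|\alpha-\beta|$ are all routine; what takes care is the bookkeeping between the two kernels, so that the phase $\e(-(2w+1)j/(2W))$ inside $B_{j,w}$ and the precise $r$‑dependence of the error terms---which comes entirely from the widening of the support---come out in exactly the claimed forms.
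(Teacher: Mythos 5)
First, a point of reference: the paper does not prove this lemma at all --- it is quoted from \cite[Lemma 1]{Kolesnik2003} --- so there is no internal proof to compare against. Your reconstruction does identify the correct engine behind Kolesnik's lemma: the $(r+1)$-fold self-convolution of the indicator of a length-$1/W$ interval, whose Fourier coefficients are $\frac1W A_j$ up to the centering phase $\e(-j/(2W))$; the exact partition of unity $\sum_{w=0}^{W-1}\Phi(\theta-w/W)=1$ coming from $A_{kW}=0$; and the Lipschitz replacement of the argument of $f$ by the grid value $w/W$ at cost $O(\lambda r/W)$ per term.

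There is, however, a genuine gap at the decisive step, namely the derivation of the third error term. Your explanation --- that one must ``pass to a finite-bandwidth kernel'' because ``one cannot keep a genuinely infinite kernel'' --- is inconsistent with your own construction: for $r\ge 1$ one has $A_j\ll (W/|j|)^{r+1}$, so the Fourier series of $\Phi$ converges absolutely, the interchange of the $x$-, $w$- and $j$-sums is already legitimate, and your first two steps would terminate with \emph{no} third error term at all. What actually necessitates the extra terms is a point your sketch passes over: on the support of $\Phi(h(x)-w/W)$ the quantity $\{h(x)\}$ is close to $w/W$ only modulo $1$, while $f(\mu,\nu)$ is merely Lipschitz and not $1$-periodic in the variable being replaced (note also that the stated hypothesis is Lipschitz in $\mu$, whereas the replacement is performed in $\nu$). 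For the $O(r)$ cells $w$ adjacent to $0$ and to $W-1$, the wrap-around makes the true distance of order $1$ and the Lipschitz bound gives nothing; those boundary cells must be estimated separately by $\sum_x|a(x)|\,\Phi(h(x)-w/W)$ and re-expanded in Fourier series, and it is the geometric sum over those $O(r)$ consecutive values of $w$ that produces the Dirichlet-kernel factor $\sin(2\pi rj/W)/\sin(\pi j/W)$ together with the weight $|a(x)|\e(jh(x))$. As written, your argument never performs this step, so the second and third error terms are asserted rather than derived. Two smaller defects: reducing $\sum_{j\in\ZZ}$ to $\sum_{j\ge 0}$ by ``grouping $\pm j$'' is not licit, since $B_{j,w}\sum_x a(x)\e(f(g(x),w/W)+jh(x))$ is not real; and the partition-of-unity identity requires $W$ to be a positive integer, not merely $W>0$.
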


After many technical calculations, it is sufficient to prove
$$
\Psi =\sum_{X < m \leqslant aX}
\Lambda(m)\mathbf{e}\left(h\left((m+1)^{\gamma_2}-\frac{w}{W}\right)^{\gamma_1}
+km^{\gamma_2}+j(m+1)^{\gamma_2}\right).
$$
Now the Vaughan identity (Lemma~\ref{lem:vaughan}) also allows us to bound corresponding type I and type II sums, which are similar to the proof of Theorem \ref{theorem: 1}. We make the technical discussion together with the proof of Theorem \ref{theorem: 3} in Section \ref{section: proof 3} and finish the bounds of error terms in Section \ref{section: 9}.

In this article, we only consider the twice-iterated Piatetski-Shapiro sequences. We believe that our method also works for multiple iterated Piatetski-Shapiro sequences of the form
$$
\mathcal{N}^{(\mathbf{c})} = \big(\fl{\fl{\cdots{\fl{n^{c_1}}^{c_2}}\cdots}^{c_n}}\big)_{n=1}^\infty.
$$
However, this may be done by a very complicated induction which can take very long calculations. We think this may be of independent interest.

\section{Preliminary}\label{section: Preliminary}

The following lemma is due to Zhai \cite[Proposition 1]{Zhai1999}.
\begin{lemma}\label{lemma: zhai's lemma}
Let $d\geqslant 2$ be a fixed integer. Suppose that $a_1,\cdots,a_d$ are non-zero real numbers,
$\alpha_1,\cdots,\alpha_d$ are distinct real constants such that $\alpha_1,\cdots,\alpha_d\not\in\mathbb{Z}$,
and assume that $\Theta>0$. Let $\mathcal{I}\subset[1,2]$ be a subinterval such that
$$
\left|a_1 t^{\alpha_1} + \cdots+a_d t^{\alpha_d} \right|\leqslant \Theta,\ \ \ t\in\mathcal{I}.
$$
Then we have
$$
|\mathcal{I}|\ll \left(\frac{\Theta}{|a_1|+\cdots+|a_d|}\right)^{1/(d-1)}.
$$
\end{lemma}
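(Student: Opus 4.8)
The plan is to induct on $d$, and in fact to prove the formally stronger statement in which $\alpha_1,\dots,\alpha_d$ are only required to be distinct reals, since the inductive reduction below does not preserve non-integrality of the exponents. Since $1\le t\le 2$, the first move is to divide the hypothesis by $t^{\alpha_1}$: writing $\beta_i=\alpha_i-\alpha_1$ (so $\beta_1=0$ and the $\beta_i$ stay distinct) one is reduced to
$$
|h(t)|\ll_\alpha\Theta\quad(t\in\mathcal I),\qquad h(t)=a_1+a_2t^{\beta_2}+\dots+a_dt^{\beta_d}.
$$
For the base case $d=2$ I would apply the mean value theorem between the endpoints $t',t''$ of $\mathcal I$: $h(t')-h(t'')=a_2\beta_2\xi^{\beta_2-1}(t'-t'')$ for some $\xi\in\mathcal I$, and since $\xi^{\beta_2-1}\asymp_\alpha 1$ on $[1,2]$ this gives $|a_2|\,|\mathcal I|\ll_\alpha\Theta$; evaluating $h$ at one point then gives $|a_1|\ll_\alpha\Theta+|a_2|$, and combining these with $|\mathcal I|\le 1$ yields $(|a_1|+|a_2|)\,|\mathcal I|\ll_\alpha\Theta$, which is the claim.

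For $d\ge 3$, assuming the result for $d-1$ terms, the idea is to differentiate: $h'(t)=\sum_{i=2}^d a_i\beta_i t^{\beta_i-1}$ is a nonzero generalized polynomial with $d-1$ terms and distinct exponents. The crucial step is to pass from smallness of $h$ on $\mathcal I$ to smallness of $h'$ on a substantial subinterval, i.e.\ to produce $\mathcal I'\subseteq\mathcal I$ with $|\mathcal I'|\gg_d|\mathcal I|$ on which $|h'|\ll_d\Theta/|\mathcal I|$. To do this I would set $V=C_d\Theta/|\mathcal I|$ and observe that on any maximal subinterval of $\{t\in\mathcal I:|h'(t)|>V\}$ the function $h'$ has constant sign (being continuous and nonvanishing there), so $h$ is monotone and the subinterval has length $\ll_\alpha\Theta/V$; since a nonzero generalized polynomial with $m$ terms has at most $m-1$ positive zeros, the superlevel set $\{|h'|>V\}$ has $O_d(1)$ components, hence measure $\ll_{d,\alpha}\Theta/V$, which is at most $\tfrac12|\mathcal I|$ once $C_d$ is chosen large. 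The complementary set, again with $O_d(1)$ components, then contains a component $\mathcal I'$ of length $\gg_d|\mathcal I|$.

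Feeding $h'$ into the inductive hypothesis on $\mathcal I'$, and using $|a_i\beta_i|\asymp_\alpha|a_i|$, gives $|\mathcal I'|\ll_{d,\alpha}\bigl(\Theta/(|\mathcal I|\sum_{i\ge 2}|a_i|)\bigr)^{1/(d-2)}$; combined with $|\mathcal I'|\gg_d|\mathcal I|$ this rearranges to $\sum_{i\ge 2}|a_i|\cdot|\mathcal I|^{d-1}\ll_{d,\alpha}\Theta$. Finally, evaluating $h$ at a point of $\mathcal I$ controls the coefficient left out of $h'$, namely $|a_1|\ll_\alpha\Theta+\sum_{i\ge 2}|a_i|$, so that $\sum_{i=1}^d|a_i|\ll_{d,\alpha}\Theta/|\mathcal I|^{d-1}$ by $|\mathcal I|\le 1$, which is exactly the asserted bound.

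I expect the passage from ``$h$ small on $\mathcal I$'' to ``$h'$ small on a large subinterval of $\mathcal I$'' to be the main obstacle: this is the only place where the zero-counting bound for generalized polynomials (the Chebyshev-system property of $\{t^\mu\}$, provable by a Rolle-type induction) is essential, and it is where one must keep careful track that every implied constant depends only on $d$ and the fixed exponents $\alpha_j$. I would also remark that the cruder approach of picking $d$ nodes in $\mathcal I$ and inverting the generalized Vandermonde matrix $\bigl(t_k^{\alpha_i}\bigr)$ by Cramer's rule only yields the weaker exponent $1/\binom{d}{2}$ in place of $1/(d-1)$ when $d\ge 3$; recovering the sharp exponent is precisely what the inductive differentiation buys, since each differentiation costs only a single power of $|\mathcal I|$.
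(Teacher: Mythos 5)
Your argument is correct, and note that the paper itself gives no proof of this lemma: it is simply quoted from Zhai [Proposition~1], so there is no in-text proof to compare against. Your scheme --- normalize by $t^{\alpha_1}$, control the leftover coefficient $a_1$ by evaluating at a single point, and run an induction in which one differentiates once, uses the Chebyshev-system zero bound to show $\{|h'|>V\}$ has $O_d(1)$ components each of length $O(\Theta/V)$, and then applies the inductive hypothesis to $h'$ on a surviving subinterval of length $\gg_d|\mathcal I|$ --- is precisely the Rolle-type induction that underlies results of this kind, and the exponent bookkeeping closes: $|\mathcal I|^{d-2}\ll V/\sum_{i\geqslant2}|a_i\beta_i|$ with $V\asymp_d\Theta/|\mathcal I|$ gives $\sum_{i\geqslant2}|a_i|\,|\mathcal I|^{d-1}\ll\Theta$, and $|\mathcal I|\leqslant1$ absorbs the $a_1$ term. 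You are also right that the induction forces you to drop the hypothesis $\alpha_j\notin\ZZ$, since $\beta_i-1$ can be an integer (or zero) even when the $\alpha_j$ are not, and the statement is indeed true in that generality. Three small points to make explicit in a write-up: (i) counting components of $\{h'>V\}$ amounts to counting zeros of $h'-V$, which is a generalized polynomial with at most $d$ terms only after merging a possible constant term $a_{i_0}\beta_{i_0}t^0$ with $-V$ (this can only reduce the term count, so the bound survives); (ii) you need $h'\mp V\not\equiv0$, which holds for $d\geqslant3$ because $h'$ then has at least two terms with distinct exponents and nonzero coefficients, hence is non-constant; (iii) the degenerate cases ($\mathcal I$ empty or a single point) should be dispatched first so that evaluation at a point of $\mathcal I$ makes sense. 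With those caveats the proposal is a complete and correct proof of the cited proposition.
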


We provide the method of exponent pair by the following lemma.

\begin{lemma}\label{lem: exponent pair method}
Let $s\geqslant 2$ be a positive integer, and let $f(x)$ be a real valued
function with $s$ continuous derivatives on $[N,2N]$ such that
$$
|f^{(s)}(n)|\asymp YN^{1-s}.
$$
Then we have
$$
\sum_{n\sim N}\mathbf{e}(f(n))\ll Y^{\kappa}N^{\lambda}+Y^{-1},
$$
where $(\kappa,\lambda)$ is any exponent pair.
\end{lemma}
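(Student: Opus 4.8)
The statement is the exponential–sum estimate carried by an exponent pair, and the plan is to prove it by induction on the way $(\kappa,\lambda)$ is generated from the trivial pair $(0,1)$ by the two van der Corput processes
$$
A(\kappa,\lambda)=\Bigl(\tfrac{\kappa}{2\kappa+2},\,\tfrac{\kappa+\lambda+1}{2\kappa+2}\Bigr),
\qquad
B(\kappa,\lambda)=\bigl(\lambda-\tfrac12,\,\kappa+\tfrac12\bigr).
$$
For the base pair $(0,1)$ the claim is just $\bigl|\sum_{n\sim N}\e(f(n))\bigr|\le N$. The term $Y^{-1}$ is the first-derivative test contribution, which dominates when $Y$ is bounded, so it suffices to treat $Y\ge1$ and to show that there $Y^{\kappa}N^{\lambda}$ absorbs every error term that appears. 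Throughout one works with the natural (monomial-type) normalisation in which, besides $|f^{(s)}(x)|\asymp YN^{1-s}$ with $f^{(s)}$ of constant sign, the lower-order derivatives are comparably controlled — automatic for the one-variable phases $\sum_j c_j n^{a_j}$ to which the lemma is applied in this paper, since a single monomial dominates each derivative — and $s$ is chosen at least as large as the number of $A$-steps in the word for $(\kappa,\lambda)$.

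For the $A$-step I would take $(\kappa,\lambda)=A(\kappa_0,\lambda_0)$, with the lemma known for $(\kappa_0,\lambda_0)$, and apply the Weyl–van der Corput inequality with a parameter $Q\in[1,N]$:
$$
\Bigl|\sum_{n\sim N}\e(f(n))\Bigr|^{2}
\ll \frac{N^{2}}{Q}+\frac{N}{Q}\sum_{1\le q\le Q}\Bigl|\sum_{n}\e\bigl(f(n+q)-f(n)\bigr)\Bigr|.
$$
The difference $f_q(x)=f(x+q)-f(x)$ has one fewer continuous derivative and satisfies $f_q^{(s-1)}(x)=\int_0^q f^{(s)}(x+t)\dd t\asymp(qY/N)\,N^{\,1-(s-1)}$, so the inductive hypothesis for $(\kappa_0,\lambda_0)$ at order $s-1$ bounds the inner sum by $(qY/N)^{\kappa_0}N^{\lambda_0}+(qY/N)^{-1}$. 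Summing over $q\le Q$ and choosing $Q$ so as to balance $N^{2}/Q$ against $(Y/N)^{\kappa_0}N^{\lambda_0+1}Q^{\kappa_0}$ returns, on taking a square root, exactly $Y^{\kappa}N^{\lambda}$ with $(\kappa,\lambda)=A(\kappa_0,\lambda_0)$; the remaining contribution is of lower order for $Y\ge1$. The base case $s=2$ is the second-derivative test: from $|f''(x)|\asymp YN^{-1}$, truncated Poisson summation gives $\sum_{n\sim N}\e(f(n))\ll Y^{1/2}N^{1/2}+Y^{-1}$, which is the pair $B(0,1)=(\tfrac12,\tfrac12)$.

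For the $B$-step I would put $(\kappa,\lambda)=B(\kappa_0,\lambda_0)$ and apply truncated Poisson summation to $\sum_{n\sim N}\e(f(n))$, with $f'$ monotone, $|f'|\asymp Y$ and $|f''|\asymp Y/N$: the sum becomes a sum of length $\asymp Y$ over the dual variable $\nu$, with terms $|f''(x_\nu)|^{-1/2}\e(g(\nu))$, where $f'(x_\nu)=\nu$ and $g(\nu)=f(x_\nu)-\nu x_\nu$ is the Legendre conjugate of $f$. Here $g'(\nu)=-x_\nu\asymp N$ and $g''(\nu)=-1/f''(x_\nu)\asymp N/Y$, so in the conjugate sum the length $N$ and the parameter $Y$ trade places, while the weight $|f''(x_\nu)|^{-1/2}\asymp(N/Y)^{1/2}$ comes outside. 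The inductive hypothesis for $(\kappa_0,\lambda_0)$ applied to the conjugate sum (of length $Y$ and parameter $N$) gives $\ll N^{\kappa_0}Y^{\lambda_0}$, whence $\sum_{n\sim N}\e(f(n))\ll (N/Y)^{1/2}N^{\kappa_0}Y^{\lambda_0}=N^{\kappa_0+1/2}Y^{\lambda_0-1/2}=Y^{\kappa}N^{\lambda}$; the standard remainder $O\bigl(\log(Y+2)+|f''|^{-1/2}\bigr)$ in Poisson's formula is again $\ll Y^{\kappa}N^{\lambda}$ once $Y\ge1$.

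The point I expect to require the most care is the bookkeeping: ensuring enough continuous derivatives are available at each stage (each $A$-step consumes one), that $f^{(s)}$ — and, after successive differencings, the current top-order derivative — keeps a constant sign so the $\asymp$ lower bounds are inherited, and that the monotonicity and size hypotheses needed to run Poisson summation hold at every $B$-step. For the phases met in this paper, each a finite sum of monomials with one term dominating every derivative, all of this is automatic; and in this generality the estimate is part of the classical theory of van der Corput's method and the generation of exponent pairs by the $A$- and $B$-processes (see, e.g., \cite{GrahamK1991}).
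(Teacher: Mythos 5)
The paper does not prove this lemma at all; it simply cites \cite[Chapter 3]{GrahamK1991} and \cite[Lemma 1]{CaoZ1998}, and your sketch is precisely the standard $A$/$B$-process induction contained in those references, so you are on the same route. Your one substantive caveat is the right one to make: as stated, the hypothesis controls only the single derivative $f^{(s)}$, whereas running the induction (each $A$-step consumes a derivative, each $B$-step needs monotonicity and two-sided bounds on $f'$ and $f''$) requires $|f^{(j)}(x)\bigr|\asymp YN^{1-j}$ for all $j$ up to the depth of the word defining $(\kappa,\lambda)$ — a strengthening that holds for the monomial-sum phases to which the lemma is applied in this paper, and which is built into the definition of an exponent pair in the cited sources.
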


\begin{proof}
See \cite[Chapter 3]{GrahamK1991} or \cite[Lemma 1]{CaoZ1998}.
\end{proof}

Next, we recall the following identity for the von Mangoldt function $\Lambda$, which is due to Vaughan (see Davenport~\cite[p.~139]{Daven}).

\begin{lemma}
	\label{lem:vaughan}
	Let $U,V\ge 1$ be real parameters.  For any $n>U$ we have
	$$
	\Lambda(n)=
	-\sum_{k\,|\,n}a(k)
	+\sum_{\substack{cd=n\\d\le V}}(\log c)\mu(d)
	-\sum_{\substack{kc=n\\k>1\\c>U}}\Lambda(c)b(k),
	$$
	where
	$$
	a(k)=\sum_{\substack{cd=k\\c\le U\\d\le V}}\Lambda(c)\mu(d)
	\mand b(k)=\sum_{\substack{d\,\mid\,k\\d\le V}}\mu(d)
	$$
\end{lemma}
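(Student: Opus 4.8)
The plan is to deduce the identity from the standard Dirichlet series relations valid for $\Re s>1$, namely $-\zeta'/\zeta=\sum_n\Lambda(n)n^{-s}$, $1/\zeta=\sum_n\mu(n)n^{-s}$ and $-\zeta'=\sum_n(\log n)n^{-s}$, by truncating two of these series. First I would introduce the partial sums
$$
F(s)=\sum_{n\le U}\Lambda(n)n^{-s},\qquad G(s)=\sum_{n\le V}\mu(n)n^{-s},
$$
and record the purely algebraic identity
$$
-\frac{\zeta'}{\zeta}=F-\zeta'G-\zeta FG+\left(-\frac{\zeta'}{\zeta}-F\right)(1-\zeta G),
$$
which is verified in one line by expanding the last product (the terms $\pm\zeta'G$ and $\pm\zeta FG$ cancel, leaving $-\zeta'/\zeta$). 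Since all series involved are absolutely convergent in $\Re s>1$, this is an honest identity of holomorphic functions there, so one may compare the coefficients of $n^{-s}$ on the two sides term by term.

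Next I would extract the coefficient of $n^{-s}$ under the hypothesis $n>U$. The left-hand side gives $\Lambda(n)$. The term $F$ contributes $\Lambda(n)\ind{n\le U}=0$ because $n>U$. The term $-\zeta'G=\bigl(\sum_c(\log c)c^{-s}\bigr)\bigl(\sum_{d\le V}\mu(d)d^{-s}\bigr)$ contributes $\sum_{cd=n,\,d\le V}(\log c)\mu(d)$. Writing $a(k)$ for the coefficient of $k^{-s}$ in $F(s)G(s)$, i.e. $a(k)=\sum_{cd=k,\,c\le U,\,d\le V}\Lambda(c)\mu(d)$, the term $-\zeta FG$ contributes $-\sum_{k\mid n}a(k)$. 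Finally, $-\zeta'/\zeta-F$ is the Dirichlet series $\sum_{c>U}\Lambda(c)c^{-s}$, while $1-\zeta G$ has $n$-th coefficient $\ind{n=1}-b(n)$ with $b(n)=\sum_{d\mid n,\,d\le V}\mu(d)$; since $b(1)=\mu(1)=1$, the $n=1$ coefficient vanishes and $1-\zeta G=\sum_{k>1}(-b(k))k^{-s}$. Hence the last product contributes $-\sum_{kc=n,\,k>1,\,c>U}\Lambda(c)b(k)$. Adding the four contributions yields precisely the claimed decomposition of $\Lambda(n)$.

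There is no genuine obstacle here; the only point requiring care is the bookkeeping of summation ranges after coefficient extraction — in particular, that the conditions $c>U$ and $k>1$ appearing in the third sum are exactly the supports of the series $-\zeta'/\zeta-F$ and $1-\zeta G$, and that the $n=1$ term genuinely drops out. If one wishes to avoid analytic language, the same computation can be written as an identity of finite sums for each fixed $n$, using $\Lambda=\mu*\log$ and $\sum_{d\mid n}\mu(d)=\ind{n=1}$; but the Dirichlet series formulation keeps the algebra transparent, and I would then just note that this is Vaughan's identity as presented in Davenport.
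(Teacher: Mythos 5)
Your proof is correct, and it is essentially the argument the paper points to: the lemma is stated with a citation to Davenport (p.~139), whose proof is exactly this Dirichlet-series decomposition $-\zeta'/\zeta=F-\zeta'G-\zeta FG+(-\zeta'/\zeta-F)(1-\zeta G)$ followed by comparison of coefficients for $n>U$. Your bookkeeping of the ranges (vanishing of the $F$-term for $n>U$, and the cancellation of the $k=1$ term via $b(1)=\mu(1)=1$, which uses $V\ge 1$) is accurate.
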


A direct application of the Vaughan identity is to decompose the sum $\sum_{n \le x} \Lambda(n) f(n)$ into sums of type I and type II sums of the form
$$
\sum_{\substack{m \le M \\ mn \le x}} a_m f(mn) \qquad \text{and} \qquad \sum_{\substack{K \le m < 2K \\ mn \le x}} a_m b_n f(mn)
$$
respectively where
$$
M \le \max\{x^{1-\beta}, x^{2\beta}\} \qquad \text{and} \qquad x^\beta \le K \le x^{1-\beta}
$$
for some $\beta \in (0,1/2)$. The coefficients $a_m, b_n$ are bounded by $x^\eps$. Usually we choose $\beta = 1/3$.

We also have an alternative decomposition of the von Mangoldt function due to Heath-Brown.

\begin{lemma}\label{lem: Heath-Brown inentity}
Let $z\geqslant1$ be a real number and let $k\geqslant1$ be an integer.
For any integer $n\leqslant2z^k$, there holds
$$
\Lambda(n)=\sum_{j=1}^{k}(-1)^{j-1}\binom{k}{j}
\mathop{\sum_{n_1\cdots n_{2j}=n}}_{n_{j+1},\cdots,n_{2j}\leqslant z}
(\log n_1)\mu(n_{j+1})\cdots\mu(n_{2j}).
$$
\end{lemma}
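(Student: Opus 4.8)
The plan is to prove the identity by matching Dirichlet coefficients in a purely algebraic identity, following Heath-Brown's original argument. I would work in the ring of formal Dirichlet series, where $\zeta(s)=\sum_{n\ge1}n^{-s}$ is invertible with inverse $\sum_{n\ge1}\mu(n)n^{-s}$, so that $-\zeta'(s)/\zeta(s)=\sum_{n\ge1}\Lambda(n)n^{-s}$, and I introduce the truncated M\"obius series $F(s)=\sum_{n\le z}\mu(n)n^{-s}$. (No analytic input is needed; alternatively one may restrict to $\operatorname{Re}s>1$, where every series converges absolutely and $\zeta(s)\ne0$.)

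The first step is the elementary consequence of the binomial theorem that $1-(1-\zeta F)^k=\sum_{j=1}^{k}(-1)^{j-1}\binom{k}{j}(\zeta F)^j$; dividing by $\zeta$ and multiplying by $-\zeta'$ gives
$$-\frac{\zeta'}{\zeta}\Bigl(1-(1-\zeta F)^k\Bigr)=\sum_{j=1}^{k}(-1)^{j-1}\binom{k}{j}(-\zeta')\,\zeta^{\,j-1}F^{j}.$$
The second step is bookkeeping: the coefficient of $n^{-s}$ in $(-\zeta')\zeta^{\,j-1}F^{j}$ is exactly $\sum_{n_1\cdots n_{2j}=n,\ n_{j+1},\dots,n_{2j}\le z}(\log n_1)\mu(n_{j+1})\cdots\mu(n_{2j})$, since $-\zeta'$ contributes the factor $n_1$ with weight $\log n_1$, each of the $j-1$ copies of $\zeta$ contributes one unrestricted factor with weight $1$, and each of the $j$ copies of $F$ contributes one factor $\le z$ with weight $\mu$. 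Thus the right-hand side of the displayed identity, read coefficient by coefficient, is precisely the right-hand side of the lemma.

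It remains to check that the left-hand side has $n$-th coefficient equal to $\Lambda(n)$ for every $n\le 2z^k$. Expanding, $-\tfrac{\zeta'}{\zeta}(1-(1-\zeta F)^k)=-\tfrac{\zeta'}{\zeta}+\tfrac{\zeta'}{\zeta}(1-\zeta F)^k$; the first term already has $n$-th coefficient $\Lambda(n)$, so one only has to show that the correction term $\tfrac{\zeta'}{\zeta}(1-\zeta F)^k$ has vanishing $n$-th coefficient for $n\le 2z^k$. Set $G=1-\zeta F$. Since $(\zeta F)(s)=\sum_n\bigl(\sum_{d\mid n,\ d\le z}\mu(d)\bigr)n^{-s}$ and $\sum_{d\mid n,\ d\le z}\mu(d)=\sum_{d\mid n}\mu(d)=\ind{\{n=1\}}$ whenever $n\le z$, the series $G$ is supported on integers $>z$, hence $G^k$ is supported on integers $>z^k$; and $\zeta'/\zeta$ is supported on integers $\ge2$ (as $\Lambda(1)=0$), so the product $\tfrac{\zeta'}{\zeta}G^k$ is supported on integers $>2z^k$. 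Combining the three steps finishes the proof.

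This argument is short and essentially formal, so there is no serious obstacle; the only points that need care are the justification of the manipulations — cleanest by staying in the ring of formal Dirichlet series — and the combinatorial identification of the coefficient of $(-\zeta')\zeta^{\,j-1}F^{j}$. The one genuinely quantitative observation, and the one responsible for the range $2z^k$ rather than merely $z^k$, is that $\Lambda$ is supported on $n\ge2$, so convolving with $G^k$ picks up an extra factor of at least $2$. One can equally well run the whole proof with Dirichlet convolution of arithmetic functions: writing $\mu_z=\mu\cdot\ind{[1,z]}$, $\Lambda=\log*\mathbf1*\mu$ via $\Lambda=\log*\mu$, $\delta=\mathbf1*\mu$, and $g=\delta-\mathbf1*\mu_z$ supported on $n>z$, expand $\mu=\mu_z*\sum_{0\le i<k}g^{*i}+\mu*g^{*k}$ and simplify using the hockey-stick identity $\sum_{i=l}^{k-1}\binom{i}{l}=\binom{k}{l+1}$; the remainder $\Lambda*g^{*k}$ is supported on $n>2z^k$, giving the same conclusion.
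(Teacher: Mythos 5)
Your proof is correct and is essentially Heath-Brown's original argument, which the paper does not reproduce but simply cites (pp.~1366--1367 of \cite{Heath-Brown}): the identity $-\frac{\zeta'}{\zeta}\bigl(1-(1-\zeta F)^k\bigr)=\sum_{j=1}^{k}(-1)^{j-1}\binom{k}{j}(-\zeta')\zeta^{j-1}F^{j}$ with $F$ the truncated M\"obius series, plus the support observation that $\frac{\zeta'}{\zeta}(1-\zeta F)^k$ lives on integers $>2z^k$. All steps, including the coefficient bookkeeping and the factor of $2$ coming from $\Lambda(1)=0$, check out.
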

\begin{proof}
See the arguments on pp. 1366--1367 of Heath-Brown \cite{Heath-Brown}.
\end{proof}

\begin{lemma} \label{lemmea: balance} Let
$$
L(E)=\sum_{1\leqslant i\leqslant u}A_iE^{a_i}+\sum_{1\leqslant j\leqslant v}B_jE^{-b_j},
$$
where $A_i,B_j,a_i$ and $b_j$ are positive. Let $0\leqslant E_1\leqslant E_2$. Then there is
some $E\in(E_1,E_2]$ such that
$$
L(E)\ll \sum_{1\leqslant i\leqslant u}\sum_{1\leqslant j\leqslant v}\left(A_i^{b_j}B_j^{a_i}\right)^{1/(a_i+b_j)}
+\sum_{1\leqslant i\leqslant u}A_iE_1^{a_i}+\sum_{1\leqslant j\leqslant v}B_jE_2^{-b_j},
$$
where the implied constant only depends on $u$ and $v$.
\end{lemma}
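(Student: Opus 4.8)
The plan is to reduce the inequality to a statement about a single, well-chosen value of $E$ and finitely many monomials. Since $L(E)\le u\max_{1\le i\le u}A_iE^{a_i}+v\max_{1\le j\le v}B_jE^{-b_j}$, it suffices to produce $E\in(E_1,E_2]$ for which each of the two maxima is $O_{u,v}(1)$ times the claimed right-hand side; the implied constant will then be an absolute multiple of $u+v$. We may assume $E_1<E_2$, since otherwise $(E_1,E_2]$ is empty, and then $E_2>0$.

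First I would introduce the envelope functions
$$
f(E)=\max_{1\le i\le u}A_iE^{a_i},\qquad g(E)=\max_{1\le j\le v}B_jE^{-b_j}\qquad(E>0).
$$
As finite maxima of power functions, $f$ is continuous and strictly increasing with $f(0^+)=0$, while $g$ is continuous and strictly decreasing with $g(0^+)=+\infty$ and $g(+\infty)=0$. Hence $f-g$ is continuous and strictly increasing on $(0,\infty)$ from $-\infty$ to $+\infty$, so there is a unique balance point $E^\ast>0$ with $f(E^\ast)=g(E^\ast)=:V$. Choosing indices $i_0,j_0$ attaining these maxima gives $A_{i_0}(E^\ast)^{a_{i_0}}=B_{j_0}(E^\ast)^{-b_{j_0}}=V$, whence $E^\ast=(B_{j_0}/A_{i_0})^{1/(a_{i_0}+b_{j_0})}$ and therefore
$$
V=\bigl(A_{i_0}^{\,b_{j_0}}B_{j_0}^{\,a_{i_0}}\bigr)^{1/(a_{i_0}+b_{j_0})}\le\Sigma_0,
$$
where $\Sigma_0:=\sum_{1\le i\le u}\sum_{1\le j\le v}(A_i^{b_j}B_j^{a_i})^{1/(a_i+b_j)}$ is the first term on the right-hand side of the lemma. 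Thus the balance value is always controlled by $\Sigma_0$.

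Then I would split into three cases according to the location of $E^\ast$. If $E^\ast\in(E_1,E_2]$, take $E=E^\ast$, so $L(E)\le(u+v)V\le(u+v)\Sigma_0$. If $E^\ast>E_2$, take $E=E_2$: monotonicity gives $f(E_2)<f(E^\ast)=V\le\Sigma_0$, while $g(E_2)\le\sum_j B_jE_2^{-b_j}$, so $L(E_2)\ll_{u,v}\Sigma_0+\sum_j B_jE_2^{-b_j}$. If $E^\ast\le E_1$ (which forces $E_1>0$, so $f,g$ are continuous at $E_1$), then $g(E_1)\le g(E^\ast)=V\le\Sigma_0$ and $f(E_1)\le\sum_i A_iE_1^{a_i}$; by continuity we may choose $E\in(E_1,E_2]$ so close to $E_1$ that $f(E)\le 2f(E_1)$ and $g(E)\le 2g(E_1)$, giving $L(E)\ll_{u,v}\Sigma_0+\sum_i A_iE_1^{a_i}$. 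In all three cases the implied constant depends only on $u$ and $v$, which is the assertion.

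The one delicate point is the last case: one cannot take $E=E_1$ itself (it is not in $(E_1,E_2]$, and it may equal $0$), and a fixed multiplicative shift $E=(1+\delta)E_1$ is not admissible because $(1+\delta)^{a_i}$ need not be bounded in terms of $u,v$. The remedy is precisely the continuity argument above: the \emph{point} $E$ is allowed to depend on all the data $A_i,B_j,a_i,b_j,E_1,E_2$, whereas only the \emph{final implied constant} must be uniform, which is exactly what the statement requires. The remaining verifications — continuity and strict monotonicity of $f$ and $g$, and existence and uniqueness of $E^\ast$ by the intermediate value theorem — are routine.
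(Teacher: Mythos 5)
Your argument is correct. Note first that the paper itself gives no proof of this lemma; it simply cites Lemma 2.4 of Graham--Kolesnik, so any self-contained argument is a genuine addition. Your route --- introducing the envelopes $f(E)=\max_i A_iE^{a_i}$ and $g(E)=\max_j B_jE^{-b_j}$, locating the unique balance point $E^\ast$ with $f(E^\ast)=g(E^\ast)=V$ by the intermediate value theorem, identifying $V$ with a single term $(A_{i_0}^{b_{j_0}}B_{j_0}^{a_{i_0}})^{1/(a_{i_0}+b_{j_0})}$ of the double sum, and then clamping $E^\ast$ into $(E_1,E_2]$ by a three-case monotonicity argument --- is sound, and each case does yield a constant depending only on $u$ and $v$. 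This differs in flavour from the standard Graham--Kolesnik proof, which works monomial by monomial: it sets $E_{ij}=(B_j/A_i)^{1/(a_i+b_j)}$, chooses $E$ as a suitably clamped extremal $E_{ij}$, and checks directly that every term $A_iE^{a_i}$ and $B_jE^{-b_j}$ is dominated by one of the three sums on the right. Your version trades that bookkeeping for one application of the intermediate value theorem plus the continuity perturbation in the case $E^\ast\le E_1$, which you correctly identify as the only delicate point (a fixed multiplicative shift would not give a uniform constant). Two small remarks: in that case the factor $2$ is only needed for $f$, since $g$ is decreasing and $g(E)\le g(E_1)$ automatically for $E>E_1$; and the reduction to $E_1<E_2$ is really a statement that the lemma is degenerate (the interval $(E_1,E_2]$ being empty) rather than vacuously true when $E_1=E_2$, but this is an artifact of how the lemma is stated and is harmless in the paper's applications, where $E$ ranges over $(0,N]$ with $N\ge1$.
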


\begin{proof}
See Lemma 2.4 in \cite{GrahamK1991}.
\end{proof}

The following lemma is the famous Weyl-van der Corput inequality, also called the A-process; see \cite[Lemma 2.5]{GrahamK1991}.

\begin{lemma}
\label{lem:A}
Suppose $f(n)$ is a complex valued function and $I$ is an interval such that $f(n) = 0$ if $n \notin I$. If $Q$ is a positive integer then
$$
|\sum_{n \in I} f(n)|^2 \le \frac{|I| + Q}{Q} \sum_{|q|< Q} \big(1 - \frac{q}{Q} \big) \sum_{n \in I} f(n) \overline{f(n-q)}.
$$
\end{lemma}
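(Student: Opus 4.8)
The plan is to run the classical ``shifting'' argument that underlies the $A$-process. Since $f$ vanishes outside $I$, the sum $S:=\sum_{n\in I}f(n)$ coincides with the sum over all integers, and this is unchanged if we replace $n$ by $n+r$ for any fixed $r\in\ZZ$. Averaging over $r=0,1,\dots,Q-1$ therefore gives
$$
QS=\sum_{r=0}^{Q-1}\sum_{n\in\ZZ}f(n+r)=\sum_{n\in\ZZ}g(n),\qquad g(n):=\sum_{r=0}^{Q-1}f(n+r).
$$
The key structural observation is that $g(n)\neq 0$ forces $n+r\in I$ for some $0\le r<Q$, so $g$ is supported on an interval containing at most $|I|+Q$ integers.

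From here I would apply Cauchy--Schwarz to $\sum_n g(n)$, pairing $g$ with the indicator function of its (short) support, to obtain
$$
Q^2|S|^2=\Bigl|\sum_{n\in\ZZ}g(n)\Bigr|^2\le(|I|+Q)\sum_{n\in\ZZ}|g(n)|^2 .
$$
It then remains to expand $\sum_n|g(n)|^2=\sum_{n}\sum_{r=0}^{Q-1}\sum_{s=0}^{Q-1}f(n+r)\overline{f(n+s)}$ and to regroup by the difference $q=r-s$. For each $q$ with $|q|<Q$ there are exactly $Q-|q|$ admissible pairs $(r,s)$, and for each such pair the translation $n\mapsto n-s$ identifies the inner sum with $\sum_n f(n+q)\overline{f(n)}=\sum_{n}f(n)\overline{f(n-q)}$; hence
$$
\sum_{n\in\ZZ}|g(n)|^2=\sum_{|q|<Q}\bigl(Q-|q|\bigr)\sum_{n}f(n)\overline{f(n-q)}=Q\sum_{|q|<Q}\Bigl(1-\frac{|q|}{Q}\Bigr)\sum_{n}f(n)\overline{f(n-q)} .
$$
Combining this with the Cauchy--Schwarz step and dividing by $Q^2$ yields the stated inequality.

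There is no genuine obstacle here: the argument is entirely elementary, and the only points requiring a little care are the bound on the length of the support of $g$ (which is exactly what manufactures the factor $|I|+Q$) and the combinatorics of the substitution $q=r-s$, which produces the triangular weights $1-|q|/Q$. It is also worth recording that the resulting right-hand side is automatically real and nonnegative, being a positive multiple of $\sum_n|g(n)|^2$, so no absolute values are needed around the $q$-sum.
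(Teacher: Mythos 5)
Your proof is correct and is exactly the standard shift-and-Cauchy--Schwarz argument of Graham--Kolesnik (Lemma 2.5), which the paper itself does not reprove but merely cites. One small point worth recording: your derivation yields the weight $1-|q|/Q$, which is the correct form of the inequality; the weight $1-q/Q$ appearing in the paper's statement (without the absolute value) is a typo, since the two differ for negative $q$ and only the former is what the argument produces.
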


\section{Bounds of exponential sums}\label{section: bounds of es}

In order to prove Theorem \ref{theorem: 1}, we first give the following lemma.

\begin{lemma}\label{lemma: a lemmma to estimate S}
Let $d\geqslant 1$ be an integer, and let $L,L_0,z_1,\cdots,z_d,\alpha_1,\cdots,
\alpha_d$ be real numbers such that $L>1$, $L_0\leqslant L$ and
$\alpha_1,\cdots,\alpha_d\in(0,1)$. Then we have
$$
\sum_{L<l\leqslant L+L_0}\mathbf{e}\left(z_1l^{\alpha_1}+\cdots+z_dl^{\alpha_d}\right)
\ll \Delta_0^{\kappa}L^{\lambda}+\Delta_0^{-1},
$$
where
$$
\Delta_0=\Bigg|\sum_{1\leqslant j\leqslant d}\alpha_j\cdots(\alpha_j-s+1)z_jL^{\alpha_j-1}\Bigg|.
$$
\end{lemma}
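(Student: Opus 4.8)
The plan is to deduce Lemma \ref{lemma: a lemmma to estimate S} directly from the exponent pair method (Lemma \ref{lem: exponent pair method}) applied to the function $f(l) = z_1 l^{\alpha_1} + \cdots + z_d l^{\alpha_d}$ on the interval $(L, L+L_0]$. The crux is to verify the hypothesis of Lemma \ref{lem: exponent pair method}, namely that $|f^{(s)}(l)| \asymp Y N^{1-s}$ for a suitable $Y$ with $N = L$ (after accounting for the fact that the interval has length $L_0 \le L$ rather than length $\asymp L$; one subdivides $(L, L+L_0]$ into dyadic-type pieces or simply notes that the exponent pair bound on any subinterval of $[N, 2N]$ is at most the bound on $[N,2N]$, so the length-$L_0$ sum is $\ll$ the full length-$L$ estimate).

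First I would compute $f^{(s)}(l) = \sum_{j} \alpha_j(\alpha_j-1)\cdots(\alpha_j - s + 1) z_j l^{\alpha_j - s}$. The natural choice is $Y = \Delta_0 \cdot L^{s-1} \cdot L^{1-s} \cdot L^{?}$ — more precisely, since $f^{(s)}(l) = L^{\alpha_j - s}$-type terms, one writes $f^{(s)}(l) = \sum_j \alpha_j\cdots(\alpha_j-s+1) z_j L^{\alpha_j-1} \cdot (l/L)^{\alpha_j-s} \cdot L^{1-s}$, so that $f^{(s)}(l) \asymp \Delta_0 L^{1-s}$ provided $l \asymp L$, i.e. $Y = \Delta_0$. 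The subtlety here is that $|f^{(s)}(l)| \asymp \Delta_0 L^{1-s}$ requires the sum defining $f^{(s)}$ not to exhibit cancellation as $l$ ranges over $(L, 2L]$; since $(l/L)^{\alpha_j - s} \in [2^{\alpha_j-s}, 1] \subset [2^{-s}, 1]$ and all these factors are positive and bounded above and below, each term retains its sign and size up to constants, so $|f^{(s)}(l)|$ stays within a constant factor of $|f^{(s)}(L)| = \Delta_0 L^{1-s}$. This is exactly where $\Delta_0$ being an absolute value of the full sum (evaluated at $l = L$) enters, and where one uses that the $\alpha_j$ are fixed so $d$ and the $\alpha_j$ contribute only implied constants.

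With $Y = \Delta_0$ and $N = L$, Lemma \ref{lem: exponent pair method} gives $\sum_{l \sim L} \e(f(l)) \ll \Delta_0^\kappa L^\lambda + \Delta_0^{-1}$, and restricting to the subinterval $(L, L+L_0]$ only improves (or at worst matches) this bound by positivity of the exponent-pair estimate on subintervals — one can make this rigorous by partitioning $[L, 2L]$ into $O(1)$ intervals of the form $[N', 2N']$ with $N' \asymp L$ and applying the lemma on each, or by the standard remark that the van der Corput / exponent pair bound is monotone in the interval length in this regime. The main obstacle, and the only real content, is the sign/size stability of $f^{(s)}$ just discussed: one must be slightly careful that the hypothesis of Lemma \ref{lem: exponent pair method} is a genuine two-sided asymptotic $|f^{(s)}| \asymp Y N^{1-s}$, and that this holds uniformly on $[N, 2N]$ rather than merely at one point; handling the case $\Delta_0 = 0$ (or extremely small) is trivial since then the bound $\Delta_0^{-1}$ is vacuous and one falls back on the trivial bound $\ll L_0 \le L \ll L^\lambda$ after noting $\lambda \ge 1/2$ for every exponent pair, or more simply absorbs it. Once these points are checked the lemma follows immediately.
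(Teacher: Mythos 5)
Your proposal follows exactly the paper's proof: write $f(l)=z_1l^{\alpha_1}+\cdots+z_dl^{\alpha_d}$, observe that $|f^{(s)}(l)|\asymp\Delta_0L^{1-s}$ on the range of summation, and apply Lemma \ref{lem: exponent pair method} with $Y=\Delta_0$ and $N=L$. The paper's own proof is in fact terser than yours --- it asserts the two-sided bound $|f^{(s)}(l)|\asymp\Delta_0L^{1-s}$ with no discussion of uniformity in $l$ or of possible cancellation among the terms --- so the delicate points you flag are glossed over by the authors as well, and your argument matches theirs.
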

\begin{proof}
Write $f(x)=z_1x^{\alpha_1}+\cdots+z_dx^{\alpha_d}$. For any positive integer $s$,
we have
\begin{align*}
|f^{(s)}(l)|=\Bigg|\sum_{1\leqslant j\leqslant d}\alpha_j\cdots(\alpha_j-s+1)
z_jl^{\alpha_j-s}\Bigg|\asymp \Delta_0 L^{1-s}.
\end{align*}
It follows from Lemma \ref{lem: exponent pair method} that
$$
\sum_{L<l\leqslant L+L_0}\mathbf{e}\left(f(l)\right)
\ll \Delta_0^{\kappa}L^{\lambda}+\Delta_0^{-1}.
$$
This proves Lemma \ref{lemma: a lemmma to estimate S}.
\end{proof}

Now we return to the proof of Theorem \ref{theorem: 1}.
Applying the Cauchy inequality we have
\begin{align*}
X^{-2\varepsilon}|\cS|^2\ll M\sum_{m\sim M}\Bigg|\mathop{\sum_{n\sim N}}_{mn\sim X}b_n
\mathbf{e}\left(\sum_{1\leqslant j\leqslant d}
\frac{E_jm^{\beta_j}n^{\gamma_j}}{M^{\beta_j}N^{\gamma_j}}\right)\Bigg|^2.
\end{align*}
Let $Q\leqslant N$ be a positive real number. The Weyl-van der Corput inequality
(Lemma \ref{lem:A}) yields
\begin{align*}
X^{-2\varepsilon}|\cS|^2&\ll \frac{NM}{Q}\sum_{|q|<Q}\Big(1-\frac{q}{Q}\Big)
\sum_{n\sim N}b_{n+q}\overline{b_n}\mathop{\sum_{m\sim M}}_{mn\sim X}
\mathbf{e}\Bigg(\sum_{1\leqslant j\leqslant d}\frac{E_jm^{\beta_j}((n+q)^{\gamma_j}-n^{\gamma_j})}
{M^{\beta_j}N^{\gamma_j}}\Bigg)\nonumber\\
&\ll\frac{X^{2+2\varepsilon}}{Q}+\frac{X^{1+2\varepsilon}}{Q}\sum_{|q|<Q}\sum_{n\sim N}
\Bigg|\mathop{\sum_{m\sim M}}_{mn\sim X}
\mathbf{e}\Bigg(\sum_{1\leqslant j\leqslant d}\frac{E_j((n+q)^{\gamma_j}-n^{\gamma_j})}
{M^{\beta_j}N^{\gamma_j}}m^{\beta_j}\Bigg)\Bigg|.
\end{align*}

Now we estimate the innermost sum. Recall that
$$
\Delta=\Bigg|\sum_{1\leqslant j\leqslant d}\beta_j\cdots(\beta_j-s+1)E_j\Bigg|.
$$
For a fixed integer $n\sim N$, denote
\begin{align*}
f(m)=\sum_{1\leqslant j\leqslant d}\frac{E_j((n+q)^{\gamma_j}-n^{\gamma_j})}
{M^{\beta_j}N^{\gamma_j}}m^{\beta_j}.
\end{align*}
Since
$$
f^{(s)}(M)\asymp
\Bigg|\sum_{1\leqslant j\leqslant d}\beta_j\cdots(\beta_j-s+1)
\frac{E_j((n+q)^{\gamma_j}-n^{\gamma_j})}{M^{\beta_j}N^{\gamma_j}}M^{\beta_j-1}\Bigg|
\asymp \Delta|q|X^{-1}.
$$
In view of Lemma \ref{lem: exponent pair method} we derive that
\begin{align*}
\mathop{\sum_{m\sim M}}_{mn\sim X}\mathbf{e}(f(m))
\ll (\Delta|q|X^{-1})^{\kappa}M^{\lambda}+(\Delta|q|X^{-1})^{-1}.
\end{align*}
Therefore,
\begin{align*}
X^{-5\varepsilon}|\cS|^2\ll X^2Q^{-1}+\Delta^{-1}X^3M^{-1}Q^{-1}+
\Delta^{\kappa}X^{2-\kappa}M^{\lambda-1}Q^{\kappa}.
\end{align*}

This bound is trivial when $Q<1$, so we may consider the case that $Q\in(0,N]$.
Lemma \ref{lemmea: balance} yields
\begin{align*}
X^{-3\varepsilon}\cS\ll M^{1/2}X^{1/2}&+\Delta^{-1/2}X
+M^{(\lambda-\kappa-1)/(2\kappa+2)}X\\
&+\Delta^{\kappa/(2\kappa+2)}M^{(\lambda-1)/(2\kappa+2)}X^{(2+\kappa)/(2\kappa+2)}.
\end{align*}
This completes the proof of Theorem \ref{theorem: 1}.

\section{A lattice method}\label{subsection: polar Lattice}

We review a few terms related to lattice theory. Let $E$ be a subspace of $\RR^n$ having dimension $t$ with $1\leqslant t\leqslant n$. A lattice in $E$ is a subgroup $\Lambda$ of $E$ of the form
$$
\Lambda = \{ m_1z_1 + m_2z_2 + \cdots + m_tz_t : m_i\in \ZZ \},
$$
where $z_1,\cdots,z_t$ are given linearly independent points in $E$. We also say $z_1,\cdots,z_t$ is a basis of $\Lambda$ or generates $\Lambda$. The fundamental parallelepiped (or fundamental domain) for $\Lambda$ corresponding to this basis is the set
$$
\cF = \{ t_1z_1 + t_2z_2 + \cdots + t_nz_n : 0\leqslant t_i < 1 \}.
$$
The $n$-dimensional volume of $\cF$ is called the determinant of $L$, denoted by $\det(\Lambda)$. The polar lattice to $\Lambda$ is the lattice $\Pi$ of points $\pi$ satisfying
$$
\pi \lambda \in \ZZ, \quad \forall \lambda \in \Lambda.
$$


We also need the following lemma which is \cite[Lemma~2.4]{Baker1986}.

\begin{lemma}
	\label{lem:fourier analysis}
	Given any interval $I=[a,b]$, let $\chi_I(x)$ be the indicator function of $I$. There are continuous functions $G_1(x), G_2(x)$ in $L^1(\RR)$ such that
	\begin{align*}
		G_1(x) \leqslant \chi_I(x) \leqslant G_2(x), \\
		\hat{G}_1(t)=\hat{G}_2.(t)=0 \quad \text{for}\ |t|\geqslant 1,
	\end{align*}
	and
	\begin{align*}
		\int_{-\infty}^{+\infty}(\chi_I(x)-G_1(x))\textup{d}x = 1, \quad \int_{-\infty}^{+\infty}(G_2(x)-\chi_I(x))\textup{d}x = 1.
	\end{align*}
\end{lemma}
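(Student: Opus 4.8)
The plan is to realise $G_1$ and $G_2$ through the classical Beurling--Selberg extremal functions. I would start from the Beurling function
$$
H(z)=\Bigl(\frac{\sin\pi z}{\pi}\Bigr)^{2}\Bigl(\frac{2}{z}+\sum_{n=0}^{\infty}\frac{1}{(z-n)^{2}}-\sum_{n=1}^{\infty}\frac{1}{(z+n)^{2}}\Bigr),
$$
and use the facts (which I would cite, or reprove via a residue computation for the type together with an explicit evaluation of the normalising integral) that $H$ extends to an entire function of exponential type $2\pi$, that $H$ is real-valued and continuous on $\RR$ with $H(x)\ge\operatorname{sgn}(x)$ for every $x$, that $\int_{\RR}(H(x)-\operatorname{sgn}(x))\dd x=1$, and that $H(x)-\operatorname{sgn}(x)\ll(1+|x|)^{-2}$ (the decay following, e.g., from the reflection identity $H(z)+H(-z)=2\pi^{-2}z^{-2}\sin^{2}\pi z$).

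Fixing $I=[a,b]$, I would then set
$$
G_{2}(x)=\tfrac12\bigl(H(x-a)+H(b-x)\bigr),\qquad G_{1}(x)=-\tfrac12\bigl(H(a-x)+H(x-b)\bigr).
$$
Both functions are continuous (as $H$ is entire) and, by the quadratic decay of $H-\operatorname{sgn}$, satisfy $G_{1}(x),G_{2}(x)\ll(1+|x|)^{-2}$, hence lie in $L^{1}(\RR)$. The sandwich $G_{1}\le\chi_{I}\le G_{2}$ I would check by splitting into the regions $x<a$, $a<x<b$, $x>b$: for instance, when $a<x<b$ both arguments of $H$ inside $G_{2}$ are positive, so $G_{2}(x)\ge1=\chi_{I}(x)$, while when $x>b$ one has $H(x-a)\ge1$ and $H(b-x)\ge-1$, so $G_{2}(x)\ge0=\chi_{I}(x)$; the case $x<a$ is symmetric, and $G_{1}\le\chi_{I}$ is the same argument with the signs of the arguments reversed.

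For the spectral condition, $G_{2}$ is the restriction to $\RR$ of an entire function of exponential type $\le 2\pi$ and lies in $L^{1}(\RR)$ (indeed in $L^{1}\cap L^{\infty}$), so the Paley--Wiener theorem gives $\operatorname{supp}\hat G_{2}\subseteq[-1,1]$; since $\hat G_{2}$ is continuous, being the transform of an $L^{1}$ function, it also vanishes at $\pm1$, whence $\hat G_{2}(t)=0$ for $|t|\ge1$, and likewise for $G_{1}$. Finally, the two integral identities come out of writing each shifted copy of $H$ as $\operatorname{sgn}+(H-\operatorname{sgn})$, invoking $\int_{\RR}(H-\operatorname{sgn})=1$, and collapsing the remaining $\operatorname{sgn}$-terms via the almost-everywhere identities $\operatorname{sgn}(x-a)+\operatorname{sgn}(b-x)=2\chi_{(a,b)}(x)$ and $\operatorname{sgn}(a-x)+\operatorname{sgn}(x-b)=-2\chi_{(a,b)}(x)$, which after a short computation yields $\int_{\RR}(G_{2}-\chi_{I})=\int_{\RR}(\chi_{I}-G_{1})=1$. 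The one genuinely delicate input is the package of properties of the Beurling function $H$ itself --- that its exponential type is exactly $2\pi$, that $\int(H-\operatorname{sgn})$ equals the extremal value $1$, and the $(1+|x|)^{-2}$ decay; once these are in hand the Selberg reduction above is routine, and in practice one simply cites \cite{Baker1986} (or the work of Beurling and Selberg, or Vaaler's survey) for the whole statement.
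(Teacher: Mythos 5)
Your construction is correct: the Beurling--Selberg majorant/minorant $G_2(x)=\tfrac12(H(x-a)+H(b-x))$, $G_1(x)=-\tfrac12(H(a-x)+H(x-b))$, together with the reflection identity giving the quadratic decay, the Paley--Wiener support argument, and the two integral evaluations, is exactly the classical argument behind this statement. The paper itself offers no proof --- it simply quotes the result as \cite[Lemma~2.4]{Baker1986} --- so your write-up supplies the standard proof of the cited lemma rather than diverging from anything in the paper.
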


Now we give the proof of Theorem \ref{theorem: polar}. Let $\chi_d$ denote the indicator function of $\cB_d$. By lemma~\ref{lem:fourier analysis}, we know there exists two continuous functions $\chi^{\pm}$ in $L^1(\RR)$ satisfying
\begin{align*}
	\chi^-(x) \leqslant &\chi_1(x) \leqslant \chi^+(x), \\
	\hat{\chi}^{\pm}(t)=&0 \quad \text{for}\ |t|\geqslant w
\end{align*}
and
$$
\int_{-\infty}^{+\infty}(\chi_1(x)-\chi^-(x))\textup{d}x
=   \int_{-\infty}^{+\infty}(\chi^+(x)-\chi_1(x))\textup{d}x = \frac{1}{w}.
$$
Here and below, the symbol $\hat{g}$ denotes the Fourier transform defined on
$L^{1}(\mathbb{R}^d)$ as
$$
\hat{g}(\boldsymbol{t})=\int_{\mathbb{R}^d}g(\boldsymbol{x})\mathbf{e}(-\boldsymbol{xt})
\textup{d}\boldsymbol{x},
$$
and the parameter $w$ in $(1,+\infty)$ is at our disposal.

Let
$$
\sigma(j,k) = \left\{
\begin{aligned}
	- \quad \text{if}\ j=k, \\
	+ \quad \text{if}\ j\neq k,
\end{aligned}\right. \quad \text{and} \quad
\tau(j,k) = \left\{
\begin{aligned}
	0 \quad \text{if}\ j=k, \\
	+ \quad \text{if}\ j\neq k.
\end{aligned}\right.
$$
We claim that
\begin{align}
	\label{1.1}
	\chi_d(\boldsymbol{x}) \geqslant \sum_{k=1}^{d}\prod_{j=1}^{d}\chi^{\tau(j,k)}(x_j) - (d-1)\prod_{j=1}^{d}\chi^+(x_j),
\end{align}
here $\chi^0(x_j)$ means $\chi_1(x_j)$. To see this, first note that the right-hand side of \eqref{1.1} is clearly non-positive if one of $\chi^0(x_j)$ is zero. If every $\chi^0(x_j)$ is $1$ then the right-hand side can be shown not to exceed $1$ by induction. Hence, it follows that
$$
\chi_d(\boldsymbol{x}) \geqslant \sum_{k=1}^{d}\prod_{j=1}^{d}\chi^{\sigma(j,k)}(x_j) - (d-1)\prod_{j=1}^{d}\chi^+(x_j).
$$
Put
$$
g(\boldsymbol{x}) = \sum_{k=1}^{d}\prod_{j=1}^{d}\chi^{\sigma(j,k)}(x_j) - (d-1)\prod_{j=1}^{d}\chi^+(x_j),
$$
It is obvious that
$$
\hat{g}(\boldsymbol{t}) = 0, \quad \text{when}\ |\boldsymbol{t}|\geqslant w.
$$
Also we obtain that
\begin{align*}
	\hat{g}(\boldsymbol{0}),&=d\int_{\RR^d} \bigg( \prod_{j=1}^{d}\chi^{\sigma(j,k)}(x_j) - \chi_1(x_j) + \chi_1(x_j) \bigg)
	\textup{d}\boldsymbol{x} \\
	&\qquad - (d-1)\int_{\RR^d} \bigg( \prod_{j=1}^{d}\chi^{+}(x_j) - \chi_1(x_j) + \chi_1(x_j) \bigg)
	\textup{d}\boldsymbol{x} \\
	&=d\big(2+\frac{1}{w}\big)^{d-1}\big(2-\frac{1}{w}\big) - (d-1)\big( 2+\frac{1}{w} \big)^{d} = 2^d + O(w^{-1}).
\end{align*}

Let $T(\boldsymbol{x})$ be the function
$$
T(\boldsymbol{x}) = \sum_{\boldsymbol{y} \in \Gamma} g(\boldsymbol{x}+\boldsymbol{y}).
$$
We have the Fourier expansion
$$
T(\boldsymbol{x}) = \sum_{\substack{\boldsymbol{q}\in\Pi \\ |\boldsymbol{q}|<w}}
c_{\boldsymbol{q}}\mathbf{e}(\boldsymbol{qx})
$$
with
$$
c_{\boldsymbol{q}}= \frac{1}{\det \Gamma} \int_{\cF} T(\boldsymbol{x})\mathbf{e}(\boldsymbol{qx})
\textup{d}\boldsymbol{x},
$$
where $\cF$ denotes the fundamental domain for $\Gamma$. By the definition of $T(x)$, it follows that
\begin{align*}
	c_{\boldsymbol{q}}
	&= \frac{1}{\det \Gamma} \sum_{\boldsymbol{y} \in \Gamma}
	\int_{\cF} g(\boldsymbol{x}+\boldsymbol{y}) \mathbf{e}(-\boldsymbol{qx})\textup{d}\boldsymbol{x} \\
	&= \frac{1}{\det \Gamma} \sum_{\boldsymbol{y} \in \Gamma} \int_{\cF + \boldsymbol{y}}
	g(\boldsymbol{x}) \mathbf{e}(-\boldsymbol{qx} + \boldsymbol{qy})\textup{d}\boldsymbol{x}.
\end{align*}
Since $\boldsymbol{q}\in\Pi$ and $\Pi$ is the polar lattice of $\Gamma$, we have $\boldsymbol{qy}\in\ZZ$. Noting that $\cF+\boldsymbol{y}$ is a translation of $\cF$, we achieve that
$$
c_{\boldsymbol{q}} = \frac{1}{\det \Gamma} \int_{\RR^d} g(\boldsymbol{x})
\mathbf{e}(-\boldsymbol{qx})\textup{d}\boldsymbol{x}
=  \frac{1}{\det \Gamma} \hat{g}(\boldsymbol{x}).
$$

Next, we continue to prove this theorem for the case that $a_n\in\mathbb{R}$.
In this case, we can suppose that
$$
\mathcal{A}_1=\{1\leqslant n\leqslant N: a_n\geqslant 0\},\ \ \
\mathcal{A}_2=\{1\leqslant n\leqslant N: a_n<0\}.
$$
One knows that $\alpha_n \in \cB_d \pmod{\Gamma}$ if and only if there exists a
$\boldsymbol{y} \in \Gamma$ such that
$$
\chi_d(\boldsymbol{\alpha}_n + \boldsymbol{y}) = 1.
$$
Then
\begin{align}\label{eq: A1+A2}
	\mathcal{N}=\sum_{n\in\mathcal{A}_1}a_n\sum_{\boldsymbol{y}\in\Gamma}
	\chi_d(\boldsymbol{\alpha}_n+\boldsymbol{y})
	+\sum_{n\in\mathcal{A}_2}a_n\sum_{\boldsymbol{y}\in\Gamma}
	\chi_d(\boldsymbol{\alpha}_n+\boldsymbol{y}).
\end{align}
By the definition of $T(\boldsymbol{x})$ we have
\begin{align*}
	\sum_{n\in\mathcal{A}_1}a_n\sum_{\boldsymbol{y}\in\Gamma}
	\chi_d(\boldsymbol{\alpha}_n+\boldsymbol{y})
	&\geqslant \sum_{n\in\mathcal{A}_1}a_n T(\boldsymbol{\alpha}_n)\\
	&=c_{\boldsymbol{0}} \sum_{n\in\mathcal{A}_1} a_n+
	\sum_{\substack{\boldsymbol{q}\in\Pi \\ 0<|\boldsymbol{q}|<w}} c_{\boldsymbol{q}}
	\sum_{n\in\mathcal{A}_1}a_n \mathbf{e}(\boldsymbol{q}\boldsymbol{\alpha}_n).
\end{align*}
Since the Fourier transform of any product $\prod_{j=1}^d \chi^{\pm}(x_j)$ is bounded in modulus by $(2+1/w)^d$, we have
$$
\frac{|c_{\boldsymbol{p}}|}{c_{\boldsymbol{0}}}
= \frac{\hat{g}(\boldsymbol{q})}{\hat{g}(\boldsymbol{0})}
\leqslant \frac{(2d-1)(2+1/w)^d}{2^d+O(1/w)} \ll 1
$$
for large $w$. Thus we have
\begin{align*}
	\sum_{n\in\mathcal{A}_1}a_n\sum_{\boldsymbol{y}\in\Gamma}
	&\chi_d(\boldsymbol{\alpha}_n+\boldsymbol{y})\\
	&\geqslant  (2^d + O(w^{-1})) \bigg( \sum_{n\in\mathcal{A}_1} a_n
	+ O\Bigg( \sum_{\substack{\boldsymbol{q}\in\Pi \\ 0<|\boldsymbol{q}|<w}}
	\Bigg| \sum_{n\in\mathcal{A}_1}a_n \mathbf{e}(\boldsymbol{q} \boldsymbol{\alpha}_n) \bigg| \Bigg) \Bigg).
\end{align*}
An exactly similar upper bound follows from
$$
\chi_d(x) \leqslant \prod_{j=1}^d \chi^+(x_j),
$$
and we conclude that
\begin{align*}
	\sum_{n\in\mathcal{A}_1}a_n\sum_{\boldsymbol{y}\in\Gamma}
	&\chi_d(\boldsymbol{\alpha}_n+\boldsymbol{y})\\
	&=(2^d + O(w^{-1})) \bigg( \sum_{n\in\mathcal{A}_1} a_n
	+ O\Bigg( \sum_{\substack{\boldsymbol{q}\in\Pi \\ 0<|\boldsymbol{q}|<w}}
	\Bigg| \sum_{n\leqslant N}a_n \mathbf{e}(\boldsymbol{q} \boldsymbol{\alpha}_n) \bigg| \Bigg) \Bigg).
\end{align*}
For the case that $n\in\mathcal{A}_2$,
applying the same method we can also obtain the above asymptotic formula. Therefore it follows from (\ref{eq: A1+A2}) that
\begin{align}\label{eq: cN}
	\cN = \frac{1}{\det \Gamma} \bigg( (2^d + O(w^{-1})) \sum_{n\leqslant N}a_n
	+ O\bigg( \sum_{\substack{\boldsymbol{q}\in\Pi \\ 0<|\boldsymbol{q}|<w}}
	\bigg| \sum_{n\leqslant N}a_n\mathbf{e}(\boldsymbol{q} \boldsymbol{\alpha}_n) \bigg| \bigg) \bigg).
\end{align}
If $f(n)$ are complex weights, we write $a_n=\mu_n+\textup{i}\nu_n$, where $\textup{i}^2=-1$. We have
$$
\mathcal{N}=\sum_{1\leqslant n\leqslant N}\mu_n+\textup{i}\sum_{1\leqslant n\leqslant N}\nu_n.
$$
Continuing with the above process, we also conclude (\ref{eq: cN}).
This completes the proof of Theorem \ref{theorem: polar}.

\section{The intersection of Piatetski-Shapiro sequences}
\label{section: Sums of an arithmetic function}

For a given real $c$ with $1<c<2$, we denote by $\mathbf{1}_{c}(\cdot)$ the characteristic
function of numbers in the Piatetski-Shapiro sequence $\mathcal{N}^c$, i.e.,
$$
\mathbf{1}_c(m)=\left\{\begin{array}{ll} 1,
& \hbox{if\ } m\in\mathcal{N}^c, \\
0, & \hbox{if\ } m\not\in\mathcal{N}^c.
\end{array}\right.
$$
A series of literature (for example, see \cite[Lemma 2]{BackerBBSW2013}) provides the following result.

\begin{lemma} \label{lemma: character of P-S}
Let $c\in(1,2)$ and denote $\gamma=1/c$. Assume that $f$ is an arithmetic
function such that $f(m)\ll m^{\varepsilon}$. Then for any real numbers $1\leqslant M_1<M_2$ we have
\begin{align*}
\sum_{M_1<m\leqslant M_2}f(m)\mathbf{1}_c(m)&
=\gamma\sum_{M_1<m\leqslant M_2}f(m)m^{\gamma-1}\\
&+\sum_{M_1<m\leqslant M_2}f(m)\left(\psi(-(m+1)^\gamma)-\psi(-m^{\gamma})\right)+O(1).
\end{align*}
\end{lemma}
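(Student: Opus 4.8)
The plan is to rewrite the characteristic function $\mathbf{1}_c(m)$ as a count of integers in a short interval, translate that count into floor functions, and then apply the defining identity $\psi(x)=x-\fl{x}-\tfrac12$. First I would note that, for a positive integer $m$, one has $m\in\mathcal{N}^c$ precisely when there is an integer $n$ with $\fl{n^c}=m$, i.e.\ $m\le n^c<m+1$, i.e.\ $m^{\gamma}\le n<(m+1)^{\gamma}$. Since $\gamma=1/c\in(1/2,1)$, the mean value theorem gives $(m+1)^{\gamma}-m^{\gamma}=\gamma\xi^{\gamma-1}<\gamma<1$ for some $\xi\in(m,m+1)$, so the interval $[m^{\gamma},(m+1)^{\gamma})$ contains at most one integer; consequently $\mathbf{1}_c(m)$ equals the number of integers in this interval, with no risk of overcounting.

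Next I would evaluate that count. For reals $a<b$ the number of integers $n$ with $a\le n<b$ equals $\fl{-a}-\fl{-b}$, hence
$$
\mathbf{1}_c(m)=\fl{-m^{\gamma}}-\fl{-(m+1)^{\gamma}}.
$$
Substituting $\fl{x}=x-\psi(x)-\tfrac12$ at $x=-m^{\gamma}$ and at $x=-(m+1)^{\gamma}$, the halves cancel and one obtains
$$
\mathbf{1}_c(m)=(m+1)^{\gamma}-m^{\gamma}+\psi\bigl(-(m+1)^{\gamma}\bigr)-\psi\bigl(-m^{\gamma}\bigr).
$$
Multiplying by $f(m)$ and summing over $M_1<m\le M_2$ splits the sum into the $\psi$-part, which already has the required shape, and the main part $\sum_{M_1<m\le M_2}f(m)\bigl((m+1)^{\gamma}-m^{\gamma}\bigr)$.

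For the main part I would Taylor-expand $(m+1)^{\gamma}-m^{\gamma}=\gamma m^{\gamma-1}+O(m^{\gamma-2})$, so that it equals $\gamma\sum_{M_1<m\le M_2}f(m)m^{\gamma-1}$ plus an error bounded by $\sum_{M_1<m\le M_2}|f(m)|\,m^{\gamma-2}\ll\sum_{m\ge1}m^{\gamma-2+\varepsilon}$, which converges to $O(1)$ because $\gamma<1$ and $\varepsilon$ may be taken with $\gamma+\varepsilon<1$. Collecting the three pieces yields the claimed formula.

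There is no substantive obstacle here; the lemma is essentially bookkeeping. The only points needing a little care are the verification that $[m^{\gamma},(m+1)^{\gamma})$ has length strictly below $1$ for every $m\ge1$ (so that the indicator is literally the integer count), and the check that the error term is genuinely $O(1)$ uniformly in $M_1,M_2$, which reduces to the convergence of $\sum_{m\ge1} m^{\gamma-2+\varepsilon}$.
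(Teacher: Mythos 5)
Your proof is correct, and it is exactly the standard argument behind this lemma: the paper itself gives no proof, citing \cite[Lemma 2]{BackerBBSW2013}, and the derivation there proceeds the same way — identify $\mathbf{1}_c(m)$ with the count of integers in $[m^{\gamma},(m+1)^{\gamma})$, express that count as $\fl{-m^{\gamma}}-\fl{-(m+1)^{\gamma}}$, substitute $\fl{x}=x-\psi(x)-\tfrac12$, and absorb the Taylor remainder $O(m^{\gamma-2+\varepsilon})$ into $O(1)$. Your two flagged points of care (interval length below $1$, uniform convergence of the error sum) are handled correctly.
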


Here we give an analogue result when the variable belongs to the intersection of
Piatetski-Shapiro sequences.

\begin{proposition}\label{theorem: characterization} Let $f$
be an arithmetic function, and let $1<c_1,\cdots,c_d<2$ be real numbers.
Denote $\gamma_j=1/c_j$ and $\sigma_j=1-\gamma_j$ for $j=1,\cdots,d$.
Let $x>0$ be a real number, $\mathcal{L}= \log x$, $1<a<1+\mathcal{L}^{-1}$ and let
$$
\mathcal{M}=\mathcal{M}(c_1,\cdots,c_d,f,x)
=\sum_{x< m\leqslant ax}f(m)\mathbf{1}_{c_1}(m)\cdots \mathbf{1}_{c_d}(m).
$$
We have
\begin{align*}
\mathcal{M}=\gamma_1\cdots \gamma_d&x^{-(\delta_1+\cdots+\delta_d)}\Bigg(
(1+O(\mathcal{L}^{-1}))\sum_{x< m\leqslant ax}f(m)\\
&+O\Bigg(\mathop{\sum_{\boldsymbol{h}\in\mathbb{Z}^d\setminus\{\boldsymbol{0}\}}}
_{|h_j|<\mathcal{L}x^{\delta_j}}\Bigg|\sum_{x< m\leqslant ax}f(m)
\mathbf{e}\left(h_1m^{\gamma_1}+\cdots+h_dm^{\gamma_d}\right)\Bigg|\Bigg)\Bigg),
\end{align*}
where $\boldsymbol{h}=(h_1,\cdots,h_d)$.
\end{proposition}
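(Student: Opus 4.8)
The plan is to obtain Proposition~\ref{theorem: characterization} from the lattice method of Theorem~\ref{theorem: polar}, with $\Gamma$ a suitable rectangular lattice in $\RR^{d}$; this is the multidimensional analogue of Lemma~\ref{lemma: character of P-S}, which is essentially the $d=1$ case written before $\psi$ is expanded into a Fourier series. Throughout put $\delta_{j}=1-\gamma_{j}$ $(=\sigma_{j})$. Since $\gamma_{j}=1/c_{j}\in(1/2,1)$, the interval $[m^{\gamma_{j}},(m+1)^{\gamma_{j}})$ has length $\ell_{j}(m):=(m+1)^{\gamma_{j}}-m^{\gamma_{j}}<1$ and therefore meets $\ZZ$ in at most one point, so $\mathbf{1}_{c_{j}}(m)$ is the indicator that this interval contains an integer. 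An elementary computation (or the exact identity $\mathbf{1}_{c_{j}}(m)=\ell_{j}(m)+\psi(-(m+1)^{\gamma_{j}})-\psi(-m^{\gamma_{j}})$ behind Lemma~\ref{lemma: character of P-S}) shows this happens precisely when $\|m^{\gamma_{j}}+\tfrac12\ell_{j}(m)\|\le\tfrac12\ell_{j}(m)$, up to a lower-dimensional boundary that matters only for those $m$ with $m^{\gamma_{j}}$ or $(m+1)^{\gamma_{j}}$ an integer. On the short range $x<m\le ax$, where $a<1+\cL^{-1}$, one has $\ell_{j}(m)=\rho_{j}\bigl(1+O(\cL^{-1})\bigr)$ with $\rho_{j}:=\gamma_{j}x^{-\delta_{j}}$.

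First I would apply Theorem~\ref{theorem: polar} with the index running over the integers $m\in(x,ax]$, weights $a_{m}:=f(m)$, box $\cB_{d}=[-1,1]^{d}$, lattice $\Gamma:=\bigoplus_{j=1}^{d}R_{j}\ZZ$ with $R_{j}:=2/\rho_{j}$, and points $\boldsymbol\alpha_{m}:=\bigl(R_{1}(m^{\gamma_{1}}+\tfrac12\rho_{1}),\dots,R_{d}(m^{\gamma_{d}}+\tfrac12\rho_{d})\bigr)$, so that $\boldsymbol\alpha_{m}\in\cB_{d}\pmod{\Gamma}$ is exactly the conjunction of $\|m^{\gamma_{j}}+\tfrac12\rho_{j}\|\le\tfrac12\rho_{j}$, $1\le j\le d$. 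Here $\det\Gamma=\prod_{j}R_{j}=2^{d}(\gamma_{1}\cdots\gamma_{d})^{-1}x^{\delta_{1}+\cdots+\delta_{d}}$, so the main term $(\det\Gamma)^{-1}(2^{d}+O(w^{-1}))\sum_{m}a_{m}$ equals $\gamma_{1}\cdots\gamma_{d}\,x^{-(\delta_{1}+\cdots+\delta_{d})}(1+O(w^{-1}))\sum_{x<m\le ax}f(m)$. The polar lattice is $\Pi=\bigoplus_{j}R_{j}^{-1}\ZZ$; writing $\boldsymbol q\in\Pi$ as $q_{j}=h_{j}/R_{j}$ with $h_{j}\in\ZZ$ gives $\boldsymbol q\boldsymbol\alpha_{m}=\sum_{j}h_{j}(m^{\gamma_{j}}+\tfrac12\rho_{j})$, whence $\bigl|\sum_{m}f(m)\e(\boldsymbol q\boldsymbol\alpha_{m})\bigr|=\bigl|\sum_{m}f(m)\e(h_{1}m^{\gamma_{1}}+\cdots+h_{d}m^{\gamma_{d}})\bigr|$, the shift $\e(\tfrac12\sum_{j}h_{j}\rho_{j})$ being of modulus one; choosing $w=\tfrac12\bigl(\min_{j}\gamma_{j}\bigr)\cL$, which is $\asymp\cL$ and $>1$, the constraint $0<|\boldsymbol q|<w$ forces $\boldsymbol h\neq\boldsymbol{0}$ and $|h_{j}|<\cL x^{\delta_{j}}$. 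Consequently $(\det\Gamma)^{-1}$ times the error sum of Theorem~\ref{theorem: polar} is at most $\gamma_{1}\cdots\gamma_{d}\,x^{-(\delta_{1}+\cdots+\delta_{d})}$ times the sum over $\boldsymbol h\in\ZZ^{d}\setminus\{\boldsymbol{0}\}$, $|h_{j}|<\cL x^{\delta_{j}}$, displayed in the statement, and $w^{-1}=O(\cL^{-1})$; this already gives the claimed formula for the count $\cN$ produced by Theorem~\ref{theorem: polar}, and it remains only to identify $\cN$ with $\cM$.

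That identification is the one step requiring genuine care, and I expect it to be the main obstacle: $\mathbf{1}_{c_{j}}(m)$ is the indicator of the box with the \emph{$m$-dependent} half-width $\tfrac12\ell_{j}(m)$, whereas above I used the fixed half-width $\tfrac12\rho_{j}$. Since $\ell_{j}(m)=\rho_{j}(1+O(\cL^{-1}))$ on $(x,ax]$, I would control the difference by a sandwich: when $f\ge0$ (the case arising where this proposition is used, with $f$ essentially a truncation of $\Lambda$) one has $\cN^{-}\le\cM\le\cN^{+}$, where $\cN^{\pm}$ come from Theorem~\ref{theorem: polar} applied with the boxes of half-widths $\tfrac12\rho_{j}^{\pm}$, the $\rho_{j}^{\pm}=\gamma_{j}x^{-\delta_{j}}(1+O(\cL^{-1}))$ bracketing $\ell_{j}(m)$; because the main term of Theorem~\ref{theorem: polar} is linear in the volume of the box, $\cN^{+}-\cN^{-}\ll\gamma_{1}\cdots\gamma_{d}\,x^{-(\delta_{1}+\cdots+\delta_{d})}\bigl(\cL^{-1}\sum_{x<m\le ax}f(m)+\sum_{\boldsymbol h}\bigl|\sum_{m}f(m)\e(h_{1}m^{\gamma_{1}}+\cdots+h_{d}m^{\gamma_{d}})\bigr|\bigr)$, which is absorbed into the error term already present. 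For general complex $f$ one first splits $f$ into $O(1)$ non-negative real pieces and argues on each. Finally the finitely many (or, if some $\gamma_{j}\in\QQ$, the $O(x^{1/2})$ many) values of $m$ with $m^{\gamma_{j}}$ or $(m+1)^{\gamma_{j}}\in\ZZ$ for some $j$ — all of which lie in $\mathcal N^{c_{j}}$ for that $j$ — contribute only a lower-order amount, swallowed by the main term in the range where the proposition is applied; bookkeeping the various $O(\cL^{-1})$ relative errors then yields the stated asymptotic.
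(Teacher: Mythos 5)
Your proposal is correct and follows essentially the same route as the paper: both reduce $\mathbf{1}_{c_1}(m)\cdots\mathbf{1}_{c_d}(m)$ to a box condition modulo a rectangular lattice, apply Theorem~\ref{theorem: polar} with $w\asymp\cL$, and sandwich $\cM$ between two applications with box half-widths differing by a factor $1+O(\cL^{-1})$ (the paper's ``necessary condition'' and ``sufficient condition'') so that the main terms agree. Your treatment is if anything slightly more careful than the paper's on the sign of $f$ (needed for the sandwich inequalities) and on the boundary values of $m$ with $(m+1)^{\gamma_j}\in\ZZ$, neither of which the paper addresses explicitly.
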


Such an asymptotic formula is derived from Baker \cite[P. 351]{Baker2014},
who provided the corresponding result when $f(n)=\Lambda(n)$. In this section,
our main purpose is to show that it also holds for any arithmetic
function $f(n)$. We shall show some necessary definitions and properties regarding to our lattice method
in Section \ref{subsection: polar Lattice} and prove Proposition \ref{theorem: characterization}. We also remark that our proof methods are mainly taken from Baker \cite[Section 2]{Baker2014}.

\begin{proof}[Proof of Proposition \ref{theorem: characterization}]
We start by finding a necessary condition and a sufficient condition of the intersection of Piatetski-Shapiro sequences. Then by Theorem~\ref{theorem: polar}, we obtain a lower bound and a upper bound of $\cM$. By showing that the main terms of the lower bound and upper bound are the same, we finish the proof.

We first rewrite the sum $\cM$ as
$$
\cM = \sum_{\substack{x < m \leqslant ax \\ m=\fl{m_1^{c_1 }}=\cdots=\fl{m_d^{c_d}}}} f(m).
$$
Here $m_1,\cdots,m_d$ are arbitrary integers. We have $m=\fl{m_1^{c_1 }}=\cdots=\fl{m_d^{c_d}}$ if and only if
$$
m^{\gamma_j} \leqslant m_j < (m+1)^{\gamma_j}.
$$
In this way, for any $j=1,\cdots,d$, we have
\begin{align}\label{prop6.2.1}
\cM =\sum_{\substack{x < m \leqslant ax \\ m^{\gamma_j} \leqslant m_j < (m+1)^{\gamma_j}}} f(m).
\end{align}

Next we try to find a necessary condition of \eqref{prop6.2.1}. Since $\gamma_j<1$ for any $j=1,\cdots,d$, we note that
$$
m^{\gamma_j} - (m+1)^{\gamma_j} > -\gamma_j x^{\gamma_j-1}.
$$
It follows that
\begin{align}\label{prop6.2.2}
m^{\gamma_j} \leqslant m_j < (m+1)^{\gamma_j} \Rightarrow -\frac{1}{2} \gamma_j x^{\gamma_j-1} \leqslant m^{\gamma_j} + \frac{1}{2} \gamma_j x^{\gamma_j-1} - m_j \leqslant \frac{1}{2} \gamma_j x^{\gamma_j-1}
\end{align}
We apply theorem \ref{theorem: polar} to count sums of the weighted numbers of
$x < m \leqslant ax$ such that
$$
\|z_{m,j}+t_j\|\leqslant v_j\ \ \ (j=1,\cdots,d),
$$
where $\boldsymbol{z}_{m}=(z_{m,1},\cdots,z_{m,d})$ are given points,
$\max\limits_{1\leqslant j\leqslant d}v_j<1/2$
and $t_1,\cdots,t_d$ are fixed. Let
$$
\Gamma=\frac{\mathbb{Z}}{v_1}\times\cdots\times\frac{\mathbb{Z}}{v_d},
\ \ \
\Pi=\left\{(m_1v_1,\cdots,m_dv_d): m_1,\cdots,m_d\in\mathbb{Z}\right\},
$$
and $\boldsymbol{\alpha}_m=((z_{m,1}+t_1)/v_1,\cdots,(z_{m,d}+t_d)/v_d)$. Take
\begin{align*}
 z_{m_{j}} = m^{\gamma_j}, \quad t_j = \frac{1}{2} \gamma_j x^{\gamma_j-1} - m_j, \quad v_j = \frac{1}{2}\gamma_j x^{\gamma_j-1}.
\end{align*}
By \eqref{prop6.2.2}, we apply Theorem \ref{theorem: polar} with the parameter $\omega = \cL/4$ and
$$
\mathcal{N}=\mathop{\sum_{x < m \leqslant ax}}_{\|z_{m,j}+t_j\|\leqslant v_j}f(m),
$$
it follows that
\begin{align*}
\cM \leqslant \cN = v_1\cdots v_d(2^d &+ O(w^{-1}))\sum_{x < m \leqslant ax}f(m)\\
&+O\Bigg(v_1\cdots v_d\mathop{\sum_{\boldsymbol{h}\in\mathbb{Z}^d\setminus\{\boldsymbol{0}\}}}
_{|h_j|<\cL x^{1-\gamma_j}}\Bigg|\sum_{x < m \leqslant ax}f(m)\mathbf{e}(\boldsymbol{h}\boldsymbol{z}_m)
\Bigg|\Bigg).
\end{align*}
Now we give a sufficient condition of \eqref{prop6.2.1} by the same way. Also noting that for a suitable constant $b$, we have
$$
m^{\gamma_j} - (m+1)^{\gamma_j} < -\gamma_j x^{\gamma_j-1} + b\cL x^{\gamma_j-1}.
$$
It follows that
\begin{align}\label{prop6.2.3}
m^{\gamma_j} \leqslant m_j < (m+1)^{\gamma_j} \Leftarrow -\frac{1}{2} \gamma_j x^{\gamma_j-1} + b\cL x^{\gamma_j-1} \leqslant m^{\gamma_j} + \frac{1}{2} \gamma_j x^{\gamma_j-1} - m_j \leqslant \frac{1}{2} \gamma_j x^{\gamma_j-1}.
\end{align}
Using the same definitions of $\Gamma, \Pi,$ and $\boldsymbol{\alpha}_m$ and taking
\begin{align*}
z_{m_{j}} = m^{\gamma_j}, \quad t_j = \frac{1}{2} \gamma_j x^{\gamma_j-1} - m_j, \quad v_j = \frac{1}{2}\gamma_j x^{\gamma_j-1} - b\cL x^{\gamma_j-1},
\end{align*}
it follows from Theorem~\ref{theorem: polar} that
\begin{align*}
\cM \geqslant \cN = v_1\cdots v_d(2^d &+ O(w^{-1}))\sum_{x < m \leqslant ax}f(m)\\
&+O\Bigg(v_1\cdots v_d\mathop{\sum_{\boldsymbol{h}\in\mathbb{Z}^d\setminus\{\boldsymbol{0}\}}}
_{|h_j|<\cL x^{1-\gamma_j}}\Bigg|\sum_{x < m \leqslant ax}f(m)\mathbf{e}(\boldsymbol{h}\boldsymbol{z}_m)
\Bigg|\Bigg).
\end{align*}
Combining the upper bound and the lower bound, we finish the proof.

\end{proof}

\section{Primes in the intersection of Piatetski-Shapiro sequences}
\label{section: intersection P-S primes}

The proof of Theorem \ref{theorem: 1} is carried out
via the the Heath-Brown identity. Here one naturally encounters some bilinear forms
of the shape
$$
\mathfrak{S}=\mathop{\sum_{0<|h_j|<X^{\delta_j}\mathcal{L}}}_{j=1,\cdots,d}
\Bigg|\mathop{{\sum_{m \sim M} \sum_{n \sim N}}}_{mn \sim X} a_m b_n \e \left(
h_1(mn)^{\gamma_1}+\cdots+h_d(mn)^{\gamma_d}\right)\Bigg|,
$$
where $\mathcal{L}:=\log X$,
$N \ge 1, M \ge 1, MN \asymp X$, $|a_m|\ll X^{\varepsilon}$ and $|b_n|\ll X^{\varepsilon}$
for any $\varepsilon>0$. If $b_n=1$ or $b_n=\log n$, we call it a Type I sum and denote it as $\mathfrak{S}_{I}$; otherwise we call it as a Type II sum and denote it as $\mathfrak{S}_{II}$.
Applying Theorem \ref{theorem: 1} and Lemma \ref{lemma: a lemmma to estimate S} we can prove
the following estimates.

\begin{proposition}\label{theorem: Type I} Assume that $\varepsilon>0$ is a
sufficiently small real number and $M<X^{29/42-2\sigma-8\varepsilon}$. Then we have
$$
\mathfrak{S}_{I}\ll X^{1-\varepsilon}.
$$
\end{proposition}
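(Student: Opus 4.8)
The plan is to fix the vector $\boldsymbol{h}=(h_1,\ldots,h_d)$ and the variable $m$, peel off the smooth coefficient $b_n$ by partial summation, bound the resulting pure exponential sum in $n$ by an exponent pair, and finally sum trivially over $m$ and over $\boldsymbol{h}$. Since here $b_n$ is $1$ or $\log n$ — monotone and of size $\ll\cL$ on $n\sim N$ — for each fixed $m\sim M$ and each admissible $\boldsymbol{h}$ (and there are only $\ll X^{\sigma+\eps}$ of these, because $\prod_j X^{\delta_j}\cL\asymp X^{\sigma}\cL^d$) partial summation reduces the inner sum over $n$, subject to $mn\sim X$, to $\cL\,\bigl|\sum_{n\in I_m}\e(g(n))\bigr|$ over a subinterval $I_m\subseteq(N,2N]$, where $g(n)=\sum_{1\le j\le d}h_jm^{\gamma_j}n^{\gamma_j}$.

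Next I would apply the exponent pair method — Lemma~\ref{lem: exponent pair method}, in the shape of Lemma~\ref{lemma: a lemmma to estimate S} — to $\sum_{n\in I_m}\e(g(n))$. For any fixed $s\ge 2$ one has
$$
g^{(s)}(n)=n^{-s}\sum_{1\le j\le d}\gamma_j\cdots(\gamma_j-s+1)\,h_jm^{\gamma_j}n^{\gamma_j},
$$
and since $\gamma_j\in(1/2,1)$ and $|h_j|(mn)^{\gamma_j}\ll X^{\delta_j+\gamma_j}\cL=X\cL$ on the ranges in play, the trivial bound gives $|g^{(s)}(n)|\ll X^{1+\eps}N^{-s}$, so the size parameter for the exponent pair is $Y\ll X/N=M$ up to $\eps$-powers. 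The delicate point is the matching lower bound, since the $d$ summands can cancel (the $h_j$ run through all sign patterns). Here I would call on Zhai's counting Lemma~\ref{lemma: zhai's lemma}: after a dyadic decomposition of the ranges of the $h_j$ the extremal case is $|h_j|\asymp X^{\delta_j}$, in which the coefficients of the generalized polynomial $\sum_j\gamma_j\cdots(\gamma_j-s+1)h_jm^{\gamma_j}N^{\gamma_j}u^{\gamma_j}$ (with $n=Nu$, $u\in[1,2]$) all have modulus $\asymp|h_j|X^{\gamma_j}\asymp X$, so for a small fixed $\delta>0$ the set of $u$ on which this polynomial is $\le X^{1-\delta}$ in modulus has measure $\ll X^{-\delta/(d-1)}$. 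Equivalently, the bad set of $n\in I_m$ with $|g^{(s)}(n)|<X^{1-\delta}N^{-s}$ has length $\ll NX^{-\delta/(d-1)}$, which I treat trivially; on the complement — a union of $O(d)$ intervals on which $|g^{(s)}(n)|\gg X^{1-\delta}N^{-s}$ — I dyadically split according to the size of $g^{(s)}$ and apply the exponent pair piece by piece, obtaining
$$
\sum_{n\in I_m}\e(g(n))\ll X^{\eps}\bigl(M^{\kappa}N^{\lambda}+X^{\delta}M^{-1}\bigr)+NX^{-\delta/(d-1)}+1.
$$

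Then I would sum over $m\sim M$ (there are $\ll X^{\eps}M$ of them, with $|a_m|\ll X^{\eps}$) and over the $\ll X^{\sigma+\eps}$ admissible $\boldsymbol{h}$, which gives
$$
\mathfrak{S}_{I}\ll X^{\sigma+O(\eps)}\bigl(M^{1+\kappa}N^{\lambda}+X^{\delta}+X^{1-\delta/(d-1)}+M\bigr).
$$
Using $MN\asymp X$ and an exponent pair on the line $\lambda-\kappa=\tfrac12$ — concretely Bourgain's pair $(\kappa,\lambda)=\bigl(\tfrac{13}{84}+\eps,\tfrac{55}{84}+\eps\bigr)$, for which $1-\lambda=\tfrac{29}{84}$ — the first term equals $X^{\sigma+O(\eps)}M^{1/2}X^{\lambda}$, and this is $\ll X^{1-\eps}$ precisely when $M\ll X^{2(1-\lambda)-2\sigma-O(\eps)}=X^{29/42-2\sigma-O(\eps)}$. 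The other three terms are harmless as soon as one can choose $\delta$ with $(d-1)(\sigma+O(\eps))<\delta<1-\sigma-O(\eps)$, which is possible exactly because $\sigma<\sigma_d$ forces $d\sigma<1$ (this is the only place the numerical value of $\sigma_d$, and hence the dichotomy $d\le 10$ versus $d\ge 11$, enters). Absorbing the accumulated $\eps$-losses into the exponent yields the statement for $M<X^{29/42-2\sigma-8\eps}$.

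The step I expect to be the main obstacle is precisely the lower bound for $g^{(s)}$: controlling the cancellation among the $d$ terms of the phase derivative by means of Zhai's lemma, and checking that the attendant error $X^{1-\delta/(d-1)+\sigma}$ really can be forced below $X^{1-\eps}$ with a legitimate choice of $\delta$. Everything after that is bookkeeping, together with feeding in a sufficiently strong exponent pair.
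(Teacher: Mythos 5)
Your proposal is correct and follows essentially the same route as the paper: a good/bad dichotomy controlled by Zhai's counting lemma (Lemma \ref{lemma: zhai's lemma}) to rule out cancellation among the $d$ phase terms, a trivial bound on the bad set whose total contribution is $\ll X^{1+\sigma-\delta/(d-1)}$ exactly as in the paper's estimate of $\cT_1$, and the exponent pair $(13/84,55/84)$ on the good set, leading to the same decisive term $X^{\sigma+\lambda}M^{1+\kappa-\lambda}$ and hence the same threshold $M<X^{29/42-2\sigma-O(\eps)}$. The only (cosmetic) difference is that you perform the dichotomy in the inner variable $n$ for each fixed $m$, whereas the paper excises a bad set of $m$'s outright; both versions require $(d-1)\sigma<\delta<1-\sigma$, which the hypotheses on $\sigma$ guarantee.
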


\begin{proposition}\label{theorem: Type II} Assume that $\varepsilon>0$ is a
sufficiently small real number and $X^{117\sigma/20+30\varepsilon} < M < X^{1-2\sigma-10\varepsilon}$.
Then we have
$$
\mathfrak{S}_{II}\ll X^{1-\varepsilon}.
$$
\end{proposition}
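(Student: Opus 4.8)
The plan is to apply Theorem~\ref{theorem: 1} to the inner bilinear sum in $\mathfrak{S}_{II}$ for each admissible $\boldsymbol{h}$, and then sum the resulting estimate over $\boldsymbol{h}$. Since $h_1(mn)^{\gamma_1}+\cdots+h_d(mn)^{\gamma_d}=\sum_{j}h_jm^{\gamma_j}n^{\gamma_j}$, the inner sum has exactly the shape treated in Theorem~\ref{theorem: 1} with $\beta_j=\gamma_j$ and $E_j=h_jM^{\gamma_j}N^{\gamma_j}=h_jX^{\gamma_j}$; for any integer $s\ge 2$ the quantity $\Delta$ there becomes $\Delta_s(\boldsymbol{h})=\bigl|\sum_{j}P_s(\gamma_j)h_jX^{\gamma_j}\bigr|$, where $P_s(t)=t(t-1)\cdots(t-s+1)$. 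When $M>X^{1/2}$ I would first interchange the roles of $m$ and $n$ so that $M\le N$ throughout; since $N=X/M$ this is harmless. Fixing an exponent pair $(\kappa,\lambda)$ and abbreviating $\Delta_s=\Delta_s(\boldsymbol{h})$, Theorem~\ref{theorem: 1} gives, for each $\boldsymbol{h}$,
$$
\Bigl|\mathop{\sum_{m\sim M}\sum_{n\sim N}}_{mn\sim X}a_mb_n\,\e\Bigl(\textstyle\sum_{j}h_j(mn)^{\gamma_j}\Bigr)\Bigr|\ll X^{3\varepsilon}\Bigl(M^{1/2}X^{1/2}+\Delta_s^{-1/2}X+M^{\frac{\lambda-\kappa-1}{2\kappa+2}}X+\Delta_s^{\frac{\kappa}{2\kappa+2}}M^{\frac{\lambda-1}{2\kappa+2}}X^{\frac{2+\kappa}{2\kappa+2}}\Bigr).
$$

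Summing over the $\ll X^{\sigma+\varepsilon}$ tuples $\boldsymbol{h}\in\ZZ^d$ with $0<|h_j|<X^{\delta_j}\cL$, the first and third terms do not depend on $\boldsymbol{h}$ and contribute $\ll X^{\sigma+4\varepsilon}M^{1/2}X^{1/2}$ and $\ll X^{\sigma+4\varepsilon}M^{(\lambda-\kappa-1)/(2\kappa+2)}X$. The former is $\ll X^{1-\varepsilon}$ precisely when $M<X^{1-2\sigma-10\varepsilon}$ — this forces the upper restriction on $M$ — and the latter is $\ll X^{1-\varepsilon}$ once $M>X^{\sigma(2\kappa+2)/(\kappa+1-\lambda)+O(\varepsilon)}$. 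For the fourth term I would use the crude uniform bound $\Delta_s\le\sum_j|P_s(\gamma_j)|\,|h_j|X^{\gamma_j}\ll X^{1+\varepsilon}$ (valid for every $s$, since $|h_j|X^{\gamma_j}<X^{\delta_j+\gamma_j}\cL=X\cL$), so that its $\boldsymbol{h}$-sum is $\ll X^{\sigma+1+O(\varepsilon)}M^{(\lambda-1)/(2\kappa+2)}$, which is $\ll X^{1-\varepsilon}$ once $M>X^{\sigma(2\kappa+2)/(1-\lambda)+O(\varepsilon)}$. Because $1-\lambda<\kappa+1-\lambda$, this last condition is the stronger one; choosing the exponent pair that makes $(2\kappa+2)/(1-\lambda)$ as small as possible (and absorbing an $\varepsilon$-loss) is what produces the lower restriction $M>X^{117\sigma/20+30\varepsilon}$.

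The genuinely delicate term — and the main obstacle — is $\Delta_s(\boldsymbol{h})^{-1/2}X$, which is worse than the trivial bound $X$ as soon as $\Delta_s(\boldsymbol{h})$ is small; and for any fixed $s$ the tuples with $\Delta_s(\boldsymbol{h})$ tiny fill a thin slab about a hyperplane, of which there are $\asymp X^{\sigma-\delta_1}$ — far too many to be dispatched term by term by the trivial bound. The resolution is to exploit the freedom in $s$: for each $\boldsymbol{h}$ pick $s=s(\boldsymbol{h})\in\{2,\dots,d+1\}$ maximising $\Delta_s(\boldsymbol{h})$ (this only helps the second term and leaves the uniform treatment of the fourth term unchanged). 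The key point is that the $d\times d$ matrix $\bigl(P_s(\gamma_j)\bigr)_{2\le s\le d+1,\ 1\le j\le d}$ is nonsingular: a nonzero linear combination of $P_2,\dots,P_{d+1}$ has degree between $2$ and $d+1$ and vanishes at $0$, so if it also vanished at $\gamma_1,\dots,\gamma_d$ it would be a nonzero multiple of $t\prod_j(t-\gamma_j)$; but each $P_s$ with $s\ge 2$ vanishes at $1$ as well as at $0$, and — the two spaces having the same dimension $d$ — the span of $P_2,\dots,P_{d+1}$ is exactly the space of polynomials of degree $\le d+1$ vanishing at $0$ and at $1$, whereas $t\prod_j(t-\gamma_j)$ does not vanish at $1$ because every $\gamma_j\in(0,1)$. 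Therefore $\max_{2\le s\le d+1}\Delta_s(\boldsymbol{h})\gg\max_j|h_j|X^{\gamma_j}$ with an implied constant depending only on $\boldsymbol{\gamma}$ and $d$ (one may instead read this off from Lemma~\ref{lemma: zhai's lemma}, viewing $\Delta_s$ as the $s$-th derivative of $\sum_jh_jt^{\gamma_j}$ at scale $t\asymp X$). With $s=s(\boldsymbol{h})$ so chosen, $\Delta_{s(\boldsymbol{h})}(\boldsymbol{h})^{-1/2}X\ll(\max_j|h_j|X^{\gamma_j})^{-1/2}X$, and $\sum_{\boldsymbol{h}}(\max_j|h_j|X^{\gamma_j})^{-1/2}\ll X^{\sigma-1/2+\varepsilon}$ (isolate the index $k$ at which $|h_k|X^{\gamma_k}$ is largest, then induct on $d$), so this term contributes $\ll X^{1/2+\sigma+\varepsilon}\ll X^{1-\varepsilon}$ since $\sigma<1/2$. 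Collecting the four contributions finishes the argument for $M\le X^{1/2}$, and the range $M>X^{1/2}$ follows from the initial interchange of $m$ and $n$.
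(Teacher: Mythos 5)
Your argument is essentially correct in its core, but it reaches Proposition~\ref{theorem: Type II} by a genuinely different route on the one delicate point. The paper handles the dangerous term $\Delta^{-1/2}X$ by splitting the range $m\sim M$ into a set $\mathcal{I}_1$ where $\bigl|\sum_j h_jX^{\gamma_j}(m/M)^{\gamma_j}\bigr|<X^{2\sigma+10\varepsilon}$ (treated trivially in $n$ and counted by Zhai's Lemma~\ref{lemma: zhai's lemma}) and its complement $\mathcal{I}_2$, on which it asserts $\Delta_2\gg X^{2\sigma+10\varepsilon}$ and then applies Theorem~\ref{theorem: 1} with the single exponent pair $(11/28,11/21)$ (which is exactly what yields $(2\kappa+2)/(1-\lambda)=117/20$, the pair you left implicit). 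You instead keep the whole range of $m$, choose the derivative order $s=s(\boldsymbol{h})\in\{2,\dots,d+1\}$ maximising $\Delta_s(\boldsymbol{h})$, and prove via the nonsingularity of $\bigl(P_s(\gamma_j)\bigr)$ that $\max_s\Delta_s(\boldsymbol{h})\gg\max_j|h_j|X^{\gamma_j}\gg X^{1/2}$; your polynomial argument is correct (granting, as the paper implicitly does, that the $\gamma_j$ are distinct), and your summation $\sum_{\boldsymbol{h}}(\max_j|h_j|X^{\gamma_j})^{-1/2}\ll X^{\sigma-1/2+\varepsilon}$ checks out. This is arguably tighter than the paper's own step: membership in $\mathcal{I}_2$ controls $\sum_j h_jX^{\gamma_j}(m/M)^{\gamma_j}$, whose coefficients differ from those of $\Delta_2$, so the paper's inference that $\Delta_2$ is large is not airtight for a fixed $s$ — your freedom in $s$ is precisely what repairs it. The treatment of the other three terms, and the identification of which term forces which restriction on $M$, matches the paper's.

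There is, however, one genuine flaw as written: the interchange of $m$ and $n$ when $M>X^{1/2}$. Theorem~\ref{theorem: 1} nowhere requires $M\le N$ (its proof applies Cauchy in $m$, Weyl--van der Corput in $n$ with shift $Q\le N$, and the balancing lemma absorbs the boundary case $Q=N$), and the hypothesis on $M$ is not symmetric under $M\leftrightarrow N$. After swapping, the new outer length is $N=X/M$, and for $M$ in the range $X^{1-117\sigma/20}<M<X^{1-2\sigma-10\varepsilon}$ (nonempty since $117/20>2$) one only has $N>X^{2\sigma+10\varepsilon}$, which is weaker than the lower bound $N>X^{117\sigma/20+30\varepsilon}$ that your own analysis of the third and fourth terms demands. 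So ``the range $M>X^{1/2}$ follows from the initial interchange'' is false. The repair is immediate — delete the interchange and apply Theorem~\ref{theorem: 1} with the given $M$ throughout, exactly as the paper does — but the proof as stated does not cover the upper part of the hypothesised range of $M$.
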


\subsection{Proof of Proposition \ref{theorem: Type I}}

Denote
$$
\cT=\sum_{m\sim M}\mathop{\sum_{n\sim N}}_{mn\sim X}a_mb_n\mathbf{e}\left(h_1(mn)^{\gamma_1}
+\cdots+h_d(mn)^{\gamma_d}\right).
$$
We divide the range $m\sim M$ into the following two integer sets:
$$
\mathcal{I}_1=\left\{m\sim M: \left|h_1X^{\gamma_1}\left(\frac{m}{M}\right)^{\gamma_1}
+\cdots+h_dX^{\gamma_d}\left(\frac{m}{M}\right)^{\gamma_d}\right|<X^{2\sigma+10\varepsilon}\right\}
$$
and
$$
\mathcal{I}_2=\{m\sim M: m\notin\mathcal{I}_1\}.
$$
Then we have
\begin{align}\label{eq: SI=T1+T2}
\mathfrak{S}_{I}=\mathop{\sum_{1<|h_j|<X^{\delta_j}\mathcal{L}}}_{j=1,\cdots,d}\cT_1
+\mathop{\sum_{1<|h_j|<X^{\delta_j}\mathcal{L}}}_{j=1,\cdots,d}\cT_2,
\end{align}
where
\begin{align*}
\cT_1=\mathop{\sum_{m\sim M}}_{m\in\mathcal{I}_1}\mathop{\sum_{n\sim N}}_{mn\sim X}a_mb_n
\mathbf{e}\left(h_1(mn)^{\gamma_1}+\cdots+h_d(mn)^{\gamma_d}\right),
\end{align*}
and
\begin{align*}
\cT_2=\mathop{\sum_{m\sim M}}_{m\in\mathcal{I}_2}\mathop{\sum_{n\sim N}}_{mn\sim X}a_mb_n
\mathbf{e}\left(h_1(mn)^{\gamma_1}+\cdots+h_d(mn)^{\gamma_d}\right).
\end{align*}

We begin with $\cT_1$. Clearly,
\begin{align*}
\cT_1\ll X^{2\varepsilon}N\mathop{\sum_{m\sim M}}_{m\in\mathcal{I}_1}1
\ll X^{2\varepsilon}N\#\mathcal{I}_1,
\end{align*}
where $\#\mathcal{I}_1$ is the cardinality of $\mathcal{I}_1$.
It follows from Lemma \ref{lemma: zhai's lemma} that
\begin{align*}
\#\mathcal{I}_1\ll \left(\frac{X^{2\sigma+10\varepsilon}}{|h_1X^{\gamma_1}|+\cdots+
|h_dX^{\gamma_d}|}\right)^{1/(d-1)}M.
\end{align*}
Then we have
\begin{align*}
\mathop{\sum_{1<|h_j|<X^{\delta_j}\mathcal{L}}}_{j=1,\cdots,d}\cT_1
\ll &X^{1+2\sigma/(d-1)+12\varepsilon}\\
&\times\mathop{\sum_{1<|h_j|<X^{\delta_j}\mathcal{L}}}_{j=1,\cdots,d}
\left(\left(|h_1|X^{\gamma_1}\right)^{-1/(d-1)}
+\cdots+\left(|h_d|X^{\gamma_d}\right)^{-1/(d-1)}\right).
\end{align*}
For every integer $1\leqslant j\leqslant d$ we have
$$
\sum_{|h_j|<X^{\delta_j}\log X}\left(|h_j|X^{\gamma_j}\right)^{-1/(d-1)}
\ll X^{\delta_j-1/(d-1)}\mathcal{L}.
$$
Therefore,
\begin{align*}
\mathop{\sum_{1<|h_j|<X^{\delta_j}\mathcal{L}}}_{j=1,\cdots,d}\cT_1
\ll X^{(d+1)\sigma/(d-1)-1/(d-1)+1+13\varepsilon}
\end{align*}
Furthermore, since $\varepsilon>0$ is a sufficiently small real number
and $\sigma<1/(d+1)$, with $\sigma<1/(d+1)-14\varepsilon$ we see that
\begin{align}\label{eq: bound for T1}
\mathop{\sum_{1<|h_j|<X^{\delta_j}\mathcal{L}}}_{j=1,\cdots,d}\cT_1
\ll X^{1-\varepsilon}.
\end{align}

Now we estimate $\cT_2$. Let
$$
\Delta_1=\Bigg|\sum_{1\leqslant j\leqslant d}\gamma_j\cdots(\gamma_j-s+1)
h_jX^{\gamma_j}\Bigg|N^{-1}.
$$
The condition $m\in\mathcal{I}_2$ implies that $\Delta_1\gg X^{2\sigma} N^{-1}$.
Applying a partial summation and
Lemma \ref{lemma: a lemmma to estimate S} with $z_1=h_1m^{\gamma_1},\cdots,z_d=h_dm^{\gamma_d}$
and $\Delta_0=\Delta_1$ we have
\begin{align*}
X^{-2\varepsilon}\cT_2\ll\sum_{m\sim M}
\left(\Delta_1^{\kappa}N^{\lambda}+\Delta_1^{-1}\right)
\ll \Delta_1^{\kappa}X^{\lambda}M^{1-\lambda}+\Delta_1^{-1}XN^{-1}.
\end{align*}
Therefore,
\begin{align}\label{eq: bound for T2}
\mathop{\sum_{1<|h_j|<X^{\delta_j}\mathcal{L}}}_{j=1,\cdots,d}\cT_2
\ll X^{\lambda+\sigma+3\varepsilon}M^{1+\kappa-\lambda}+X^{1-\sigma+3\varepsilon}.
\end{align}

Inserting the exponent pair $(\kappa,\lambda)=(13/84,55/84)$
(see Theorem 6 by Bourgain \cite{Bour}) into (\ref{eq: bound for T2}), and combining
(\ref{eq: SI=T1+T2}) and (\ref{eq: bound for T1}) we derive that
$$
\mathfrak{S}_{I}\ll X^{1-\varepsilon}
$$
provided that $M<X^{29/42-2\sigma-8\varepsilon}$.
This proves Proposition \ref{theorem: Type I}.

\subsection{Proof of Proposition \ref{theorem: Type II}}

For convenience, we continue to use the notation
$$
\cT_2=\mathop{\sum_{m\sim M}}_{m\in\mathcal{I}_2}
\mathop{\sum_{n\sim N}}_{mn\sim X}a_mb_n\mathbf{e}\left(h_1(mn)^{\gamma_1}
+\cdots+h_d(mn)^{\gamma_d}\right).
$$
Following the proof of Theorem \ref{theorem: Type I} we can derive that
\begin{align*}
\mathfrak{S}_{II}=\mathop{\sum_{1<|h_j|<X^{\delta_j}\mathcal{L}}}_{j=1,\cdots,d}\cT_2
+O\left(X^{1-\varepsilon}\right).
\end{align*}
We also remark that $b_n\ll X^{\varepsilon}$ in this case.

Denote
$$
\Delta_2=\Bigg|\sum_{1\leqslant j\leqslant d}\gamma_j\cdots(\gamma_j-s+1)h_jX^{\gamma_j}\Bigg|.
$$
The condition $n\in \mathcal{I}_2$ implies that $\Delta_2\gg X^{2\sigma+10\varepsilon}$.
Applying Theorem \ref{theorem: 1} with $E_j=X^{\gamma_j}h_j$
and $(\kappa,\lambda)=(11/28,11/21)$ (see Page 58 in \cite{Mont1994}) we have
\begin{align*}
X^{-3\varepsilon}\cT_2\ll M^{1/2}X^{1/2}+\Delta_2^{-1/2}X
+M^{-73/234}X
+\Delta_2^{11/78}M^{-20/117}X^{67/78}.
\end{align*}
Hence,
\begin{align*}
X^{-4\varepsilon}\mathop{\sum_{1<|h_j|<X^{\delta_j}\mathcal{L}}}_{j=1,\cdots,d}\cT_2
\ll M^{1/2}X^{1/2+\sigma}+X^{1-5\varepsilon}+M^{-73/234}X^{1+\sigma}
+M^{-20/117}X^{1+\sigma}.
\end{align*}
Since $M>X^{117\sigma/20+30\varepsilon}$ and $M<X^{1-2\sigma-10\varepsilon}$,
we can infer that
\begin{align*}
\mathop{\sum_{1<|h_j|<X^{\delta_j}\mathcal{L}}}_{j=1,\cdots,d}\cT_4\ll X^{1-\varepsilon}.
\end{align*}
Therefore,
$$
\mathfrak{S}_{II}\ll X^{1-\varepsilon}.
$$
This proves Proposition \ref{theorem: Type II}.

\subsection{Proof of Theorem \ref{theorem: 2}}\label{subsection: proof of intersection}

Assume that $1<a\leqslant 1+\mathcal{L}^{-1}$. We shall show that
\begin{align*}
\mathop{\sum_{X< m\leqslant aX}}_{m=\lfloor n_1^{c_1}\rfloor=\cdots=\lfloor n_d^{c_d}\rfloor}\Lambda(m)
=\big(1+O(\mathcal{L}^{-1})\big)\gamma_1\cdots\gamma_d X^{-\sigma}
\sum_{X<m\leqslant aX}\Lambda(m).
\end{align*}
Replacing $X$ by $x\in(X,2X]$ and summing over all disjoint subintervals of $(X,2X]$
of length $\ll X\mathcal{L}^{-1}$, we obtain
$$
\mathop{\sum_{X< m\leqslant 2X}}_{m=\lfloor n_1^{c_1}\rfloor=\cdots=\lfloor n_d^{c_d}\rfloor}\Lambda(m)
=\big(1+O(\mathcal{L}^{-1})\big)\frac{\gamma_1\cdots\gamma_d}{1-\sigma}
(2^{1-\sigma}-1)X^{1-\sigma}.
$$
This yields Theorem 1.2 by standard arguments.

Write the vectors $\boldsymbol{h}=(h_1,\cdots,h_d)$ and $\boldsymbol{0}=(0,\cdots,0)$.
In view of Theorem \ref{theorem: characterization}, it remains to prove that
\begin{align*}
\mathop{\mathop{\sum_{\boldsymbol{h}\in\mathbb{Z}^d\setminus\{\boldsymbol{0}\}}
}_{|h_j|<X^{\delta_j}\mathcal{L}}}_{j=1,\cdots,d}
\Bigg|\sum_{X<m\leqslant aX}\Lambda(m)
\mathbf{e}\left(h_1m^{\gamma_1}+\cdots+h_dm^{\gamma_d}\right)\Bigg|
	\ll X^{1-\varepsilon}
\end{align*}
for any small real $\varepsilon>0$.
In fact, we only need to consider the function $h_1m^{\gamma_1}+\cdots+h_dm^{\gamma_d}$
with non-zero coefficients. It suffices to show that
\begin{align}\label{eq: main goal}
\mathop{\sum_{0<|h_j|<X^{\delta_j}\mathcal{L}}}_{j=1,\cdots,r}
\Bigg|\sum_{X < m \leqslant aX}\Lambda(m)
\mathbf{e}\left(h_1m^{\gamma_1}+\cdots+h_rm^{\gamma_r}\right)\Bigg|
\ll X^{1-\varepsilon},
\end{align}
where $1\leqslant r\leqslant d$.

We apply the Heath-Brown identity (Lemma \ref{lem: Heath-Brown inentity})
with $k=4$. One can find that the left side of (\ref{eq: main goal}) can be written as
linear combination of $O\left((\log X)^8\right)$ sums, each of which is of the shape
\begin{align*}
\Omega(X)=\sum_{n_1\sim N_1}\cdots\sum_{n_8\sim N_8}
&\log n_1\mu(n_5)\mu(n_6)\mu(n_7)\mu(n_8)\\
&\times\mathbf{e}\left(m_1(n_1\cdots n_8)^{\gamma_1}+\cdots+m_r(n_1\cdots n_8)^{\gamma_r}\right),
\end{align*}
where
$$
N_1\cdots N_8\asymp X,\ \ \ \ \ \ N_5,N_6,N_7,N_8\leqslant X^{1/4},
$$
and some $n_i$ may only take value 1. This can be estimated according to
the following three cases.

\textbf{Case I}.  If there exists a $N_j$ with $N_j\geqslant X^{13/42+2\sigma+8\varepsilon}$,
we must have $N_j>X^{1/4}$ and then $j\in\{1,2,3,4\}$. Let
$$
M=\frac{N_1\cdots N_8}{N_j},\ \ \ N=N_j,\ \ \
m=\frac{n_1\cdots n_8}{n_j},\ \ \ n=n_j.
$$
In this case, one can infer that $\Omega(X)$ is a sum of Type I and $M<X^{29/42-2\sigma-8\varepsilon}$.
From Proposition \ref{theorem: Type I} we have
\begin{align*}
\Omega(X)\ll X^{1-\varepsilon}.
\end{align*}

\textbf{Case II}. If there exists a $N_j$ such that
$X^{2\sigma+10\varepsilon}\leqslant N_j<X^{13/42+2\sigma+8\varepsilon}$, we let
$$
N=N_j,\ \ \ M=\frac{N_1\cdots N_8}{N_j},\ \ \ n=n_j,\ \ \ m=\frac{n_1\cdots n_8}{n_j}.
$$
In this case, one can see that $\Omega(X)$ is a sum of Type II and
$$
X^{117\sigma/20+30\varepsilon}<
X^{29/42-2\sigma-8\varepsilon}<M\leqslant X^{1-2\sigma-10\varepsilon},
$$
where we use the fact that $\sigma< \frac{290}{3297}$. By Proposition \ref{theorem: Type II}
we obtain
\begin{align*}
\Omega(X)\ll X^{1-\varepsilon}.
\end{align*}

\textbf{Case III}. If $N_1,\cdots,N_8<X^{2\sigma+10\varepsilon}$, without of generality,
we suppose that $N_1\geqslant N_2\geqslant \cdots\geqslant N_8$ and assume that
$j_0$ is the integer with
$$
N_1N_2\cdots N_{j_0-1}<X^{2\sigma+10\varepsilon}\ \ \ \hbox{and }\ \ \
N_1N_2\cdots N_{j_0}\geqslant X^{2\sigma+10\varepsilon}.
$$
Then we have $j_0\in\{2,\cdots 7\}$ and
$$
X^{2\sigma+10\varepsilon}\leqslant N_1N_2\cdots N_{j_0}=(N_1\cdots N_{j_0-1})N_{j_0}
<\left(X^{2\sigma+10\varepsilon}\right)^2=X^{4\sigma+20\varepsilon}.
$$
Let
$$
N=N_1\cdots N_{j_0},\ \ \ M=N_{j_0+1}\cdots N_8,\ \ \
n=n_1\cdots n_{j_0},\ \ \ m=n_{j_0+1}\cdots n_8.
$$
It follows from $\sigma<\frac{290}{3297}$ that
$$
X^{117\sigma/20+30\varepsilon}<X^{1-4\sigma-20\varepsilon}< M<X^{1-2\sigma-10\varepsilon}.
$$
In this case, $\Omega(X)$ is a sum of Type II. Applying Proposition \ref{theorem: Type II} we have
\begin{align*}
\Omega(X)\ll X^{1-\varepsilon}.
\end{align*}

Now combining the above three cases we obtain (\ref{eq: main goal}).
This completes the proof of Theorem \ref{theorem: 1}.

\section{The iterated Piatetski-Shapiro sequence}
\label{section: proof 3}
In this section, We shall use Theorem~\ref{theorem: polar} to give an initial construction. By this way, we change this question into an estimate of a special exponential sum which contains a floor function in the exponent. Thanks to Kolesnik \cite{Kolesnik2003}, we apply lemma~\ref{lemma: Kolesnik} to solve the problem.

\subsection{Initial construction}

Recall that
$$
\cP(X; c_1,c_2) = \#\left\{p\leqslant X: p=\lfloor\lfloor h^{c_1}\rfloor ^{c_2}\rfloor
\ \hbox{for some integer}\ h\geqslant 1\right\}.
$$
Write $\cL = \log X$ and $1<a<1+\cL^{-1}$. Similar with Section~\ref{subsection: proof of intersection}, we suffice to estimate
\begin{align}
\cM =\sum_{\substack{X < m \leqslant aX \\ m=\lfloor\lfloor h^{c_1}\rfloor ^{c_2}\rfloor }}\Lambda(m)
\end{align}
With the following arguments we find a necessary condition and a sufficient condition to count the integers $m=\lfloor\lfloor h^{c_1}\rfloor ^{c_2}\rfloor$. Similar to the proof of Proposition \ref{theorem: characterization}, by Theorem~\ref{theorem: polar}, but a more complex calculation,\ we obtain a lower bound and a upper bound of
$\cP(X; c_1,c_2)$. By showing that the main terms of the lower bound and upper bound are same, we turn to estimate the error term.

Let $k=\lfloor h^{c_1}\rfloor$. We first have the following relations
$$
m=\lfloor\lfloor h^{c_1}\rfloor ^{c_2}\rfloor \Leftrightarrow m^{\gamma_2} \leqslant k < (m+1)^{\gamma_2} \quad \text{and} \quad k^{\gamma_1} \leqslant h < (k+1)^{\gamma_1}.
$$
Since there is an integer between $m^{\gamma_2}$ and $(m+1)^{\gamma_2}$ for our $m$ (which has at most one integer), we note that $k = \lfloor (m+1)^{\gamma_2}\rfloor$. We derive that
$$
\lfloor (m+1)^{\gamma_2}\rfloor^{\gamma_1} \leqslant h < (\lfloor (m+1)^{\gamma_2}\rfloor+1)^{\gamma_1}.
$$
Hence we obtain that
\begin{align}
	\label{7.1.1}
\cM = \sum_{\substack{X < m \leqslant aX \\ m^{\gamma_2} \leqslant k < (m+1)^{\gamma_2} \\ \lfloor (m+1)^{\gamma_2}\rfloor^{\gamma_1} \leqslant h < (\lfloor (m+1)^{\gamma_2}\rfloor+1)^{\gamma_1}}}\Lambda(m),
\end{align}
where $k$ and $h$ are defined as above.

Next we try to find a necessary condition of \eqref{7.1.1}. It is easy to see that
\begin{align}
	\label{7.1.2}
m^{\gamma_2} \leqslant k < (m+1)^{\gamma_2} \Rightarrow -\frac{1}{2}\gamma_2 X^{\gamma_2-1} \leqslant m^{\gamma_2}+\frac{1}{2}\gamma_2 X^{\gamma_2-1} -k \leqslant \frac{1}{2}\gamma_2 X^{\gamma_2-1}
\end{align}
and
\begin{align}
	\label{7.1.3}
\nonumber \lfloor &(m+1)^{\gamma_2}\rfloor^{\gamma_1} \leqslant h < (\lfloor (m+1)^{\gamma_2}\rfloor+1)^{\gamma_1} \\ &\Rightarrow -\frac{1}{2}\gamma_1 X^{\gamma_2(\gamma_1-1)} \leqslant\lfloor (m+1)^{\gamma_2}\rfloor^{\gamma_1}+\frac{1}{2}\gamma_1 X^{\gamma_2(\gamma_1-1)} -h \leqslant\frac{1}{2}\gamma_1 X^{\gamma_2(\gamma_1-1)}.
\end{align}
Following the arguments of Proposition \ref{theorem: characterization}, we let
\begin{align*}
&z_{n_1} = \lfloor(m+1)^{\gamma_2}\rfloor^{\gamma_1}, \quad &&z_{n_2} = m^{\gamma_2}, \\
&t_1 = \frac{1}{2}\gamma_1 X^{\gamma_2(\gamma_1-1)} -h, \quad &&t_2 = \frac{1}{2}\gamma_2 X^{\gamma_2-1} -k, \\
&v_1 = \frac{1}{2}\gamma_1 X^{\gamma_2(\gamma_1-1)}, \quad &&v_2 = \frac{1}{2}\gamma_2 X^{\gamma_2-1}.
\end{align*}
In this way, by \eqref{7.1.2} and \eqref{7.1.3} we obtain that
\begin{align}
	\label{7.1.4}
\nonumber \cM &\leqslant (1+o(1)) \sum_{\substack{X < m \leqslant aX \\ \| z_{n_j} + t_j \| \leqslant v_j \ (j=1,2)}}\Lambda(m) \\
\nonumber &= \gamma_1\gamma_2 X^{\gamma_1\gamma_2-1} \bigg( (1+o(1)) \sum_{X < m \leqslant aX}\Lambda(m) \\
&\quad+O\Big( \sum_{\substack{(k,h) \in \ZZ^d \backslash (0,0) \\ |k|<\cL X^{1-\gamma_2} \\ |h|<\cL X^{\gamma_2(1-\gamma_1)}}} \Big| \Lambda(m) \e (km^{\gamma_2} + h\lfloor(m+1)^{\gamma_2}\rfloor^{\gamma_1}) \Big| \Big) \bigg).
\end{align}

Now we try to give a lower bound of $\cM$. Since for a suitable constant $b_1$, we have that $ (\lfloor (m+1)^{\gamma_2}\rfloor+1)^{\gamma_1} $ lies in the following interval
\begin{align*}
\bigg(\lfloor (m+1)^{\gamma_2}\rfloor^{\gamma_1} + \gamma_1X^{\gamma_2(\gamma_1-1)} - b_1\cL^{-1}X^{\gamma_2(\gamma_1-1)},\lfloor (m+1)^{\gamma_2}\rfloor^{\gamma_1} + \gamma_1X^{\gamma_2(\gamma_1-1)}\bigg].
\end{align*}
If we replace $-\frac{1}{2}\gamma_1 X^{\gamma_2(\gamma_1-1)}$ by $-\frac{1}{2}\gamma_1 X^{\gamma_2(\gamma_1-1)} + b_1\cL^{-1}X^{\gamma_2(\gamma_1-1)}$ in the left-hand inequality in \eqref{7.1.3}, we find that
\begin{align}
	\label{7.1.5}
\nonumber -\frac{1}{2}\gamma_1 X^{\gamma_2(\gamma_1-1)} + b_1\cL^{-1}&X^{\gamma_2(\gamma_1-1)} \leqslant\lfloor (m+1)^{\gamma_2}\rfloor^{\gamma_1}+\frac{1}{2}\gamma_1 X^{\gamma_2(\gamma_1-1)} -h \leqslant\frac{1}{2}\gamma_1 X^{\gamma_2(\gamma_1-1)} \\
&\Rightarrow \lfloor (m+1)^{\gamma_2}\rfloor^{\gamma_1} \leqslant h < (\lfloor (m+1)^{\gamma_2}\rfloor+1)^{\gamma_1}.
\end{align}
Similarly, there is also a suitable constant $b_2$, such that
\begin{align}
	\label{7.1.6}
\nonumber -\frac{1}{2}\gamma_2 X^{\gamma_2-1} + b_2\cL^{-1}&X^{\gamma_2-1} \leqslant m^{\gamma_2}+\frac{1}{2}\gamma_2 X^{\gamma_2-1} -k \leqslant \frac{1}{2}\gamma_2 X^{\gamma_2-1}\\
&\Rightarrow m^{\gamma_2} \leqslant k < (m+1)^{\gamma_2}.
\end{align}
With the same definition of $z_{n_j}$ and $t_j$ of the lower bound and
\begin{align*}
v_1 = \frac{1}{2}\gamma_1 X^{\gamma_2(\gamma_1-1)} - b_1\cL^{-1}&X^{\gamma_2(\gamma_1-1)}, \quad v_2 = \frac{1}{2}\gamma_2 X^{\gamma_2-1} - b_2\cL^{-1}X^{\gamma_2-1},i
\end{align*}
by \eqref{7.1.5} and \eqref{7.1.6} we achieve that
\begin{align}
\label{7.1.7}
\cM &\geqslant \gamma_1\gamma_2 X^{\gamma_1\gamma_2-1} \bigg( (1+o(1)) \sum_{X < m \leqslant aX}\Lambda(m) \nonumber \\
&\quad+O\Big( \sum_{\substack{(k,h) \in \ZZ^2 \backslash (0,0) \\ |k|<\cL X^{1-\gamma_2} \\ |h|<\cL X^{\gamma_2(1-\gamma_1)}}} \Big| \sum_{X < m \leqslant aX} \Lambda(m) \e (km^{\gamma_2} + h\lfloor(m+1)^{\gamma_2}\rfloor^{\gamma_1}) \Big| \Big) \bigg).
\end{align}
Combining \eqref{7.1.4} and \eqref{7.1.7}, our main goal is to prove
\begin{align}\label{eq: main goal in intersection ps}
\sum_{\substack{(k,h) \in \ZZ^2 \backslash (0,0) \\ |k|<\cL X^{1-\gamma_2} \\ |h|<\cL X^{\gamma_2(1-\gamma_1)}}} \Big| \sum_{X < m \leqslant aX} \Lambda(m) \e (km^{\gamma_2} + h\lfloor(m+1)^{\gamma_2}\rfloor^{\gamma_1})\Big|\ll X^{1-\varepsilon_0}
\end{align}
for some real $\varepsilon_0>0$.

\subsection{Application of Kolesnik's result}

For convenience, we denote
$$
\cF=\sum_{X < m \leqslant aX}\Lambda(m)
\e (km^{\gamma_2} + h\lfloor(m+1)^{\gamma_2}\rfloor^{\gamma_1}).
$$
Let $\varepsilon>0$ be a sufficiently small real number, and let
$$
r=\fl{\varepsilon^{-1}}+1\ \ \ \hbox{and}\ \ \ W=X^{1-\gamma_1\gamma_2+\varepsilon}.
$$
We apply Lemma \ref{lemma: Kolesnik} with
$$
a(x)=\Lambda(x),\ \ \ g(x)=x,\ \ \ h(x)=(x+1)^{\gamma_2}
$$
and
$$
f(\mu,\nu)=k\mu^{\gamma_2}+h\left((\mu+1)^{\gamma_2}-\nu\right)^{\gamma_1}.
$$
Since $\mu=g(m)$ and $\nu=h(m)$, we can see that
\begin{align*}
\frac{\partial f(\mu,\nu)}{\partial \mu}
&=h\gamma_1\gamma_2\left((\mu+1)^{\gamma_2}-\nu\right)^{\gamma_1-1}
(\mu+1)^{\gamma_2-1}+\gamma_2k\mu^{\gamma_2-1}\\
&\ll |k|X^{\gamma_2-1}\ll \log X.
\end{align*}
Thus we can take $\lambda=O\left(\log X\right)$. It follows that
\begin{align*}
\cF=\cF_1+O(\cF_2)+O(\cF_3),
\end{align*}
where
\begin{align*}
\cF_1=\frac{1}{W}&\sum_{w=0}^{W}\sum_{j=0}^{\infty}\left(\frac{\sin(\pi j/M)}{\pi j/W}\right)^{r+1}
\mathbf{e}\left(-\frac{(2w+1)j}{2W}\right)\\
&\times\sum_{X < m \leqslant aX}
\Lambda(m)\mathbf{e}\left(h\left((m+1)^{\gamma_2}-\frac{w}{W}\right)^{\gamma_1}
+km^{\gamma_2}+j(m+1)^{\gamma_2}\right),
\end{align*}
\begin{align*}
\cF_2=\frac{\lambda r+r}{W}\sum_{X\leqslant m<a X}\Lambda(m),
\end{align*}
and
\begin{align*}
\cF_3=\frac{r}{W}\sum_{j=0}^{\infty}\frac{\sin(2\pi rj/W)}{\sin(\pi j/W)}
\left(\frac{\sin(\pi j/W)}{\pi j/W}\right)^{r+1}
\sum_{X < m \leqslant aX}\Lambda(m)\mathbf{e}\left(j (m+1)^{\gamma_2}\right).
\end{align*}

Using similar methods with Section~\ref{section: intersection P-S primes}, we have the following three propositions and we will give the detailed proof in the next section.

\begin{proposition}\label{F1}
Let $\cF_1$ be defined as above, if $\gamma_1$ and $\gamma_2$ satisfy that
$$
\left\{\begin{array}{ll}
4\gamma_1\gamma_2+\gamma_2-4>0, & \\
15\gamma_1\gamma_2-3\gamma_2-11>0, &\\
26\gamma_1\gamma_2-2\gamma_2-23>0, & \\
			15\gamma_1\gamma_2-14>0.
\end{array}\right.
$$
For any sufficiently small $\varepsilon>0$, we have
$$
\sum_{\substack{(k,h) \in \ZZ^2 \backslash (0,0) \\ |k|<\cL X^{1-\gamma_2} \\ |h|<\cL X^{\gamma_2(1-\gamma_1)}}}|\cF_1| \ll X^{1-\varepsilon}.
$$
\end{proposition}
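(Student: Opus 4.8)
The plan is to reduce the asserted bound to an estimate for a single von~Mangoldt–weighted exponential sum with a \emph{smooth} phase, dispatch that sum with Vaughan's identity, and then treat the resulting Type~I and Type~II pieces by the exponent pair method and by Theorem~\ref{theorem: 1}, respectively. The first step is to remove the Fourier weights: the coefficients $A_j=\big(\sin(\pi j/W)/(\pi j/W)\big)^{r+1}$ obey $|A_j|\le\min\{1,(W/\pi j)^{r+1}\}$, so that $\sum_{j\ge 0}|A_j|\ll W$ and, since $r=\fl{\varepsilon^{-1}}+1$ is large, the tail $j>W\cL$ is negligible once any bound $|\Psi|\ll X^{1-\eta}$ is in hand. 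Expanding $\cF_1$, moving the triangle inequality inside the sums over $w$ and $j$, and using $\#\{0\le w\le W\}\ll W$, $\sum_j|A_j|\ll W$ and the prefactor $W^{-1}$, one is left with a net loss of size $\asymp W=X^{1-\gamma_1\gamma_2+\varepsilon}$; hence it suffices to prove, uniformly for $0\le w\le W$ and $0\le j\ll W\cL$, that
$$
\sum_{|k|<\cL X^{1-\gamma_2}}\ \sum_{|h|<\cL X^{\gamma_2(1-\gamma_1)}}\big|\Psi\big|\ll X^{\gamma_1\gamma_2-2\varepsilon},
$$
where $\Psi=\Psi(k,h,w,j)$ is the inner sum of $\cF_1$ displayed just before the statement.

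Next I would apply Vaughan's identity (Lemma~\ref{lem:vaughan}) with $U=V=X^{1/3}$ to write $\Psi$ as a linear combination of $O(\cL^{2})$ Type~I sums $\sum_{m\sim M'}a_m\sum_{mn\sim X}\mathbf{e}(\phi(mn))$ with $M'\le X^{2/3}$ and Type~II sums $\sum_{m\sim M'}\sum_{n\sim N'}a_mb_n\mathbf{e}(\phi(mn))$ with $X^{1/3}\le M'\le X^{2/3}$, where $\phi(t)=h((t+1)^{\gamma_2}-w/W)^{\gamma_1}+kt^{\gamma_2}+j(t+1)^{\gamma_2}$ and $a_m,b_n\ll X^{\varepsilon}$. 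The analytic point is that for each fixed $s\ge 2$,
$$
\phi^{(s)}(t)=\big(c_sh+O(|h|X^{-\gamma_2})\big)t^{\gamma_1\gamma_2-s}+c_s'(k+j)t^{\gamma_2-s}+O\big((|k|+j)X^{\gamma_2-s-1}\big)
$$
with non-zero constants $c_s,c_s'$ depending only on $\gamma_1,\gamma_2,s$; thus, after truncating the binomial expansion of $((t+1)^{\gamma_2}-w/W)^{\gamma_1}$, the phase $\phi(mn)$ is, up to a rapidly decaying tail, a sum of monomials in which the exponents of $m$ and of $n$ coincide, and Theorem~\ref{theorem: 1} becomes available for the Type~II sums with $\beta_j=\gamma_j$ and $\Delta\asymp\max\{|h|,\,|k+j|X^{\gamma_2-\gamma_1\gamma_2}\}$, while Lemma~\ref{lem: exponent pair method} applies to the inner sum of a Type~I piece after partial summation.

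Finally, following the proofs of Propositions~\ref{theorem: Type I} and~\ref{theorem: Type II}, I would split the $m$-range (equivalently the ranges of $k,h,j$) according to whether the relevant combined leading coefficient is small, of size at most $X^{2(1-\gamma_1\gamma_2)+O(\varepsilon)}$, or not. On the minor part Lemma~\ref{lemma: zhai's lemma} bounds the number of surviving $m$, which is acceptable after summation over $k,h,j$; on the major part I would apply the exponential sum estimates just mentioned and sum the resulting $\Delta$-dependent bounds over $k,h,j$ with the help of $\sum_{|h|<H}|h|^{-\rho}\ll H^{1-\rho}\cL$. Optimising the parameter $M'$ against the exponent pairs $(13/84,55/84)$ of Bourgain and $(11/28,11/21)$, with the trivial pairs $(0,1)$ and $(1/2,1/2)$ covering the extreme ranges, the four resulting sub-cases produce exactly the four inequalities $4\gamma_1\gamma_2+\gamma_2-4>0$, $15\gamma_1\gamma_2-3\gamma_2-11>0$, $26\gamma_1\gamma_2-2\gamma_2-23>0$ and $15\gamma_1\gamma_2-14>0$; under these the displayed estimate of the first step holds, whence $\sum_{k,h}|\cF_1|\ll X^{1-\varepsilon}$. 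The main obstacle throughout is the term $h((m+1)^{\gamma_2}-w/W)^{\gamma_1}$: unlike the pure power $hm^{\gamma_1\gamma_2}$ it drags in the auxiliary variables $w$ and $j$, and one must check that replacing $(m+1)^{\gamma_2}-w/W$ by its leading behaviour perturbs neither the order of $\phi^{(s)}$ needed for Lemma~\ref{lem: exponent pair method} nor the hypotheses of Theorem~\ref{theorem: 1}, all the while keeping every bound uniform in $w$ and summable over $k,h,j$; this bookkeeping is where essentially all of the work lies, and it is carried out in Section~\ref{section: 9}.
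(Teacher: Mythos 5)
Your proposal follows essentially the same route as the paper's proof: strip the Fourier weights at a net cost of $W$, reduce to $\sum_{k,h}|\Psi|\ll X^{\gamma_1\gamma_2-2\varepsilon}$ uniformly in $w$ and $j$, decompose $\Psi$ via Vaughan's identity, split each bilinear form according to the size of the combined coefficient $hX^{\gamma_1\gamma_2}(m/M)^{\gamma_1\gamma_2}+(k+j)X^{\gamma_2}(m/M)^{\gamma_2}$, treat the small-coefficient part by Lemma~\ref{lemma: zhai's lemma}, and treat the rest by the exponent pair method for Type~I and by Cauchy plus the $A$-process (the argument of Theorem~\ref{theorem: 1}) for Type~II, with the four sub-cases yielding the four stated inequalities. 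The only divergences are cosmetic: the paper reruns the proof of Theorem~\ref{theorem: 1} on the exact phase rather than invoking it after a binomial truncation, and it uses only the exponent pair $(1/2,1/2)$ throughout this proposition --- the pairs $(13/84,55/84)$ and $(11/28,11/21)$ you cite are used in Section~\ref{section: intersection P-S primes}, not here --- so your claim that those pairs produce \emph{exactly} these four inequalities is inaccurate, though immaterial to the validity of the plan.
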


\begin{proposition}\label{F2}
Let $\cF_2$ be defined as above. For any sufficiently small $\varepsilon>0$, we have
$$
\sum_{\substack{(k,h) \in \ZZ^2 \backslash (0,0) \\ |k|<\cL X^{1-\gamma_2} \\ |h|<\cL X^{\gamma_2(1-\gamma_1)}}}|\cF_2| \ll X^{1-\varepsilon/2}.
$$
\end{proposition}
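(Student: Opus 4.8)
The plan is to exploit the fact that $\cF_2$ does not depend on the outer summation variables $k$ and $h$ at all: it is the fixed quantity $(\lambda r+r)W^{-1}\sum_{X\le m<aX}\Lambda(m)$. Hence the sum over $(k,h)$ in the box $|k|<\cL X^{1-\gamma_2}$, $|h|<\cL X^{\gamma_2(1-\gamma_1)}$ merely multiplies $|\cF_2|$ by the number of lattice points in that box, which is $\ll \cL^2 X^{1-\gamma_2}\cdot X^{\gamma_2(1-\gamma_1)}=\cL^2 X^{1-\gamma_1\gamma_2}$.

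Next I would estimate $|\cF_2|$ directly. By the Chebyshev bound $\sum_{X\le m<aX}\Lambda(m)\le\psi(aX)\ll X$. The parameter $r=\fl{\varepsilon^{-1}}+1$ is a constant depending only on $\varepsilon$, and after Lemma~\ref{lemma: Kolesnik} one has $\lambda=O(\log X)$, so $\lambda r+r\ll_{\varepsilon}\log X$. With $W=X^{1-\gamma_1\gamma_2+\varepsilon}$ this gives
\[
|\cF_2|\ll_{\varepsilon}\frac{(\log X)\,X}{X^{1-\gamma_1\gamma_2+\varepsilon}}=X^{\gamma_1\gamma_2-\varepsilon}\log X .
\]
Multiplying by the lattice-point count,
\[
\sum_{\substack{(k,h)\in\ZZ^2\setminus(0,0)\\ |k|<\cL X^{1-\gamma_2}\\ |h|<\cL X^{\gamma_2(1-\gamma_1)}}}|\cF_2|\ll_{\varepsilon}\cL^2 X^{1-\gamma_1\gamma_2}\cdot X^{\gamma_1\gamma_2-\varepsilon}\log X=X^{1-\varepsilon}\cL^{3}.
\]
Since $\cL^{3}=(\log X)^{3}\le X^{\varepsilon/2}$ for all sufficiently large $X$, the right-hand side is $\ll X^{1-\varepsilon/2}$, which is the claimed bound.

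The hard part is essentially nonexistent here: unlike Proposition~\ref{F1}, no cancellation in an exponential sum is needed — only the trivial bound for $\sum\Lambda(m)$, the size of the smoothing modulus $W$, and a count of lattice points. The only minor points to watch are that the implied constant and $r$ depend on the fixed $\varepsilon$ (harmless), and that the three stray $\log X$ factors — two from the box over $(k,h)$ and one from $\lambda r+r$ together with the trivial prime bound — are absorbed when passing from the exponent $1-\varepsilon$ to $1-\varepsilon/2$.
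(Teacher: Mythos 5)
Your proposal is correct and is essentially identical to the paper's (one-line) argument: bound $\cF_2$ trivially by $rX\log X/W$ using $\lambda=O(\log X)$ and the Chebyshev bound, then multiply by the $\ll\cL^2X^{1-\gamma_1\gamma_2}$ lattice points in the $(k,h)$-box and absorb the logarithms into $X^{\varepsilon/2}$. No further comment is needed.
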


\begin{proposition}\label{F3}
Let $\cF_3$ be defined as above, if $\gamma_1$ and $\gamma_2$ satisfy that
$$
\left\{\begin{array}{ll}
	9\gamma_1\gamma_2-3\gamma_2-5>0, & \\
	6\gamma_1\gamma_2-5>0, & \\
	14\gamma_1\gamma_2-2\gamma_2-11>0. &
\end{array}\right.
$$
For any sufficiently small $\varepsilon>0$, we have
$$
\sum_{\substack{(k,h) \in \ZZ^2 \backslash (0,0) \\ |k|<\cL X^{1-\gamma_2} \\ |h|<\cL X^{\gamma_2(1-\gamma_1)}}}|\cF_3| \ll X^{1-\varepsilon/2}.
$$
\end{proposition}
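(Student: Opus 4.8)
The plan is to strip $\cF_3$ of the Kolesnik kernel and then treat the resulting von Mangoldt exponential sum by Vaughan's identity, along the same Type~I/Type~II lines as in Section~\ref{section: intersection P-S primes}. Since the summand of $\cF_3$ involves neither $k$ nor $h$, the outer sum merely multiplies by
$$
\#\bigl\{(k,h)\in\ZZ^2\setminus\{(0,0)\}:|k|<\cL X^{1-\gamma_2},\ |h|<\cL X^{\gamma_2(1-\gamma_1)}\bigr\}\ll\cL^{2}X^{1-\gamma_1\gamma_2},
$$
so it is enough to show $\cF_3\ll X^{\gamma_1\gamma_2-\varepsilon}$. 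Writing the $j$-th term of $\cF_3$ as $\tfrac{r}{W}c_jS_j$ with
$$
c_j=\frac{\sin(2\pi rj/W)}{\sin(\pi j/W)}\Bigl(\frac{\sin(\pi j/W)}{\pi j/W}\Bigr)^{r+1},\qquad
S_j=\sum_{X<m\le aX}\Lambda(m)\,\e\bigl(j(m+1)^{\gamma_2}\bigr),
$$
one has $c_0=2r$ and $|c_j|\ll r\min\{1,(W/j)^{r+1}\}$; hence $j=0$ contributes $\ll r^{2}W^{-1}\sum_{X<m\le aX}\Lambda(m)\ll X^{\gamma_1\gamma_2-\varepsilon/2}$, and, by the trivial bound $|S_j|\le\sum_{X<m\le aX}\Lambda(m)\ll X$ and $r\varepsilon\ge1$, the tail $j>W^{1+\varepsilon}$ contributes $\ll X^{\gamma_1\gamma_2-\varepsilon/2}$. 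Since $|c_j|\ll r$ for $1\le j\le J:=W^{1+\varepsilon}=X^{1-\gamma_1\gamma_2+O(\varepsilon)}$, it remains to prove $\sum_{1\le j\le J}|S_j|\ll X^{1+\varepsilon/2}$.

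To this end I would run the argument of Section~\ref{section: intersection P-S primes} with the single-term phase $j(m+1)^{\gamma_2}$ in place of $h_1m^{\gamma_1}+\cdots+h_rm^{\gamma_r}$. Vaughan's identity (Lemma~\ref{lem:vaughan}) with $U=V=X^{1/3}$ writes $S_j$ as a combination of $O(\log X)$ Type~I sums $\sum_{d\le D}a_d\sum_{de\sim X}\e(j(de+1)^{\gamma_2})$, $D\ll X^{2/3}$, and Type~II sums $\sum_{d\sim D}\sum_{e\sim E}a_db_e\e(j(de+1)^{\gamma_2})$, $X^{1/3}\ll D\ll X^{2/3}$, $DE\asymp X$, with coefficients $\ll X^{\varepsilon}$; since $(de+1)^{\gamma_2}=(de)^{\gamma_2}+O((de)^{\gamma_2-1})$ the correction is of strictly lower order in every variable, so in the derivative tests the phase behaves like $j(de)^{\gamma_2}$. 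For the Type~I sums I would freeze $d$ and apply the exponent pair method (Lemma~\ref{lem: exponent pair method}) to the sum over $e$, whose $s$-th derivative is $\asymp jX^{\gamma_2}E^{-s}$, and then sum over $d\sim D$ and over $j\le J$ using $\sum_{j\le J}j^{\kappa}\ll J^{1+\kappa}$; this is even simpler than Proposition~\ref{theorem: Type I}, since with one monomial and $j\ge1$ the phase has no small derivatives, so no dissection into $\cI_1,\cI_2$ is needed. For the Type~II sums I would invoke Theorem~\ref{theorem: 1} with $d=1$, $\beta_1=\gamma_1=\gamma_2$, $E_1=jX^{\gamma_2}$ (so $\Delta\asymp jX^{\gamma_2}$), then sum the four-term bound over $j\le J$ and balance the splitting point $D$ by Lemma~\ref{lemmea: balance}, exactly as in Proposition~\ref{theorem: Type II}.

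Collecting the pieces, the Type~I and Type~II estimates reduce to finitely many powers of $X$; requiring each to be $\ll X^{1+\varepsilon/2}$ gives finitely many linear inequalities in $\gamma_2$ and $\gamma_1\gamma_2$, and after discarding the implied ones exactly $9\gamma_1\gamma_2-3\gamma_2-5>0$, $6\gamma_1\gamma_2-5>0$ and $14\gamma_1\gamma_2-2\gamma_2-11>0$ survive. For example the $j$-free term $M^{1/2}X^{1/2}$ of Theorem~\ref{theorem: 1}, multiplied by $J$ and evaluated at the extreme $M\asymp X^{2/3}$, is $\ll X^{11/6-\gamma_1\gamma_2+\varepsilon}$, which is $\ll X^{1+\varepsilon/2}$ precisely when $6\gamma_1\gamma_2-5>0$. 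I expect this final bookkeeping to be the main obstacle: the phase carries the potentially large parameter $j$ and one sums over $j\le J$ with $J=X^{1-\gamma_1\gamma_2+O(\varepsilon)}$, so every term that does not decay in $j$ is inflated by $J$ (or by $J^{1+\kappa}$, and so on), and the exponent pair and the balancing parameter must be chosen so that all exponents stay $\le1+O(\varepsilon)$ under exactly the stated hypotheses. The analysis is parallel to, and weaker than, the one behind Proposition~\ref{F1}; indeed the three conditions above are implied by those of Proposition~\ref{F1} together with $1<c_1,c_2<2$, so Proposition~\ref{F3} imposes no new restriction on $c_1,c_2$ in Theorem~\ref{theorem: 3}.
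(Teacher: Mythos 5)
Your proposal follows essentially the same route as the paper: split off the $j=0$ term and the tail $j>W^{1+\varepsilon}$ using the decay of the Kolesnik kernel, then bound $\sum_{X<m\le aX}\Lambda(m)\e(j(m+1)^{\gamma_2})$ for $j\le W^{1+\varepsilon}$ via Vaughan's identity with Type~I/Type~II estimates from the exponent pair $(1/2,1/2)$, the three hypotheses emerging from exactly the bookkeeping you describe (e.g.\ your $X^{5/6}$ term against $6\gamma_1\gamma_2-5>0$ matches the paper's). The only cosmetic difference is that you invoke Theorem~\ref{theorem: 1} for the Type~II sums while the paper runs the Cauchy--Weyl--van der Corput process directly, which amounts to the same computation.
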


\subsection{Complete the proof of Theorem \ref{theorem: 3}}

Assume that Propositions~\ref{F1}, \ref{F2} and \ref{F3}
hold, i,e.
\begin{align}\label{eq: all conditions}
	\left\{\begin{array}{ll}
		9\gamma_1\gamma_2-3\gamma_2-5>0, & \\
		6\gamma_1\gamma_2-5>0, & \\
		14\gamma_1\gamma_2-2\gamma_2-11>0, & \\
		4\gamma_1\gamma_2+\gamma_2-4>0, & \\
15\gamma_1\gamma_2-3\gamma_2-11>0, &\\
26\gamma_1\gamma_2-2\gamma_2-23>0, & \\
			15\gamma_1\gamma_2-14>0.
	\end{array}\right.
\end{align}
This shows that \eqref{eq: main goal in intersection ps} holds provided that $\gamma_1$ and $\gamma_2$ satisfy \eqref{eq: all conditions}. We immediately obtain
\begin{align*}
	\sum_{\substack{(k,h) \in \ZZ^2 \backslash (0,0) \\ |k|<\cL X^{1-\gamma_2} \\ |h|<\cL X^{\gamma_2(1-\gamma_1)}}} \Big| \sum_{X < m \leqslant aX} \Lambda(m) \e (km^{\gamma_2} + h\lfloor(m+1)^{\gamma_2}\rfloor^{\gamma_1})\Big|\ll X^{1-\varepsilon/3}.
\end{align*}
In fact,
$$
26\gamma_1\gamma_2-2\gamma_2-23<
\left\{\begin{array}{ll}
	9\gamma_1\gamma_2-3\gamma_2-5, & \\
	6\gamma_1\gamma_2-5, & \\
	14\gamma_1\gamma_2-2\gamma_2-11, & \\
	4\gamma_1\gamma_2+\gamma_2-4, & \\
15\gamma_1\gamma_2-3\gamma_2-11, &\\
			15\gamma_1\gamma_2-14. & \\
\end{array}\right.
$$
Therefore, if $26\gamma_1\gamma_2-2\gamma_2-23>0$, we have
$$
\cP(X;c_1,c_2)=(1+o(1))\frac{X^{\gamma_1\gamma_2}}{c_1c_2\log X}.
$$
This completes the proof of Theorem \ref{theorem: 3}.

\section{Proof of Proposition~\ref{F1}, \ref{F2} and \ref{F3} } \label{section: 9}

In this section we will give the proof of proposition~\ref{F1}, \ref{F2} and \ref{F3}.
\subsection{Proof of Proposition~\ref{F1}}
Proposition~\ref{F1} is proved by a technical simplification and then solved by a similar construction of Theorem \ref{theorem: 1}. We divide $\cF_1$ into the following three parts:
\begin{align*}
	\cF_1=\cF_{11}+\cF_{12}+\cF_{13},
\end{align*}
where the summation is taken over $j\leqslant \varepsilon W$ in $\cF_{11}$,
$\varepsilon W<j\leqslant W^{1+2/r}$ in $\cF_{12}$ and $j>W^{1+2/r}$
in $\cF_{13}$. For convenience, we also denote
$$
\Psi:=\sum_{X < m \leqslant aX}
\Lambda(m)\mathbf{e}\Big(h\Big((m+1)^{\gamma_2}-\frac{w}{W}\Big)^{\gamma_1}
+km^{\gamma_2}+j(m+1)^{\gamma_2}\Big),
$$
and
$$
\mathfrak{T}:=\mathop{\sum_{m\sim M}\sum_{n\sim N}}_{X < mn \leqslant aX}
a_mb_n\mathbf{e}\Big(h\Big((mn+1)^{\gamma_2}-\frac{w}{W}\Big)^{\gamma_1}
+k(mn)^{\gamma_2}+j(mn+1)^{\gamma_2}\Big),
$$
where $a_m$ and $b_n$ are real coefficients such that
$a_m\ll X^{\varepsilon}$. If $b_n=1$ or $\log n$ we write it as $\mathfrak{T}_{I}$.
If $b_n\ll X^{\varepsilon}$, we write it as $\mathfrak{T}_{II}$.

We apply the same method as Theorem \ref{theorem: 1} (also see Theorem \ref{theorem: Type I}
and Theorem \ref{theorem: Type II}) to estimate $\mathfrak{T}_{I}$ and $\mathfrak{T}_{II}$. Using
the exponent pair method we obtain the following two lemmas.

\begin{lemma}\label{lemma: Gamma I} Suppose that $M<X^{1/3}$,
$W=X^{1-\gamma_1\gamma_2+\varepsilon}$, $j<W^{1+2\varepsilon}$, and
	\begin{align}\label{eq: condition 2}
		\left\{\begin{array}{ll} 4\gamma_1\gamma_2+\gamma_2-4>0, & \\
			15\gamma_1\gamma_2-3\gamma_2-11>0, &
		\end{array}\right.
	\end{align}
	we have
	\begin{align*}
		\sum_{\substack{(k,h) \in \ZZ^2 \backslash (0,0) \\ |k|<\cL X^{1-\gamma_2} \\ |h|<\cL X^{\gamma_2(1-\gamma_1)}}}
		|\mathfrak{T}_{I}|\ll X^{\gamma_1\gamma_2-2\varepsilon}.
	\end{align*}
\end{lemma}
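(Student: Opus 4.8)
The plan is to adapt the proof of Proposition~\ref{theorem: Type I} to the exponential sum at hand. Since $b_n\in\{1,\log n\}$, a partial summation removes $b_n$ and reduces the estimate to bounding, for each $m\sim M$ and each pair $(k,h)$ (with $w$ and $j$ fixed),
$$\Bigl|\sum_{\substack{n\in I\\ X<mn\le aX}}\e\bigl(F_m(n)\bigr)\Bigr|,\qquad F_m(n)=h\Bigl((mn+1)^{\gamma_2}-\tfrac{w}{W}\Bigr)^{\gamma_1}+k(mn)^{\gamma_2}+j(mn+1)^{\gamma_2},$$
where $I$ is a subinterval of $(N,2N]$. The first step is to linearise $F_m$: since $(mn+1)^{\gamma_2}=(mn)^{\gamma_2}+O(X^{\gamma_2-1})$ and $w/W=O(1)$ is small next to $(mn)^{\gamma_2}\asymp X^{\gamma_2}$, a Taylor expansion gives $F_m(n)=h(mn)^{\gamma_1\gamma_2}+(k+j)(mn)^{\gamma_2}+(\text{lower-order power functions of }mn)$. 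Hence, for an integer $s\ge 2$,
$$F_m^{(s)}(n)\asymp \Delta\,N^{-s},\qquad \Delta:=|h|X^{\gamma_1\gamma_2}+|k+j|X^{\gamma_2},$$
on each of $O(1)$ subintervals of $I$ — the two leading pieces live on the distinct scales $n^{\gamma_1\gamma_2}$ and $n^{\gamma_2}$ (distinct since $\gamma_1\ne1$), so $F_m^{(s)}$ changes sign at most once on $(N,2N]$ — with the one proviso that one must watch the degenerate tuples $k=-j$, where the $(mn)^{\gamma_2}$-term drops out and the next term $j\gamma_2 m^{\gamma_2}n^{\gamma_2-1}$ becomes the leading $\gamma_2$-scale contribution.

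With this in hand, Lemma~\ref{lem: exponent pair method} gives, on each subinterval, $\sum_n\e(F_m(n))\ll(\Delta/N)^{\kappa}N^{\lambda}+N\Delta^{-1}$ for any exponent pair $(\kappa,\lambda)$, just as in Theorem~\ref{theorem: 1} and Proposition~\ref{theorem: Type I}. I would then split the sum over $(k,h)$ according to the size of $\Delta$. For the ``minor'' tuples with $\Delta<X^{\theta}$, a thin slab in $(k,h)$-space, I bound the inner sum trivially by $|I|\ll N$ and count lattice points; where this is too wasteful — exactly the range $|k+j|$ small — I invoke Lemma~\ref{lemma: zhai's lemma}, with the continuous variable $m/M$ and exponents $\gamma_1\gamma_2,\gamma_2$, to show that the set of $m$ making $F_m^{(s)}$ small has small measure. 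For the ``major'' tuples with $\Delta\ge X^{\theta}$ the exponent-pair bound is genuinely saving; summing $(\Delta/N)^{\kappa}N^{\lambda}+N\Delta^{-1}$ over $m\sim M$, over $|k|<\cL X^{1-\gamma_2}$, $|h|<\cL X^{\gamma_2(1-\gamma_1)}$ and over $j<W^{1+2\varepsilon}$ — estimating $\sum_{|k|<K}|k+j|^{\pm\kappa}$ and $\sum_{|h|<H}|h|^{\pm\kappa}$ elementarily and using $N\ge X^{2/3}$ from $M<X^{1/3}$ — the bound $\ll X^{\gamma_1\gamma_2-2\varepsilon}$ follows from a short list of inequalities in $\gamma_1,\gamma_2$, which after optimising the exponent pair, $s$ and $\theta$ amount to the two conditions in \eqref{eq: condition 2}.

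The step I expect to be the main obstacle is the bookkeeping around the degenerate range $k\approx-j$ with $h$ small: there $\Delta$ is governed only by $|h|\ge1$ (small) and the secondary coefficient $jX^{\gamma_2-1}$, so the naive trivial bound on the inner sum does not suffice and one must show, via Lemma~\ref{lemma: zhai's lemma} or a first-derivative (Kuzmin--Landau type) estimate for the two-term power sum, that these tuples are genuinely sparse; simultaneously one has to verify that all the lower-order power functions thrown off by the Taylor expansion of $h((mn+1)^{\gamma_2}-w/W)^{\gamma_1}$ are harmless. The remainder — the partial summation, the applications of Lemma~\ref{lem: exponent pair method}, and the final optimisation producing \eqref{eq: condition 2} — is routine, following the template already set in Proposition~\ref{theorem: Type I}.
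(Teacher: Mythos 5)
Your proposal follows essentially the same route as the paper: for fixed $m$ the $s$-th derivative of the phase in $n$ is a two-term power combination $\asymp |c_1hX^{\gamma_1\gamma_2}+c_2(k+j)X^{\gamma_2}|N^{-s}$, one excises the exceptional set where this combination nearly cancels via Lemma \ref{lemma: zhai's lemma} (the paper's split $m\in\mathcal{J}_1$ versus $\mathcal{J}_2$, driven by the threshold $X^{2-2\gamma_1\gamma_2+6\varepsilon}$), applies Lemma \ref{lem: exponent pair method} with $(\kappa,\lambda)=(1/2,1/2)$ on the complement, and the two inequalities in \eqref{eq: condition 2} fall out of the two resulting bounds. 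The only imprecision is your intermediate claim that $F_m^{(s)}\asymp(|h|X^{\gamma_1\gamma_2}+|k+j|X^{\gamma_2})N^{-s}$ on $O(1)$ subintervals — near a sign change the derivative is genuinely small and no subdivision restores this — but your fallback to Zhai's lemma for the degenerate tuples is exactly the correct (and the paper's) repair.
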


\begin{proof} For a fixed $m\sim M$, we assume that $x\sim N$ and
	$$
	f(x)=h\Big((mx+1)^{\gamma_2}-\frac{w}{W}\Big)^{\gamma_1}
	+k(mx)^{\gamma_2}+j(mx+1)^{\gamma_2}.
	$$
	Clearly,
	\begin{align*}
		f^{(s)}(x)\asymp \bigg|&\gamma_1\gamma_2(\gamma_1\gamma_2-1)\cdots(\gamma_1\gamma_2-s+1)
		hX^{\gamma_1\gamma_2}\\
		&+\gamma_2(\gamma_2-1)\cdots(\gamma_2-s+1)(j+k)X^{\gamma_2}\bigg|N^{-1}\cdot N^{1-s}.
	\end{align*}
	
	We divide the range $m\sim M$ into the following two parts:
	$$
	\mathcal{J}_1:=\left\{m\sim M: \left|hX^{\gamma_1\gamma_2}\left(\frac{m}{M}\right)
	^{\gamma_1\gamma_2}
	+(k+j)X^{\gamma_2}\left(\frac{m}{M}\right)^{\gamma_2}\right|<
	X^{2-2\gamma_1\gamma_2+6\varepsilon}\right\}
	$$
	and
	$$
	\mathcal{J}_2:=\{m\sim M: m\notin\mathcal{J}_1\}.
	$$
	Then we have
	\begin{align*}
		\mathfrak{T}_{I}=\mathfrak{T}_{I,1}+\mathfrak{T}_{II,2},
	\end{align*}
	where
	$$
	\mathfrak{T}_{I,1}:=\mathop{\sum_{m\sim M}}_{m\in\mathcal{J}_1}
    \mathop{\sum_{n\sim N}}_{X < mn \leqslant aX}
	a_mb_n\mathbf{e}\Big(h\left((mn+1)^{\gamma_2}-\frac{w}{W}\right)^{\gamma_1}
	+k(mn)^{\gamma_2}+j(mn+1)^{\gamma_2}\Big).
	$$
	and
	$$
	\mathfrak{T}_{I,2}:=\mathop{\sum_{m\sim M}}_{m\in\mathcal{J}_2}
    \mathop{\sum_{n\sim N}}_{X < mn \leqslant aX}
	a_mb_n\mathbf{e}\Big(h\left((mn+1)^{\gamma_2}-\frac{w}{W}\right)^{\gamma_1}
	+k(mn)^{\gamma_2}+j(mn+1)^{\gamma_2}\Big).
	$$
	
	We begin with the sum $\mathfrak{T}_{I,1}$. Clearly,
	\begin{align*}
		\mathfrak{T}_{I,1}\ll X^{2\varepsilon}N\mathop{\sum_{m\sim M}}_{m\in\mathcal{J}_1}1
		\ll X^{2\varepsilon}N\#\mathcal{J}_1.
	\end{align*}
	It follows from Lemma \ref{lemma: zhai's lemma} that
	\begin{align*}
		\#\mathcal{J}_1\ll \left(\frac{X^{2-2\gamma_1\gamma_2+6\varepsilon}}{|hX^{\gamma_1\gamma_2}|
			+|(j+k)X^{\gamma_2}|}\right)M.
	\end{align*}
   This can be bounded by considering two cases: $h\neq0$ with $j+k=0$; or $h=0$
   with $j+k\neq0$. By
	\begin{align*}
		4\gamma_1\gamma_2+\gamma_2-4>0
	\end{align*}
	we have
	\begin{align*}
		\sum_{\substack{(k,h) \in \ZZ^2 \backslash (0,0) \\ |k|<\cL X^{1-\gamma_2} \\
        |h|<\cL X^{\gamma_2(1-\gamma_1)}}}
		|\mathfrak{T}_{I,1}|\ll X^{\gamma_1\gamma_2-2\varepsilon}.
	\end{align*}
	
	Now we estimate $\mathfrak{T}_{I,2}$. Let
	$$
	\Delta_3:=\bigg|hX^{\gamma_1\gamma_2}\left(\frac{m}{M}\right)^{\gamma_1\gamma_2}
	+(k+j)X^{\gamma_2}\left(\frac{m}{M}\right)^{\gamma_2}\bigg|N^{-1}.
	$$
	The condition $m\in\mathcal{J}_2$ implies that
    $X^{2-2\gamma_1\gamma_2+6\varepsilon} N^{-1}\ll \Delta_3\ll
	X^{1-\gamma_1\gamma_2+\gamma_2+2\varepsilon} N^{-1}$.
	Applying a partial summation and Lemma \ref{lem: exponent pair method} with $(\kappa,\lambda)=(1/2,1/2)$
	we have
	$$
	\mathop{\sum_{n\sim N}}_{X < mn \leqslant aX}b_n\mathbf{e}(f(n))
	\ll X^{(1-\gamma_1\gamma_2+\gamma_2)/2+2\varepsilon}+X^{-2+2\gamma_1\gamma_2-5\varepsilon} N.
	$$
	Therefore, the assumptions $M<X^{1/3}$ and
	\begin{align*}
		15\gamma_1\gamma_2-3\gamma_2-11>0
	\end{align*}
	imply that
	\begin{align*}
		\sum_{\substack{(k,h) \in \ZZ^2 \backslash (0,0) \\ |k|<\cL X^{1-\gamma_2} \\ |h|<\cL X^{\gamma_2(1-\gamma_1)}}}|\mathfrak{T}_{I,2}|
		\ll X^{\gamma_1\gamma_2-2\varepsilon}.
	\end{align*}

	Therefore,
	\begin{align*}
		\sum_{\substack{(k,h) \in \ZZ^2 \backslash (0,0) \\ |k|<\cL X^{1-\gamma_2} \\ |h|<\cL X^{\gamma_2(1-\gamma_1)}}}
		|\mathfrak{T}_{I}|\ll
\sum_{\substack{(k,h) \in \ZZ^2 \backslash (0,0) \\ |k|<\cL X^{1-\gamma_2} \\ |h|<\cL X^{\gamma_2(1-\gamma_1)}}}
		(|\mathfrak{T}_{I,1}|+|\mathfrak{T}_{I,2}|)\ll
X^{\gamma_1\gamma_2-2\varepsilon}.
	\end{align*}
\end{proof}

\begin{lemma}\label{lemma: Gamma II} Assume that $X^{1/2}<M<X^{2/3}$, $W=X^{1-\gamma_1\gamma_2+\varepsilon}$, $j<W^{1+2\varepsilon}$ and
	\begin{align}\label{eq: condition 3}
		\left\{\begin{array}{ll} 26\gamma_1\gamma_2-2\gamma_2-23>0, & \\
			15\gamma_1\gamma_2-14>0,
		\end{array}\right.
	\end{align}
	we have
	\begin{align*}
		\sum_{\substack{(k,h) \in \ZZ^2 \backslash (0,0) \\ |k|<\cL X^{1-\gamma_2} \\ |h|<\cL X^{\gamma_2(1-\gamma_1)}}}|\mathfrak{T}_{II}|\ll X^{\gamma_1\gamma_2-2\varepsilon}.
	\end{align*}
\end{lemma}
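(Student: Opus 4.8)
The plan is to follow the proof of Proposition~\ref{theorem: Type II}, which uses the Cauchy inequality, the Weyl-van der Corput inequality, and the exponent-pair method, exactly as in Theorem~\ref{theorem: 1}, run for the phase at hand, $f(t)=h((t+1)^{\gamma_2}-w/W)^{\gamma_1}+kt^{\gamma_2}+j(t+1)^{\gamma_2}$ evaluated at $t=mn$. As in the proof of Lemma~\ref{lemma: Gamma I}, one first splits the range $m\sim M$ into
$$
\mathcal{J}_1=\Big\{m\sim M:\ \big|hX^{\gamma_1\gamma_2}(m/M)^{\gamma_1\gamma_2}+(k+j)X^{\gamma_2}(m/M)^{\gamma_2}\big|<X^{2-2\gamma_1\gamma_2+6\varepsilon}\Big\}
$$
and its complement $\mathcal{J}_2$, writing $\mathfrak{T}_{II}=\mathfrak{T}_{II,1}+\mathfrak{T}_{II,2}$ accordingly.

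On $\mathcal{J}_1$ one uses the trivial bound $\mathfrak{T}_{II,1}\ll X^{2\varepsilon}N\,\#\mathcal{J}_1$, estimates $\#\mathcal{J}_1$ by Lemma~\ref{lemma: zhai's lemma} with $d=2$, and sums over $0<|k|<\cL X^{1-\gamma_2}$ and $0<|h|<\cL X^{\gamma_2(1-\gamma_1)}$ (the parameter $j<W^{1+2\varepsilon}$ being fixed), separating the regimes in which the $h$-term and the $(j+k)$-term dominate the denominator exactly as for $\mathfrak{T}_{I,1}$ in Lemma~\ref{lemma: Gamma I}. Since $W=X^{1-\gamma_1\gamma_2+\varepsilon}$ and the inequality $4\gamma_1\gamma_2+\gamma_2-4>0$ follows from $26\gamma_1\gamma_2-2\gamma_2-23>0$ together with $\gamma_2>1/2$, this gives $\sum_{(k,h)}|\mathfrak{T}_{II,1}|\ll X^{\gamma_1\gamma_2-2\varepsilon}$.

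The substantive part is $\mathfrak{T}_{II,2}$, treated by the method of Theorem~\ref{theorem: 1}: the Cauchy inequality in $m$, then the Weyl-van der Corput inequality (Lemma~\ref{lem:A}) in $n$ with a parameter $Q\le N$ at our disposal, then, for fixed $n$, shift $q$ and fixed integer $s\ge 2$, the exponent-pair lemma (Lemma~\ref{lem: exponent pair method}) applied to $\sum_{m}\e(\phi_{n,q}(m))$ with $\phi_{n,q}(m)=f(m(n+q))-f(mn)$, for which
$$
\phi_{n,q}^{(s)}(m)\asymp\Delta_4\,|q|\,X^{-1}\,M^{1-s},\qquad
\Delta_4\asymp\big|hX^{\gamma_1\gamma_2}(m/M)^{\gamma_1\gamma_2}+(k+j)X^{\gamma_2}(m/M)^{\gamma_2}\big|,
$$
using $(n+q)^{\alpha}-n^{\alpha}\asymp|q|N^{\alpha-1}$ and that the shift $w/W\le 1$ and the gap between $(mn+1)^{\gamma_2}$ and $(mn)^{\gamma_2}\asymp X^{\gamma_2}$ are negligible. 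After a dyadic splitting of $\mathcal{J}_2$ according to the size of $\Delta_4$ (treating trivially the short pieces on which $\Delta_4$ is near its lower bound $X^{2-2\gamma_1\gamma_2+6\varepsilon}$, and noting $\Delta_4\ll X^{1-\gamma_1\gamma_2+\gamma_2+2\varepsilon}$), one obtains on each remaining piece a bound of the shape of Theorem~\ref{theorem: 1} with $\Delta$ replaced by the relevant dyadic value $D\asymp\Delta_4$, namely, after balancing $Q$ over $(0,N]$ by Lemma~\ref{lemmea: balance},
$$
X^{O(\varepsilon)}\Big(M^{1/2}X^{1/2}+D^{-1/2}X+M^{(\lambda-\kappa-1)/(2\kappa+2)}X+D^{\kappa/(2\kappa+2)}M^{(\lambda-1)/(2\kappa+2)}X^{(2+\kappa)/(2\kappa+2)}\Big)
$$
for any exponent pair $(\kappa,\lambda)$.

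It then remains to insert a suitable exponent pair (for instance $(\kappa,\lambda)=(11/28,11/21)$, as in Proposition~\ref{theorem: Type II}), to sum over the dyadic values $D$ and over $0<|k|<\cL X^{1-\gamma_2}$, $0<|h|<\cL X^{\gamma_2(1-\gamma_1)}$, and to combine with the bound for $\mathfrak{T}_{II,1}$; the hypotheses $X^{1/2}<M<X^{2/3}$ together with $26\gamma_1\gamma_2-2\gamma_2-23>0$ and $15\gamma_1\gamma_2-14>0$ then make every contribution $\ll X^{\gamma_1\gamma_2-2\varepsilon}$, which is the assertion. I expect the heart of the argument, and the main source of delicacy, to be the analysis on $\mathcal{J}_2$: establishing the uniform asymptotic for $\phi_{n,q}^{(s)}$, and above all choosing the dyadic threshold between the trivial and the exponent-pair treatments correctly, since the factor $D^{-1/2}X$ is largest precisely where the pieces are shortest. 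The remaining obstacle is the closing bookkeeping, in which all of the resulting terms must be reconciled, after the sum over $D$ and over $(k,h)$, with exactly the two stated polynomial conditions in $\gamma_1,\gamma_2$.
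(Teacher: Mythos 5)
Your overall architecture matches the paper's: split $m\sim M$ according to the size of the coefficient $\Delta_4=\big|hX^{\gamma_1\gamma_2}(m/M)^{\gamma_1\gamma_2}+(k+j)X^{\gamma_2}(m/M)^{\gamma_2}\big|$, treat the small-coefficient set trivially via Lemma \ref{lemma: zhai's lemma}, and treat the rest by Cauchy, the A-process and an exponent pair exactly as in Theorem \ref{theorem: 1}. But the one quantitative decision you defer --- ``choosing the dyadic threshold between the trivial and the exponent-pair treatments correctly'' --- is precisely the content of the lemma, and the cutoff you actually write down is too small. With the Type I cutoff $X^{2-2\gamma_1\gamma_2+6\varepsilon}$, on the range where $\Delta_4\asymp D\asymp X^{2-2\gamma_1\gamma_2}$ the reciprocal term $D^{-1/2}X$ in your Theorem \ref{theorem: 1}--shaped bound is $\asymp X^{\gamma_1\gamma_2}$ for a single pair $(k,h)$; summing over the $\asymp \cL^2X^{1-\gamma_1\gamma_2}$ admissible pairs gives $\asymp X$, which is not $\ll X^{\gamma_1\gamma_2-2\varepsilon}$ since $\gamma_1\gamma_2<1$. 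So the low-$\Delta_4$ range cannot be handed to the exponent-pair machinery at that height, and your observation that the trivial part only needs $4\gamma_1\gamma_2+\gamma_2-4>0$ is a symptom of the too-small cutoff rather than a saving.

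The paper resolves this by taking the much larger threshold $X^{12+\gamma_2-13\gamma_1\gamma_2+24\varepsilon}$ for the Type II split; the trivial bound on the correspondingly enlarged set $\mathcal{J}_3$ is then exactly what forces the hypothesis $15\gamma_1\gamma_2-14>0$, while $26\gamma_1\gamma_2-2\gamma_2-23>0$ arises from the $\Delta_4^{\kappa}$-term with the upper bound $\Delta_4\ll X^{1+\gamma_2-\gamma_1\gamma_2+2\varepsilon}$ (coming from $j<W^{1+2\varepsilon}$) and the exponent pair $(1/2,1/2)$, together with $X^{1/2}<M<X^{2/3}$; no dyadic decomposition over $\Delta_4$ is needed, only the two endpoint bounds. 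Your framework can be completed by importing that threshold --- your ``trivial short pieces'' must extend all the way up to it, and their total contribution is then controlled precisely by $15\gamma_1\gamma_2-14>0$ --- but as written the argument is incomplete at its decisive step, and the two stated polynomial conditions cannot be recovered from the cutoff you chose. (Your proposed exponent pair $(11/28,11/21)$ would also need to be re-audited term by term; the paper's two conditions are calibrated to $(1/2,1/2)$.)
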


\begin{proof}
	We divide the range $m\sim M$ into the following two parts:
	$$
	\mathcal{J}_3=\bigg\{m\sim M:
	\bigg|hX^{\gamma_1\gamma_2}
	\Big(\frac{m}{M}\Big)^{\gamma_1\gamma_2}
	+(k+j)X^{\gamma_2}\Big(\frac{m}{M}\Big)^{\gamma_2}\bigg|
	<X^{12+\gamma_2-13\gamma_1\gamma_2+24\varepsilon}\bigg\}
	$$
	and
	$$
	\mathcal{J}_4=\{m\sim M: m\notin\mathcal{J}_3\}.
	$$
	Then we have
	\begin{align*}
		\mathfrak{T}_{II}=\mathfrak{T}_{II,1}+\mathfrak{T}_{II,2},
	\end{align*}
	where
	$$
	\mathfrak{T}_{II,1}=\mathop{\sum_{m\sim M}}_{m\in\mathcal{J}_3}\mathop{\sum_{n\sim N}}_{X < mn \leqslant aX}
	a_mb_n\mathbf{e}\Big(h\Big((mn+1)^{\gamma_2}-\frac{w}{W}\Big)^{\gamma_1}
	+k(mn)^{\gamma_2}+j(mn+1)^{\gamma_2}\Big),
	$$
	and
	$$
	\mathfrak{T}_{II,2}=\mathop{\sum_{m\sim M}}_{m\in\mathcal{J}_4}\mathop{\sum_{n\sim N}}_{X < mn \leqslant aX}
	a_mb_n\mathbf{e}\left(h\left((mn+1)^{\gamma_2}-\frac{w}{W}\right)^{\gamma_1}
	+k(mn)^{\gamma_2}+j(mn+1)^{\gamma_2}\right).
	$$
	
	We begin with $\mathfrak{T}_{II,1}$. Clearly,
	\begin{align*}
		\mathfrak{T}_{II,1}\ll X^{2\varepsilon}N\mathop{\sum_{m\sim M}}_{m\in\mathcal{J}_3}1
		\ll X^{2\varepsilon}N\#\mathcal{J}_3.
	\end{align*}
	It follows from Lemma \ref{lemma: zhai's lemma} that
	\begin{align*}
		\#\mathcal{J}_3\ll \bigg(\frac{X^{12+\gamma_2-13\gamma_1\gamma_2+24\varepsilon}}
		{|hX^{\gamma_1\gamma_2}|
			+|(k+j)X^{\gamma_2}|}\bigg)M.
	\end{align*}
This can be bounded by considering two cases: $h\neq0$ with $j+k=0$; or $h=0$
   with $j+k\neq0$. By $15\gamma_1\gamma_2-14>0$ we have
	\begin{align*}
		\sum_{\substack{(k,h) \in \ZZ^2 \backslash (0,0) \\ |k|<\cL X^{1-\gamma_2} \\ |h|<\cL X^{\gamma_2(1-\gamma_1)}}}|\mathfrak{T}_{II,1}|
		\ll X^{\gamma_1\gamma_2-2\varepsilon}.
	\end{align*}
	
	Now we estimate $\mathfrak{T}_{II,2}$.
	Applying the Cauchy inequality we have
	\begin{align*}
		|\mathfrak{T}_{II,2}|^2\leqslant X^{\varepsilon}M\sum_{m\sim M}
		\Bigg|\sum_{n\sim N}b_n\mathbf{e}&\bigg(\left(h(mn+1)^{\gamma_2}-\frac{w}{W}\right)^{\gamma_1}\\
		&+k(mn)^{\gamma_2}+j(mn+1)^{\gamma_2}\bigg)\Bigg|^2.
	\end{align*}
	Let $Q\leqslant N$ be an integer, to be chosen later. The Weyl-van der Corput inequality
   (see Lemma \ref{lem:A}) yields
	\begin{align*}
		|\mathfrak{T}_{II,2}|^2\ll\frac{X^{2+2\varepsilon}}{Q}
		+\frac{X^{1+\varepsilon}}{Q}\sum_{1\leqslant |q|\leqslant Q}
		\sum_{n\sim N}\Bigg|\sum_{m\sim M}\mathbf{e}(f(m))\Bigg|,
	\end{align*}
	where
	\begin{align*}
		f(x)=&h\left((xn+xq+1)^{\gamma_2}-\frac{w}{W}\right)^{\gamma_1}
		+k(xn+xq)^{\gamma_2}+j(xn+xq+1)^{\gamma_2}\\
		&-h\left((xn+1)^{\gamma_2}-\frac{w}{W}\right)^{\gamma_1}
		+k(xn)^{\gamma_2}+j(xn+1)^{\gamma_2}.
	\end{align*}
	For fixed $n\sim N$ and $q$, and for $x\sim M$, it is easy to see that
	\begin{align*}
		f^{(s)}(x)\asymp\Big|&(\gamma_1\gamma_2)(\gamma_1\gamma_2-1)\cdots(\gamma_1\gamma_2-s+1)
		hX^{\gamma_1\gamma_2}\\
		&+\gamma_2(\gamma_2-1)\cdots(\gamma_2-s+1)(j+k)X^{\gamma_2}\Big||q|X^{-1}M^{1-s}.
	\end{align*}
	
	Let
	$$
	\Delta_4=\left|hX^{\gamma_1\gamma_2}
	\left(\frac{m}{M}\right)^{\gamma_1\gamma_2}
	+(k+j)X^{\gamma_2}\left(\frac{m}{M}\right)^{\gamma_2}\right|.
	$$
	The condition $m\in\mathcal{J}_4$ implies that
	$$
   X^{12+\gamma_2-13\gamma_1\gamma_2+24\varepsilon}\ll \Delta_4\ll
   X^{1+\gamma_2-\gamma_1\gamma_2+2\varepsilon}.
   $$
	Applying Lemma \ref{lem: exponent pair method} with $(\kappa,\lambda)=(1/2,1/2)$
	we can see that
	\begin{align*}
		\mathfrak{T}_{II,2}\ll X^{(3-\gamma_1\gamma_2+\gamma_2)/4+\varepsilon}N^{1/4}Q^{1/4}
		+X^{(13\gamma_1\gamma_2-\gamma_2-10)/2-11\varepsilon}N^{1/2}Q^{-1/2}
		+X^{1+\varepsilon}Q^{-1/2}.
	\end{align*}
	This bound is trivial when $Q<1$, so we may restrict that $0<Q\leqslant N$.
	Lemma \ref{lemmea: balance} and $X^{1/3}<N<X^{1/2}$ yield
    \begin{align*}
    \mathfrak{T}_{II,2}\ll
		X^{2\gamma_1\gamma_2-1-3\varepsilon}
       +X^{(11-2\gamma_1\gamma_2+2\gamma_2)/12+\varepsilon}
       +X^{(13\gamma_1\gamma_2-\gamma_2-10)/2-11\varepsilon}
       +X^{5/6+\varepsilon}.
    \end{align*}
    It follows from the inequalities that
    $$
    26\gamma_1\gamma_2-2\gamma_2-23>0\ \ \ \hbox{and}
    \ \ \ 12\gamma_1\gamma_2-11>0
    $$
    we have
	\begin{align*}
		\sum_{\substack{(k,h) \in \ZZ^2 \backslash (0,0) \\ |k|<\cL X^{1-\gamma_2} \\ |h|<\cL X^{\gamma_2(1-\gamma_1)}}}|\mathfrak{T}_{II,2}|
     \ll X^{\gamma_1\gamma_2-2\varepsilon}.
	\end{align*}
Here we also remark that $12\gamma_1\gamma_2>11$ is derived from $15\gamma_1\gamma_2>14$.

   Therefore,
   $$
   \sum_{\substack{(k,h) \in \ZZ^2 \backslash (0,0) \\ |k|<\cL X^{1-\gamma_2} \\ |h|<\cL X^{\gamma_2(1-\gamma_1)}}}|\mathfrak{T}_{II}|
   \ll \sum_{\substack{(k,h) \in \ZZ^2 \backslash (0,0) \\ |k|<\cL X^{1-\gamma_2} \\ |h|<\cL X^{\gamma_2(1-\gamma_1)}}}|\mathfrak{T}_{II,1}|
   +\sum_{\substack{(k,h) \in \ZZ^2 \backslash (0,0) \\ |k|<\cL X^{1-\gamma_2} \\ |h|<\cL X^{\gamma_2(1-\gamma_1)}}}|\mathfrak{T}_{II,2}|
     \ll X^{\gamma_1\gamma_2-2\varepsilon}.
   $$
\end{proof}

We return to the proof. Combining the Vaughan identity (Lemma~\ref{lem:vaughan}), Lemma \ref{lemma: Gamma I}
and Lemma \ref{lemma: Gamma II} we find that
$$
\sum_{\substack{(k,h) \in \ZZ^2 \backslash (0,0) \\ |k|<\cL X^{1-\gamma_2} \\ |h|<\cL X^{\gamma_2(1-\gamma_1)}}}
|\Psi|\ll X^{\gamma_1\gamma_2-2\varepsilon}
$$
provided that $j\leqslant W^{1+2\varepsilon}$.
Recall that $W=X^{1-\gamma_1\gamma_2+\varepsilon}$. For the sum $\cF_{11}$ we have
\begin{align*}
	\sum_{\substack{(k,n) \in \ZZ^2 \backslash (0,0) \\ |k|<\cL X^{1-\gamma_2} \\ |n|<\cL X^{\gamma_2(1-\gamma_1)}}}|\cF_{11}|
	\ll\frac{1}{W}\sum_{j\leqslant \varepsilon W}\Bigg|
	\sum_{w=0}^{W-1}\mathbf{e}\left(\frac{-wj}{W}\right)\Bigg|
	\sum_{\substack{(k,n) \in \ZZ^2 \backslash (0,0) \\ |k|<\cL X^{1-\gamma_2} \\ |n|<\cL X^{\gamma_2(1-\gamma_1)}}}|\Psi|\ll X^{1-\varepsilon}.
\end{align*}
For the sum $\cF_{12}$ we also have
$$
\sum_{\substack{(k,n) \in \ZZ^2 \backslash (0,0) \\ |k|<\cL X^{1-\gamma_2} \\ |n|<\cL X^{\gamma_2(1-\gamma_1)}}}|\cF_{12}|\ll
\sum_{\varepsilon W<j\leqslant W^{1+2/r}}
\left(\frac{W}{j}\right)^{r+1}
X^{\gamma_1\gamma_2-2\varepsilon}\ll X^{1-\varepsilon}.
$$
For the sum $\cF_{13}$ we have
\begin{align*}
	\sum_{\substack{(k,n) \in \ZZ^2 \backslash (0,0) \\ |k|<\cL X^{1-\gamma_2} \\ |n|<\cL X^{\gamma_2(1-\gamma_1)}}}|\cF_{13}|
	\ll \sum_{j>W^{1+2/r}}\left(\frac{W}{j}\right)^{r+1}X^{2-\gamma_1\gamma_2}
	\ll W^{-1}X^{2-\gamma_1\gamma_2}\ll X^{1-\varepsilon}.
\end{align*}
Therefore,
\begin{align}\label{eq: bound for F1}
	\sum_{\substack{(k,n) \in \ZZ^2 \backslash (0,0) \\ |k|<\cL X^{1-\gamma_2} \\ |n|<\cL X^{\gamma_2(1-\gamma_1)}}}|\cF_{1}|
	\ll \sum_{\substack{(k,n) \in \ZZ^2 \backslash (0,0) \\ |k|<\cL X^{1-\gamma_2} \\ |n|<\cL X^{\gamma_2(1-\gamma_1)}}}\left(|\cF_{11}|+|\cF_{12}|+|\cF_{13}|\right)
	\ll X^{1-\varepsilon}.
\end{align}
This proves Proposition \ref{F1}.

\subsection{Proof of Proposition~\ref{F2} and \ref{F3}}

It is clear that
\begin{align}\label{eq: bound for F2}
	\sum_{\substack{(k,h) \in \ZZ^2 \backslash (0,0) \\ |k|<\cL X^{1-\gamma_2} \\ |h|<\cL X^{\gamma_2(1-\gamma_1)}}}|\cF_2|
	\ll \sum_{\substack{(k,h) \in \ZZ^2 \backslash (0,0) \\ |k|<\cL X^{1-\gamma_2} \\ |h|<\cL X^{\gamma_2(1-\gamma_1)}}}\frac{rX\log X}{W}\ll rX^{1-\varepsilon/2}.
\end{align}
Hence we finish the proof of proposition~\ref{F2}.

Now we proof Proposition \ref{F3}. We divide the sum $\cF_3$ into four parts
$$
\cF_3=\cF_{31}+\cF_{32}+\cF_{33}+O\left(rX^{\gamma_1\gamma_2-\varepsilon}\right),
$$
where $\cF_{31}$ is the sum of the terms $j\leqslant \varepsilon W$, $\cF_{32}$
is the sum of terms $\varepsilon W<j\leqslant W^{1+\varepsilon}$, $\cF_{33}$ is the
sum of terms $j>W^{1+\varepsilon}$, and the error term is taken from $j=0$. We start with
$\cF_{33}$. It follows from $r=\varepsilon^{-1}$ that
\begin{align*}
\cF_{33}\ll \frac{r}{W}\sum_{j>W^{1+\varepsilon}}\left(\frac{W}{j}\right)^{r+1}X
\ll r X^{\gamma_1\gamma_2-\varepsilon}.
\end{align*}

It remains to estimate sums of the shape
$$
\sum_{X < m \leqslant aX}\Lambda(m)\mathbf{e}\left(j(m+1)^{\gamma_2}\right)
$$
when $j\leqslant W^{1+\varepsilon}$.
We repeat the standard process (see reference \cite[P.46-52]{GrahamK1991}) to treat it.
If $a_m\ll X^{\varepsilon}$, $b_n=1$ or $b_n=\log n$, and $M<X^{1/3}$,
using the exponent pair $(1/2,1/2)$ we have
\begin{align*}
\mathop{\sum_{m\sim M}\sum_{n\sim N}
}_{X < mn \leqslant aX}a_mb_n\mathbf{e}\left(j(mn+1)^{\gamma_2}\right)
\ll \left(j^{1/2}X^{(3\gamma_2+2)/6}+j^{-1}X^{1-\gamma_2}\right)X^{2\varepsilon}.
\end{align*}
If $a_m\ll X^{\varepsilon}$, $b_n\ll X^{\varepsilon}$, and $X^{1/2}<X<X^{2/3}$,
using the exponent pair $(1/2,1/2)$ we also have
\begin{align*}
\mathop{\sum_{m\sim M}\sum_{n\sim N}
}_{X < mn \leqslant aX}a_mb_n\mathbf{e}\left(j(mn+1)^{\gamma_2}\right)
\ll \left(X^{5/6}+j^{1/6}X^{(2\gamma_2+9)/12}
+j^{-1/2}X^{(2-\gamma_2)/2}\right)X^{2\varepsilon}.
\end{align*}

From the Vaughan identity and the fact that
\begin{align}\label{eq: condition 1}
\left\{\begin{array}{ll}
9\gamma_1\gamma_2-3\gamma_2-5>0, & \\
6\gamma_1\gamma_2-5>0, & \\
14\gamma_1\gamma_2-2\gamma_2-11>0, &
\end{array}\right.
\end{align}
we have
$$
\sum_{X < m \leqslant aX}\Lambda(m)\mathbf{e}\left(j(m+1)^{\gamma_2}\right)
\ll X^{\gamma_1\gamma_2-\varepsilon}
$$
provided that $j\leqslant W^{1+\varepsilon}$. It follows that
\begin{align*}
\cF_{31}\ll \frac{r}{W}\sum_{j\leqslant \varepsilon W}
X^{\gamma_1\gamma_2-\varepsilon}\ll rX^{\gamma_1\gamma_2-\varepsilon}
\end{align*}
and
\begin{align*}
\cF_{32}\ll\frac{r}{W}\sum_{\varepsilon W<j\leqslant W^{1+\varepsilon}}
\left(\frac{W}{j}\right)^{r+1}X^{\gamma_1\gamma_2-\varepsilon}
\ll r^{r+1} X^{\gamma_1\gamma_2-\varepsilon}.
\end{align*}

Therefore, we conclude that
\begin{align}\label{eq: bound for F3}
\sum_{\substack{(k,h) \in \ZZ^2 \backslash (0,0) \\ |k|<\cL X^{1-\gamma_2} \\ |h|<\cL X^{\gamma_2(1-\gamma_1)}}}|\cF_3|
&\ll \sum_{\substack{(k,h) \in \ZZ^2 \backslash (0,0) \\ |k|<\cL X^{1-\gamma_2} \\ |h|<\cL X^{\gamma_2(1-\gamma_1)}}}\left(|\cF_{31}|+|\cF_{32}|+|\cF_{33}|
+rX^{\gamma_1\gamma_2-\varepsilon}\right)\nonumber\\
&\ll X^{1-\varepsilon/2}.
\end{align}
This completes the proof of Proposition \ref{F2} and Proposition \ref{F3}.

\section*{Acknowledgement}
We thank Prof. Yuchen Ding and Prof. Jinjiang Li for valuable conversations. 

This work was supported by the National Natural Science Foundation of China (No. 11901447, No. 12501011),
the Natural Science Foundation of Shaanxi Province (No. 2024JC-YBMS-029, No. 2025JC-YBQN-015),
the Shaanxi Fundamental Science Research Project for Mathematics and Physics (No. 22JSY006),
and the Scientific Research Program Funded by Shaanxi Provincial Education Department (No. 24JK0680).

\end{document}